\newtheorem{thm}{Theorem}[section]
\newtheorem{lemma}[thm]{Lemma}
\newtheorem{corollary}[thm]{Corollary}
\newtheorem{proposition}[thm]{Proposition}
\newtheorem*{thm*}{Theorem}
\theoremstyle{definition}
\newtheorem{remark}[thm]{Remark}
\newtheorem{notation}[thm]{Notation}
\newtheorem{example}[thm]{Example}
\newtheorem{parg}[thm]{}
\newcommand{\ph}{\varphi}
\newcommand{\w}{\widetilde}
\newcommand{\eps}{\varepsilon}
\newcommand{\ma}{\mathcal}
\newcommand{\la}{\longrightarrow}
\newcommand\map{\dasharrow}
\newcommand{\pr}{\mathbb{P}}
\newcommand{\Q}{\mathbb{Q}}
\newcommand{\R}{\mathbb{R}}
\newcommand{\Z}{\mathbb{Z}}
\newcommand{\N}{\mathcal{N}_1}
\newcommand{\Nu}{\mathcal{N}^1}
\newcommand{\Gr}{\operatorname{Gr}}
\newcommand{\Pic}{\operatorname{Pic}}
\newcommand{\NE}{\operatorname{NE}}
\newcommand{\Exc}{\operatorname{Exc}}
\newcommand{\Eff}{\operatorname{Eff}}
\newcommand{\Nef}{\operatorname{Nef}}
\newcommand{\Mov}{\operatorname{Mov}}
\newcommand{\Aut}{\operatorname{Aut}}
\newcommand{\Id}{\operatorname{Id}}
\newcommand{\Bir}{\operatorname{Bir}}
\newcommand{\MCD}{\operatorname{MCD}}
\DeclareMathOperator{\Sec}{Sec}
\DeclareMathOperator{\Jo}{Join}
\DeclareMathOperator{\Cone}{Cone}
\title[On the Fano variety of $(m-1)$-planes in $Q_1\cap Q_2\subset\pr^{2m+2}$]
{On the Fano variety of linear spaces contained in two odd-dimensional quadrics}
\begin{document}

\author[Carolina Araujo]{Carolina Araujo}
\address{\sc Carolina Araujo\\
IMPA\\
Estrada Dona Castorina 110\\
22460-320 Rio de Janeiro\\ Brazil}
\email{caraujo@impa.br}

\author[Cinzia Casagrande]{Cinzia Casagrande}
\address{\sc Cinzia Casagrande\\
Universit\`a di Torino\\
Dipartimento di Matematica\\
via Carlo Alberto 10, 10123 Torino\\ Italy}
\email{cinzia.casagrande@unito.it}

\maketitle

\begin{abstract}
In this paper we describe the geometry of the $2m$-dimensional Fano manifold $G$ parametrizing  $(m-1)$-planes in a smooth
complete intersection $Z$ of two quadric hypersurfaces  in the complex projective space $\pr^{2m+2}$, for $m\geq 1$. 
We show that  there are exactly $2^{2m+2}$ distinct isomorphisms in codimension one between  
$G$ and the blow-up of $\pr^{2m}$ at $2m+3$ general points, parametrized by the $2^{2m+2}$ distinct 
$m$-planes contained in $Z$, and describe these rational maps explicitly.
We also describe the cones of nef, movable and effective divisors of $G$, as well as their dual cones of curves. 
Finally, we determine the automorphism group of $G$. 

These results generalize to arbitrary even dimension the classical description of quartic del Pezzo surfaces ($m=1$).
\end{abstract}

{\small\tableofcontents}


\section{Introduction}
\noindent In this paper we describe the geometry of the $2m$-dimensional Fano manifold $G^{(2m)}$ parametrizing  $(m-1)$-planes in a smooth
complete intersection of two quadric hypersurfaces  in the complex projective space $\pr^{2m+2}$, for $m\geq 1$. 
The case $m=1$ is classical:
\begin{parg} \label{surfaces}
The surface $S=G^{(2)}$ is itself a smooth complete intersection of two quadric hypersurfaces  in $\pr^{4}$, 
and hence a quartic del Pezzo surface.  It is well-known that $\rho(S)=6$, and that 
the cone of effective curves of $S$ is generated by the classes of its $16$ lines.
These $16$ lines have a very special incidence relation: each line intersects properly exactly $5$ lines. 
The del Pezzo surface $S$ can also be described  as the blow-up of $\pr^2$ at $5$ points in general linear position. 
In fact, there are $16$ different ways to realize $S$ as such blow-up: 
For every line $\ell \subset S$, there is a birational morphism $\pi_{\ell}\colon S\to \pr^2$,
unique up to projective transformation of $\pr^2$, contracting the $5$ 
lines incident to $\ell$ to points $p_1^\ell, \dots, p_5^\ell\in \pr^2$ in general linear position. 
The image of $\ell$ under $\pi_{\ell}$ is the unique conic through the $p_i$'s, and the image of the other $10$ lines
are the $10$ lines through $2$ of the  $p_i$'s. Moreover, for any two lines $\ell, \ell' \subset S$,
the sets of points $\{p_1^\ell, \dots, p_5^\ell\}$ and $\{p_1^{\ell'}, \dots, p_5^{\ell'}\}$ are related by a projective transformation of $\pr^2$.

The automorphism group $\Aut(S)$ of $S$ is also well understood (see for instance \cite[\S 8.6.4]{dolgachevbook}). 
In order to describe it, we view $\Pic(S)$ with the intersection product as a unimodular lattice. 
Its primitive sublattice $K_S^{\perp}$ is a $D_5$-lattice. We denote by $W(D_5)$ the Weyl group of automorphisms of this lattice. 
For any  $\zeta\in\Aut(S)$, the induced isomorphism $\zeta^*\colon\Pic(S)\to \Pic(S)$ preserves the intersection product and fixes $K_S$.
This yields an inclusion of groups $\Aut(S)\hookrightarrow W(D_5)\cong (\Z/2\Z)^4\rtimes S_5$, 
whose image contains the normal subgroup  $(\Z/2\Z)^4$.
Moreover, if $S$ is general, then $\Aut(S)\cong (\Z/2\Z)^4$. 
\end{parg}

We will show that the picture described in Paragraph~\ref{surfaces} above generalizes to arbitrary even dimension.
We start by fixing some notation. 
Let $m$ be a positive integer, set $n=2m$, and fix $n+3$ distinct points in $\pr^1$, up to order 
and projective equivalence:
$$
(\lambda_1:1),\dotsc,(\lambda_{n+3}:1)\in \pr^1.
$$
With this fixed data, we introduce the two main characters of this paper, $G^{(n)}$ and $X^{(n)}$:
\begin{parg}[$G^{(n)}$]
Let $Z^{(n)}$ be a smooth complete intersection of the following two quadric hypersurfaces in $\pr^{n+2}$:
$$
Q_1\colon \sum_{i=1}^{n+3}x_i^2=0 \ \text{ and } \ Q_2\colon \sum_{i=1}^{n+3}\lambda_ix_i^2=0.
$$
(Up to projective transformation of $\pr^{n+2}$, any  smooth complete intersection of  two quadric hypersurfaces
can be written in this way - see Section~\ref{prel}.)
Then consider the subvariety $G^{(n)}$ of the Grassmannian $\Gr(m-1,\pr^{n+2})$ parametrizing 
$(m-1)$-planes contained in $Z^{(n)}$. 
It is well known that $G^{(n)}$ is a  smooth $n$-dimensional Fano variety with Picard number $\rho(G^{(n)})=n+4$
(see Section~\ref{Fano} and references therein). 
\end{parg}
\begin{parg}[$X^{(n)}$]
Fix a Veronese embedding $\nu_n\colon \pr^1\hookrightarrow\pr^n$, and set $p_i=\nu_n\big((\lambda_i:1)\big)\in \pr^n$.
The points $p_1, \dots, p_{n+3}$ are in general linear position.
(In fact, this gives a natural correspondence between sets of $n+3$ distinct points in $\pr^1$, up to 
projective equivalence, and  $n+3$ points  in general linear position in $\pr^n$, up to 
projective equivalence.)
Let $X^{(n)}$ be the blow-up of $\pr^n$ at the points $p_1, \dots, p_{n+3}$.
\end{parg}

Our starting point is the following.
\begin{thm}[\cite{bauer,parabolic}]\label{pseudoisom}
The varieties $G^{(n)}$ and $X^{(n)}$ are isomorphic in codimension $1$.
\end{thm}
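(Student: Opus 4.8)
The plan is to reduce the assertion to a birational criterion and then realise an explicit birational map from the geometry of a maximal linear subspace of $Z^{(n)}$. Both $G^{(n)}$ and $X^{(n)}$ are smooth (hence $\Q$-factorial) projective varieties, so a birational map $\ph\colon X^{(n)}\map G^{(n)}$ is an isomorphism in codimension $1$ exactly when neither $\ph$ nor $\ph^{-1}$ contracts a prime divisor; equivalently, when there exist open subsets $U\subset X^{(n)}$ and $V\subset G^{(n)}$ whose complements have codimension $\geq 2$ and which are identified by $\ph$. Thus the whole problem splits into two tasks: exhibit one birational map, and prove that its indeterminacy loci on both sides are small.

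To build $\ph$ I would fix one of the finitely many $m$-planes $M\cong\pr^m$ contained in $Z^{(n)}$ and use it as the line $\ell\subset S$ is used in Paragraph~\ref{surfaces}. When $m=1$ the map is the linear projection of $Z^{(2)}=S$ away from $M=\ell$: a general plane through $\ell$ meets each quadric of the pencil along $\ell$ together with a residual line, and the two residual lines meet in a single point, so the projection is birational onto $\pr^{2}=\pr^{n}$ and contracts precisely the lines meeting $\ell$. For general $m$ the naive projection of $Z^{(n)}$ only reaches $\pr^{m+1}$, so instead I would define $\ph^{-1}\colon G^{(n)}\map\pr^{n}$ by the linear system on $G^{(n)}$ determined by $M$; pinning down this system explicitly, so that it realises the contraction of $n+3$ exceptional divisors over $p_1,\dots,p_{n+3}$, is guided by the rational normal curve through the $p_i$ and is one of the main computations. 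Birationality I would then establish by producing the inverse assignment and checking, by a dimension count together with generic injectivity, that the two maps are mutually inverse on dense open sets.

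The crux, and the step I expect to be hardest, is the codimension estimate. On $X^{(n)}$ the codimension-one exceptional data are the $n+3$ exceptional divisors over the $p_i$, and on $G^{(n)}$ the corresponding loci are the divisors parametrising $(m-1)$-planes that meet $M$, or that lie in one of the $n+3$ singular quadrics of the pencil $\{sQ_1+tQ_2\}$, whose vertices are indexed by $\lambda_1,\dots,\lambda_{n+3}$. I would match these two families through $\ph$ and then show that everything left over — the $(m-1)$-planes in special position with respect to $M$ and to the singular quadrics, together with the degenerate secant loci in $\pr^{n}$ — is cut out by at least two independent conditions and so has codimension $\geq 2$. It is precisely the smoothness of $Z^{(n)}$, equivalently the distinctness of $\lambda_1,\dots,\lambda_{n+3}$, and the general linear position of $p_1,\dots,p_{n+3}$, that force these loci to be small, so this is where the explicit equations $Q_1,Q_2$ and the Veronese correspondence must genuinely be used rather than handled formally. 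Finally, since the construction depends on the chosen $M$ and there are $2^{2m+2}$ of them, it yields $2^{2m+2}$ such maps, matching the count announced in the introduction.
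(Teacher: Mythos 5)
Your proposal does not follow the paper's route, and as it stands it has genuine gaps rather than being an alternative proof. The paper's own proof of Theorem~\ref{pseudoisom} is a short citation argument via moduli of parabolic bundles: by \cite{parabolic}, $G^{(n)}$ is the moduli space of stable rank~$2$ parabolic bundles on $(\pr^1,(\lambda_1:1),\dotsc,(\lambda_{n+3}:1))$ with weights $(\frac12,\dotsc,\frac12)$, by \cite{bauer} $X^{(n)}$ is the analogous moduli space with weights $(\frac1n,\dotsc,\frac1n)$, and variation of the weights changes the moduli space only in codimension~$1$. What you propose instead is to construct the pseudo-isomorphism directly from a fixed $m$-plane $M\subset Z^{(n)}$ via a linear system on $G^{(n)}$. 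That is essentially the content of Theorem~\ref{SQM} --- but in the paper Theorem~\ref{SQM} is \emph{deduced from} Theorem~\ref{pseudoisom} (via Remark~\ref{G=X_Fano}, the computation of $\Nef(G)$, and Proposition~\ref{pullback}), not the other way around. So to make your route work you would have to carry out, unconditionally, exactly the steps you defer: identify the class $H_M=m(-K_G)-(n-1)E_M$, prove $h^0(G,H_M)=n+1$, prove the map $\varphi_{|H_M|}$ is birational, and control its indeterminacy. None of these is done in your sketch ("pinning down this system explicitly\dots is one of the main computations"; birationality "by a dimension count together with generic injectivity"), and for $m\geq 2$ none of them is routine --- there is no projection-type description as in the $m=1$ case of Paragraph~\ref{surfaces}, which is precisely why the paper routes through parabolic bundles.

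Two of your concrete identifications are also off. First, the condition that an $(m-1)$-plane "lie in one of the $n+3$ singular quadrics of the pencil" is vacuous: every linear subspace of $Z^{(n)}=Q_1\cap Q_2$ lies in \emph{every} quadric of the pencil, so this does not single out any divisor on $G^{(n)}$. Second, the divisor of $(m-1)$-planes meeting $M$ (more precisely, the divisorial component $E_M$ of that locus, which by Paragraph~\ref{parg:Eff(G)} is not even equidimensional) is \emph{not} contracted by the map associated to $M$: by Theorem~\ref{SQM} it maps onto $\Sec_{m-1}(C)$, and the contracted divisors are $E_{\sigma_1(M)},\dotsc,E_{\sigma_{n+3}(M)}$, i.e.\ the divisors attached to the neighboring $m$-planes $\sigma_i(M)$. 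Finally, the codimension-$\geq 2$ estimate is asserted but not proved, and it is more delicate than "cut out by two conditions": as Paragraph~\ref{X-->X_Fano} shows, the pseudo-isomorphism $X\map G$ factors through $m-1$ rounds of flips of all the strict transforms of the joins $\Jo\big(\langle p_i\rangle_{i\in I},\Sec_{s-1}(C)\big)$ of dimension $\leq m-1$, matched on the $G$-side with the planes $\sigma_I(M)^*$ (Paragraph~\ref{parg:NE(G)}); any direct proof must exhibit and control this whole stratification, not just the exceptional divisors. In short: the strategy is conceivable but would amount to reproving Theorem~\ref{SQM} from scratch, and the proposal supplies neither the key linear-system computation nor the codimension bounds that such a proof would hinge on.
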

The proof of Theorem~\ref{pseudoisom} makes use of moduli spaces of parabolic vector bundles.
By \cite{parabolic}, $G^{(n)}$  is isomorphic to the moduli space $\ma{M}^{(n)}$ of stable rank $2$ parabolic vector bundles on 
$(\pr^1,(\lambda_1:1),\dotsc,(\lambda_{n+3}:1))$ of degree zero and weights $(\frac{1}{2},\dotsc,\frac{1}{2})$.
On the other hand, by \cite{bauer} (see also \cite[Theorem 12.56]{mukaibook}), 
$X^{(n)}$ is isomorphic to the moduli space of stable rank $2$ parabolic vector bundles on 
$(\pr^1,(\lambda_1:1),\dotsc,(\lambda_{n+3}:1))$ of degree zero and weights $(\frac{1}{n},\dotsc,\frac{1}{n})$,
and this is  isomorphic to $\ma{M}^{(n)}$ in codimension $1$.

This proof, however, does not give much information about the possible 
isomorphisms  in codimension $1$ between $G^{(n)}$ and $X^{(n)}$.
We call an isomorphism  in codimension $1$ a \emph{pseudo-isomorphism}.
In this paper we describe explicitly the birational maps $G^{(n)}\dasharrow \pr^n$ inducing
a pseudo-isomorphism $G^{(n)}\dasharrow X^{(n)}$.
As we shall see, up to  automorphism of $\pr^n$, there are exactly $2^{n+2}$ distinct such birational maps,  
parametrized by the $2^{n+2}$ linear $\pr^m$'s contained in $Z^{(n)}$.  
In order to state this precisely, we need to recall some  facts about $Z^{(n)}$ (see Section~\ref{prel} and references therein).

The set $\ma{F}_{m}(Z^{(n)})$ of  $m$-planes in $Z^{(n)}$ has cardinality   $2^{n+2}$. 
For each $i=1,\dotsc,n+3$, consider the involution $\sigma_i\colon Z^{(n)}\to Z^{(n)}$  switching the sign of the coordinate $x_i$. 
The group generated by these involutions is isomorphic to $(\Z/2\Z)^{n+2}$, and acts on $\ma{F}_{m}(Z^{(n)})$ freely and transitively.
For every subset $I\subseteq\{1,\dotsc,n+3\}$, we set $\sigma_I:=\prod_{i\in I}\sigma_i=\prod_{j\in I^c}\sigma_j$.
For every $M\in\ma{F}_{m}(Z^{(n)})$ and $I\subset\{1,\dotsc,n+3\}$ with $|I|\leq m+1$, we have
$\dim\left(M\cap\sigma_I(M)\right)=m-|I|$.
Consider the incidence variety $\mathcal{I}:=\{([L],p)\in G^{(n)}\times Z^{(n)}\,|\,p\in L\}$
and the associated diagram
$$
\xymatrix{
&{\mathcal{I}}\ar[dl]_{\pi}\ar[dr]^e&\\
G^{(n)}&&Z^{(n)}.
}
$$
We show that for every $m$-plane $M\in\ma{F}_{m}(Z^{(n)})$, $E_M:=\pi_*(e^*(M))$
is the class of a unique prime divisor  on $G^{(n)}$, which we denote by the same symbol (see Proposition \ref{effdivGMov1G}).

Now we can state our main result. See Theorem~\ref{SQM} for more details, including explicit descriptions 
of the linear systems on $G^{(n)}$ defining  the birational maps $G^{(n)}\dasharrow \pr^n$.
\begin{thm}[Theorem~\ref{SQM} and Corollary~\ref{blowupmodel}]\label{main}
With the notation above, let $M\in\ma{F}_{m}(Z^{(n)})$.
Up to a unique permutation of the $p_i$'s,
there is a unique birational map $\rho_M\colon G^{(n)}\dasharrow\pr^n$,  
inducing a pseudo-isomorphism $G^{(n)}\dasharrow X^{(n)}$,
with the following properties:
\begin{enumerate}[$\bullet$]
	\item The image of $E_M$ under $\rho_M$ is $\Sec_{m-1}(C)$,  the $(m-1)$-th secant variety of the unique rational normal 
		curve $C$  through $p_1,\dotsc,p_{n+3}$ in $\pr^n$. 
	\item The map $\rho_M$ contracts $E_{\sigma_i(M)}$ to the point $p_i\in \pr^n$. 
	\item For each $I\subseteq\{1,\dotsc,n+3\}$ of even cardinality $|I|\leq n$,  
		the image of $E_{\sigma_I(M)}$ under $\rho_M$ is the join of  $\langle p_i\rangle_{i\in I}$ and $\Sec_{s-1}(C)$,
		where $s=\frac{n-|I|}{2}$.
\end{enumerate}

Moreover, any pseudo-isomorphism between $G^{(n)}$ and any blow-up $ \widetilde X$  of $\pr^n$ at $n+3$ points is of this form.
In particular, $\widetilde X\cong X^{(n)}$.
\end{thm}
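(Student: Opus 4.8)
To establish the converse, the plan is the following. Let $\psi\colon G^{(n)}\dasharrow\widetilde X$ be a pseudo-isomorphism and $\beta\colon\widetilde X\to\pr^n$ the given blow-up at points $q_1,\dotsc,q_{n+3}$, with exceptional divisors $\widetilde E_1,\dotsc,\widetilde E_{n+3}$ and $H=\beta^*\ma O_{\pr^n}(1)$. I would prove that $\rho:=\beta\circ\psi\colon G^{(n)}\dasharrow\pr^n$ equals some $\rho_M$ up to an automorphism of $\pr^n$ and a relabelling of the points. Since $G^{(n)}$ is Fano, and hence a Mori dream space, the pseudo-isomorphism $\psi$ induces an isomorphism of Néron--Severi spaces $\Nu(G^{(n)})\xrightarrow{\sim}\Nu(\widetilde X)$ which, being an isomorphism in codimension $1$, sends prime divisors to prime divisors, fixes the canonical class, and identifies the effective and movable cones together with their Mori chamber decompositions. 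I write $\psi^{-1}_*$ for the resulting transform of divisor classes from $\widetilde X$ to $G^{(n)}$.

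Each $\widetilde E_i$ is $\beta$-exceptional, hence a rigid prime divisor spanning an extremal ray of $\Eff(\widetilde X)$; therefore $E_{M_i}:=\psi^{-1}_*\widetilde E_i$ is a rigid prime divisor of $G^{(n)}$. By the description of the effective cone of $G^{(n)}$ and of its extremal rigid prime divisors (Proposition~\ref{effdivGMov1G} and the surrounding discussion), these are exactly the $2^{n+2}$ classes $E_M$, $M\in\ma{F}_{m}(Z^{(n)})$. Hence $\rho$ contracts precisely the divisors $E_{M_1},\dotsc,E_{M_{n+3}}$ for distinct $m$-planes $M_1,\dotsc,M_{n+3}$. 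Setting $h:=\psi^{-1}_*H$ and combining $K_{\widetilde X}=-(n+1)H+(n-1)\sum_i\widetilde E_i$ with the fact that $\psi^{-1}_*K_{\widetilde X}=K_{G^{(n)}}$, I obtain
\begin{equation*}
h=\frac{1}{n+1}\Bigl(-K_{G^{(n)}}+(n-1)\sum_{i=1}^{n+3}E_{M_i}\Bigr),
\end{equation*}
so that both $h$ and the linear system defining $\rho$ are determined by the unordered set $\{M_1,\dotsc,M_{n+3}\}$.

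The crux, and the step I expect to be the main obstacle, is to show that this set is forced to be a \emph{star}: after relabelling, $M_i=\sigma_i(M)$ for a single $M\in\ma{F}_{m}(Z^{(n)})$. The constraints are that $h$ lie in $\Mov(G^{(n)})$ and that the contraction attached to the chamber of $h$ be birational onto $\pr^n$, contracting the $E_{M_i}$ to points; equivalently, $\{M_1,\dotsc,M_{n+3}\}$ must single out a chamber of $\Mov(G^{(n)})$ of this type. I would settle this by feeding the incidence relations $\dim\left(M\cap\sigma_I(M)\right)=m-|I|$ into the explicit Mori chamber decomposition of $\Mov(G^{(n)})$ computed earlier: these relations govern which families of $E_M$'s can be contracted simultaneously and so pin down the admissible combinatorial types. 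Using the free and transitive $(\Z/2\Z)^{n+2}$-action on $\ma{F}_{m}(Z^{(n)})$, one then checks that the stars exhaust the admissible $(n+3)$-subsets and that each arises from a unique $M$.

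Granting the star structure $M_i=\sigma_i(M)$, the displayed formula for $h$ is exactly the class cutting out $\rho_M$ in Theorem~\ref{SQM}; hence $\rho=\rho_M$ after absorbing the projective ambiguity into an automorphism of $\pr^n$ and performing the unique permutation sending $\widetilde E_i$ to $E_{\sigma_i(M)}$. Finally, by the first part of the theorem $\rho_M$ already induces a pseudo-isomorphism $G^{(n)}\dasharrow X^{(n)}$, so $\widetilde X$ and $X^{(n)}$ are the small $\Q$-factorial modifications of $G^{(n)}$ attached to one and the same Mori chamber; therefore $\widetilde X\cong X^{(n)}$, and the points $q_1,\dotsc,q_{n+3}$ coincide with $p_1,\dotsc,p_{n+3}$ up to projective equivalence.
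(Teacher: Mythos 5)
Up to the decisive step, your reduction coincides with the paper's proof of Corollary~\ref{blowupmodel}: transporting the exceptional divisors $\w{E}_i$ across the pseudo-isomorphism to rigid prime divisors spanning extremal rays of $\Eff(G^{(n)})$, identifying them with classes $E_{M_1},\dotsc,E_{M_{n+3}}$ via Proposition~\ref{effdivGMov1G}, and computing $h=\psi^{-1}_*H$ from the canonical class are all correct and are exactly how the paper proceeds. The genuine gap is the step you yourself flag as the crux: that $\{M_1,\dotsc,M_{n+3}\}$ must be a star $\{\sigma_1(M),\dotsc,\sigma_{n+3}(M)\}$. What you offer there --- ``feeding the incidence relations $\dim(M\cap\sigma_I(M))=m-|I|$ into the explicit Mori chamber decomposition of $\Mov(G^{(n)})$'' and then ``checking that the stars exhaust the admissible $(n+3)$-subsets'' --- is a restatement of the problem, not an argument: the enumeration of admissible combinatorial types is the entire content of the step, and it is never performed. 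Moreover the proposed mechanism looks at the wrong cone: nothing about the chamber structure of $\Mov(G^{(n)})$ directly constrains which $(n+3)$-tuples of $E_M$'s can be contracted; what does is a \emph{facet} condition on $\Eff(G^{(n)})$.

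The paper closes this gap in two lines, using material already established in Section~\ref{prel}. The classes $\w{E}_1,\dotsc,\w{E}_{n+3}$ are linearly independent, and any effective divisor on $\w{X}$ whose class lies in their span pushes forward to zero on $\pr^n$, hence is supported on the exceptional locus; so they span a simplicial face of $\Eff(\w{X})$ of dimension $n+3=\rho(\w{X})-1$, i.e.\ a simplicial facet. Transporting by $\psi$, the classes $E_{M_1},\dotsc,E_{M_{n+3}}$ span a simplicial facet of $\Eff(G^{(n)})=\beta(\ma{E})$. Remark~\ref{rem:facets}, resting on the facet description of the demihypercube in Paragraph~\ref{demihypercube}, classifies the facets of $\ma{E}$: the $2(n+3)$ facets dual to the rays spanned by $\frac{1}{2}\eta\pm\eps_i$ are cones over $(n+2)$-dimensional demihypercubes with $2^{n+1}$ extremal rays, hence non-simplicial, while the only simplicial facets are the $2^{n+2}$ cones $\Cone\big(\sigma_1(M),\dotsc,\sigma_{n+3}(M)\big)$ dual to the rays spanned by $\delta_M$. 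This forces $M_i=\sigma_i(M)$ for a unique $M$ in one stroke, after which your concluding paragraph (same exceptional divisors, each unique in its linear system, hence $\rho$ and $\rho_M$ agree up to $\Aut(\pr^n)$ and a unique relabelling, so $\w{X}\cong X^{(n)}$) goes through as in the paper. Note finally that your proposal treats only the ``Moreover'' half of the statement: the existence of $\rho_M$ and the bullet-point description of the images of the $E_{\sigma_I(M)}$ are Theorem~\ref{SQM}, proved via Lemma~\ref{h_M} and \eqref{E_I}, and they depend on the nontrivial external input of Theorem~\ref{pseudoisom} (moduli of parabolic bundles); so ``by the first part of the theorem'' must be acknowledged as a citation of that machinery rather than something your argument supplies.
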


As immediate corollaries of Theorem~\ref{main}, we obtain the following.
\begin{corollary}\label{punctual}
Let $\ma{P}_1,\ma{P}_2\subset\pr^{n}$ be subsets of $n+3$ distinct points,  and let $X_{\ma{P}_i}$ be the blow-up of $\pr^{n}$ along $\ma{P}_i$,
$i=1,2$. 
Assume that the points in $\ma{P}_1$ are in general linear position.
Then the following are equivalent:
	\begin{enumerate}[$(i)$]
	\item $X_{\ma{P}_1}\cong X_{\ma{P}_2}$;
	\item $X_{\ma{P}_1}$ and $X_{\ma{P}_2}$ are pseudo-isomorphic;
	\item $\ma{P}_1$ and $\ma{P}_2$ are projectively equivalent (as unordered sets).
	\end{enumerate}
\end{corollary}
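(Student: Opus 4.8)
The plan is to prove the cycle of implications $(iii)\Rightarrow(i)\Rightarrow(ii)\Rightarrow(iii)$; the only step carrying real content is $(ii)\Rightarrow(iii)$, which I would reduce to the last assertion of Theorem~\ref{main}. The two remaining implications are formal. For $(iii)\Rightarrow(i)$, a projective transformation $\phi\in\Aut(\pr^n)$ carrying $\ma{P}_1$ onto $\ma{P}_2$ as unordered sets lifts to an isomorphism $X_{\ma{P}_1}\cong X_{\ma{P}_2}$, because the blow-up of $\pr^n$ along a finite set is determined by that set and is functorial for automorphisms preserving it. For $(i)\Rightarrow(ii)$ there is nothing to prove: an isomorphism is in particular an isomorphism in codimension one, that is, a pseudo-isomorphism.

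For $(ii)\Rightarrow(iii)$, I would first use the genericity of $\ma{P}_1$ to place $X_{\ma{P}_1}$ inside the family studied here. Since the points of $\ma{P}_1$ are in general linear position, the correspondence recalled above provides points $(\lambda_1:1),\dots,(\lambda_{n+3}:1)\in\pr^1$, unique up to projective equivalence, such that $\ma{P}_1$ is projectively equivalent to $\{\nu_n((\lambda_i:1))\}=\{p_1,\dots,p_{n+3}\}$; fixing this data identifies $X_{\ma{P}_1}$ with $X^{(n)}$. By Theorem~\ref{pseudoisom} there is a pseudo-isomorphism $G^{(n)}\dasharrow X_{\ma{P}_1}$, and composing it with the assumed pseudo-isomorphism $X_{\ma{P}_1}\dasharrow X_{\ma{P}_2}$ produces a pseudo-isomorphism $G^{(n)}\dasharrow X_{\ma{P}_2}$, since isomorphisms in codimension one compose. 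Composing once more with the blow-down $X_{\ma{P}_2}\to\pr^n$ gives a birational map $G^{(n)}\dasharrow\pr^n$ inducing a pseudo-isomorphism onto the blow-up $X_{\ma{P}_2}$ of $\pr^n$ at $n+3$ points.

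Now I would invoke the final assertion of Theorem~\ref{main}: this birational map must coincide, up to a relabelling of the points and a projective transformation of the target $\pr^n$, with one of the maps $\rho_M$. By construction $\rho_M$ contracts the divisors $E_{\sigma_i(M)}$ precisely to the Veronese points $p_1,\dots,p_{n+3}$; the same divisors are the ones contracted by our map to the points of $\ma{P}_2$. Comparing the two descriptions, $\ma{P}_2$ is the image of $\{p_1,\dots,p_{n+3}\}$ under a projective transformation of $\pr^n$, hence projectively equivalent to $\{p_i\}$ and therefore to $\ma{P}_1$. This is precisely $(iii)$, closing the cycle.

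I expect the delicate point to be the very last step. Theorem~\ref{main} as stated already yields the abstract isomorphism $X_{\ma{P}_2}\cong X^{(n)}$, but an abstract isomorphism of blow-ups need not respect either blow-down, and so does not by itself force the two point configurations to be projectively equivalent. The resolution is to work not with the isomorphism but with the explicit \emph{form} of the map $\rho_M$ recorded in Theorem~\ref{main}: it is precisely the bookkeeping of which prime divisors $E_{\sigma_i(M)}$ on $G^{(n)}$ get contracted, and to which points of $\pr^n$, that upgrades the abstract isomorphism to the desired projective equivalence of $\ma{P}_1$ and $\ma{P}_2$.
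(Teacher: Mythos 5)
Your proof is correct and takes essentially the same route as the paper, which presents Corollary~\ref{punctual} as an immediate consequence of Theorem~\ref{main}: your reduction of $(ii)\Rightarrow(iii)$ — identify $X_{\ma{P}_1}$ with $X^{(n)}$ via the Veronese correspondence, compose with the pseudo-isomorphism of Theorem~\ref{pseudoisom} and the blow-down to get $G^{(n)}\dasharrow\pr^n$, then apply the last assertion of Theorem~\ref{main} — is exactly the intended unwinding. Your closing remark is also on point: the paper's proof of Corollary~\ref{blowupmodel} shows the composite map agrees with some $\rho_M$ up to a projective transformation of $\pr^n$ (same exceptional divisors, each unique in its linear system), and it is this coincidence of maps, not the abstract isomorphism $\widetilde X\cong X^{(n)}$, that forces $\ma{P}_2$ to be the image of $\{p_1,\dotsc,p_{n+3}\}$ under that transformation.
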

\begin{corollary}\label{moduli}
Let  $\ma{S}_i=\{(\lambda^i_1:1),\dotsc,(\lambda^i_{n+3}:1)\}\subset  \pr^1$, $i=1,2$, be subsets of $n+3$ distinct  points. 
For each $i\in\{1,2\}$, let $Z_{\ma{S}_i}\subset\pr^{n+2}$ be the smooth complete intersection of the two quadrics:
$$
Q_1\colon \sum_{j=1}^{n+3}x_j^2=0 \ \text{ and } \ Q^i_2\colon \sum_{j=1}^{n+3}\lambda^i_jx_j^2=0,
$$
and let $G_{\ma{S}_i}$ be the variety of $(m-1)$-planes contained in $Z_{\ma{S}_i}$. Then the following are equivalent:
	\begin{enumerate}[$(i)$]
	\item $G_{\ma{S}_1}\cong G_{\ma{S}_2}$;
	\item $G_{\ma{S}_1}$ and $G_{\ma{S}_2}$ are pseudo-isomorphic;
	\item $\ma{S}_1$ and $\ma{S}_2$ are projectively equivalent (as unordered sets).
	\end{enumerate}
\end{corollary}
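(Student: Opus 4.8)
The plan is to transport everything to $\pr^n$ through the Veronese correspondence and then reduce to Corollary~\ref{punctual}, using Theorem~\ref{pseudoisom} to pass between $G$ and $X$, and the classical reconstruction of $Z_{\ma{S}}$ from $\ma{S}$ to close the loop. First I would set up the dictionary: for $i\in\{1,2\}$, let $\ma{P}_i=\nu_n(\ma{S}_i)\subset\pr^n$ be the image of $\ma{S}_i$ under the Veronese embedding, so that $\ma{P}_i$ lies on the rational normal curve $C_i=\nu_n(\pr^1)$ and $X_{\ma{P}_i}$ is the blow-up of $\pr^n$ at $\ma{P}_i$. Since any $n+1$ points on a rational normal curve are linearly independent, the sets $\ma{P}_i$ are automatically in general linear position, so Corollary~\ref{punctual} applies. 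I would then record that $\ma{S}_1$ and $\ma{S}_2$ are projectively equivalent on $\pr^1$ if and only if $\ma{P}_1$ and $\ma{P}_2$ are projectively equivalent on $\pr^n$: the forward direction follows by applying the $\mathrm{Sym}^n$-representation to an element of $\Aut(\pr^1)$ carrying $\ma{S}_1$ to $\ma{S}_2$, while for the converse a projective transformation carrying $\ma{P}_1$ to $\ma{P}_2$ must send $C_1$ to a rational normal curve through $\ma{P}_2$, which is $C_2$ by the uniqueness statement in Theorem~\ref{main}, and therefore restricts to an automorphism of $\pr^1$ carrying $\ma{S}_1$ to $\ma{S}_2$.

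With this dictionary in hand the three statements follow from a short cycle of implications. The implication $(i)\Rightarrow(ii)$ is immediate, as an isomorphism is in particular a pseudo-isomorphism. For $(ii)\Rightarrow(iii)$, I would compose the assumed pseudo-isomorphism $G_{\ma{S}_1}\dasharrow G_{\ma{S}_2}$ with the two pseudo-isomorphisms $G_{\ma{S}_i}\dasharrow X_{\ma{P}_i}$ obtained by applying Theorem~\ref{pseudoisom} to the data $\ma{S}_i$; since pseudo-isomorphisms are stable under composition and inversion, this produces a pseudo-isomorphism $X_{\ma{P}_1}\dasharrow X_{\ma{P}_2}$, so $\ma{P}_1$ and $\ma{P}_2$ are projectively equivalent by Corollary~\ref{punctual}, and then $\ma{S}_1$ and $\ma{S}_2$ are projectively equivalent by the dictionary. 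For $(iii)\Rightarrow(i)$, I would use the classical description recalled in Section~\ref{prel}: the isomorphism class of $Z_{\ma{S}}$ depends only on the unordered set $\ma{S}$ up to projective equivalence, so $\ma{S}_1\sim\ma{S}_2$ yields $Z_{\ma{S}_1}\cong Z_{\ma{S}_2}$; because $\ma{O}_Z(1)$ is intrinsic to $Z$ (by Lefschetz for $n\geq 3$, and as the anticanonical polarization for $n=2$), this isomorphism is induced by a linear transformation of $\pr^{n+2}$, which carries $(m-1)$-planes in $Z_{\ma{S}_1}$ to $(m-1)$-planes in $Z_{\ma{S}_2}$ and hence induces $G_{\ma{S}_1}\cong G_{\ma{S}_2}$.

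I expect the main obstacle to be the implication $(iii)\Rightarrow(i)$. The other two implications are formal consequences of Theorem~\ref{pseudoisom} and Corollary~\ref{punctual}, but this one must upgrade a projective equivalence of abstract point data into an honest isomorphism of the Fano varieties, and so relies on two facts outside the pseudo-isomorphism framework: the reconstruction of $Z_{\ma{S}}$ from its pencil of quadrics, and the linearity of every isomorphism $Z_{\ma{S}_1}\cong Z_{\ma{S}_2}$, which is what makes the Fano variety of $(m-1)$-planes functorial. I would also take care to track where the hypothesis that the sets are \emph{unordered} is used: the freedom to permute the $\lambda_j$ corresponds exactly to permuting the degenerate members of the pencil, and this is precisely what keeps the dictionary and the reconstruction compatible.
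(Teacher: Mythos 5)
Your proposal is correct and follows essentially the same route the paper intends: the paper states Corollary~\ref{moduli} as an immediate consequence of Theorem~\ref{main}, and your unpacking — $(i)\Rightarrow(ii)$ trivially, $(ii)\Rightarrow(iii)$ by composing with the pseudo-isomorphisms $G_{\ma{S}_i}\dasharrow X_{\ma{P}_i}$ and invoking Corollary~\ref{punctual} together with the Veronese dictionary (uniqueness of the rational normal curve through $n+3$ points in general position), and $(iii)\Rightarrow(i)$ via the normal form of the pencil of quadrics — is exactly the intended argument. The only superfluous step is your upgrade of an abstract isomorphism $Z_{\ma{S}_1}\cong Z_{\ma{S}_2}$ to a linear one via the intrinsic nature of $\ma{O}_Z(1)$: Section~\ref{prel} already records that acting on the $\lambda_j$'s by permutations and automorphisms of $\pr^1$ yields \emph{projectively} isomorphic varieties $Z\subset\pr^{n+2}$, so the linearity comes for free.
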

\noindent Notice that Corollary \ref{punctual} is a classical result,
originally due to Coble (see \cite{dolgort} and references therein).
See also \cite{BHK} for a result related to Corollary \ref{moduli}, in terms of moduli spaces of rank 2 parabolic vector bundles on $\pr^1$.

To prove Theorem~\ref{main}, we determine the nef cone of $G^{(n)}$ explicitly, and then compare it with the 
Mori chamber decomposition of the effective cone of $X^{(n)}$ described in \cite{mukaiADE}.
This decomposition encodes the nef cones of all varieties pseudo-isomorphic to $X^{(n)}$.
In order to determine the cone of effective curves and the nef cone of $G^{(n)}$, we generalize to arbitrary dimension a construction of Borcea \cite{borcea91} in dimension $n=4$.
We define isomorphisms 
$$
H^{2n-2}(G^{(n)},\Z)\stackrel{\alpha}{\la} H^{n}(Z^{(n)},\Z)\stackrel{\beta}{\la} H^2(G^{(n)},\Z)
$$ 
such that,  for every $M\in\ma{F}_{m}(Z^{(n)})$, $\beta(M)=E_M$  
and $\alpha^{-1}(M)$ is the class of a line on the dual $m$-plane $M^*\subset G^{(n)}$.
These  isomorphisms  are dual with respect to the intersection products, i.e., for every $x \in  H^{2n-2}(G^{(n)},\Z)$  and $y \in H^n(Z^{(n)},\Z)$,
$x \cdot \beta(y)=\alpha(x) \cdot y$.
They allow us to describe explicitly special cones of curves and divisors on $G^{(n)}$:
\begin{thm}[Theorem~\ref{NE(G)} and Proposition~\ref{effdivGMov1G}]\label{cones_intro}
Let $\ma{E}\subset H^n(Z,\R)$ be the polyhedral cone generated by the classes $\{M\}_{M\in\ma{F}_{m}(Z)}$, and denote by 
$\ma{E}^{\vee}\subset H^n(Z,\R)$ its dual cone. 
Then $\ma{E}^{\vee}\subset\ma{E}$, and
the cones of nef and effective divisors of $G^{(n)}$ and their dual cones of effective and moving curves satisfy:
$$
\Nef(G^{(n)}) \ = \ \beta(\ma{E}^{\vee})\ \subset \ \beta(\ma{E})\ = \ \Eff(G^{(n)}), \text { and }
$$
$$
\Mov_1(G^{(n)}) = \ \alpha^{-1}(\ma{E}^{\vee})\ \subset \  \alpha^{-1}(\ma{E})\ = \ \NE(G^{(n)}). 
$$
\end{thm}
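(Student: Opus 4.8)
\emph{Overall strategy.} The displayed identity $x\cdot\beta(y)=\alpha(x)\cdot y$ says exactly that $\beta$ is the adjoint of $\alpha$ for the intersection pairings, the latter being nondegenerate on $H^n(Z,\R)$ by Poincar\'e duality. Consequently, for any cone $\mathcal{C}\subseteq H^n(Z,\R)$ one has
$$
\beta(\mathcal{C}^\vee)=\bigl(\alpha^{-1}(\mathcal{C})\bigr)^\vee
\qquad\text{and}\qquad
\alpha^{-1}(\mathcal{C}^\vee)=\bigl(\beta(\mathcal{C})\bigr)^\vee,
$$
the duals on the right being taken for the pairing between $H^2(G^{(n)})$ and $H^{2n-2}(G^{(n)})$. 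Feeding this into Kleiman's criterion $\Nef(G^{(n)})=\NE(G^{(n)})^\vee$ and the Boucksom--Demailly--P\u{a}un--Peternell duality $\Eff(G^{(n)})=\Mov_1(G^{(n)})^\vee$, the four equalities of the theorem collapse to the two independent statements $\NE(G^{(n)})=\alpha^{-1}(\ma{E})$ and $\Eff(G^{(n)})=\beta(\ma{E})$; the $\Nef$ and $\Mov_1$ equalities then drop out by taking duals. Since $G^{(n)}$ is Fano, hence a Mori dream space, all cones in sight are closed and rational polyhedral, so no subtleties about closures arise.

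\emph{The easy inclusions.} Each $E_M=\beta(M)$ is an effective prime divisor (Proposition~\ref{effdivGMov1G}) and each $\alpha^{-1}(M)$ is the class of a genuine line in the dual plane $M^*\subset G^{(n)}$; hence $\beta(\ma{E})\subseteq\Eff(G^{(n)})$ and $\alpha^{-1}(\ma{E})\subseteq\NE(G^{(n)})$. Dually, any nef class $\beta(y)$ satisfies $\beta(y)\cdot\alpha^{-1}(M)=y\cdot M\geq0$ for every $M\in\ma{F}_{m}(Z)$, so $y\in\ma{E}^\vee$ and $\Nef(G^{(n)})\subseteq\beta(\ma{E}^\vee)$; likewise any movable class $\gamma$ has $\alpha(\gamma)\cdot M=\gamma\cdot E_M\geq0$, whence $\Mov_1(G^{(n)})\subseteq\alpha^{-1}(\ma{E}^\vee)$. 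What remains is to prove the reverse inclusions $\beta(\ma{E}^\vee)\subseteq\Nef(G^{(n)})$ and $\alpha^{-1}(\ma{E}^\vee)\subseteq\Mov_1(G^{(n)})$, which by the adjointness above are dual to, hence equivalent to, $\NE(G^{(n)})\subseteq\alpha^{-1}(\ma{E})$ and $\Eff(G^{(n)})\subseteq\beta(\ma{E})$.

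\emph{The hard inclusions.} The common input is an explicit description of $\ma{E}$ and $\ma{E}^\vee$, obtained by computing the Gram matrix of the generators $\{M\}$ for the intersection pairing on $H^n(Z,\Z)$. Here the free transitive action of $(\Z/2\Z)^{n+2}$ on $\ma{F}_{m}(Z)$ is decisive: it reduces that matrix to the numbers $M\cdot\sigma_I(M)$, which are governed by the incidence data $\dim\!\big(M\cap\sigma_I(M)\big)=m-|I|$ together with intersection theory on $Z$ read off from the universal family $\mathcal{I}$, turning the determination of the facets of $\ma{E}$ and the extremal rays of $\ma{E}^\vee$ into a symmetric computation over $(\Z/2\Z)^{n+2}$. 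To prove $\beta(\ma{E}^\vee)\subseteq\Nef(G^{(n)})$ I would show that each extremal generator $\beta(y)$ is semiample, realizing it as the pullback of an ample class under an extremal contraction of the Fano manifold $G^{(n)}$ and matching that contraction, through the Cone Theorem, to the facet of $\ma{E}$ dual to $y$. Symmetrically, for $\alpha^{-1}(\ma{E}^\vee)\subseteq\Mov_1(G^{(n)})$ I would exhibit, for each extremal $y\in\ma{E}^\vee$, a family of curves covering $G^{(n)}$ whose class is $\alpha^{-1}(y)$, built from pencils of $(m-1)$-planes sweeping out $Z$; note that the lines $\alpha^{-1}(M)$ are themselves \emph{not} movable, since the dual planes $M^*$ fail to cover $G^{(n)}$, in accordance with the strict inclusions $\ma{E}^\vee\subsetneq\ma{E}$.

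\emph{The inclusion $\ma{E}^\vee\subseteq\ma{E}$ and the main obstacle.} Once the cone identifications hold, $\ma{E}^\vee\subseteq\ma{E}$ is automatic: nef classes are pseudoeffective, so $\Nef(G^{(n)})\subseteq\Eff(G^{(n)})$, and transporting this by the isomorphism $\beta^{-1}$ yields $\ma{E}^\vee\subseteq\ma{E}$ (alternatively it can be read off directly from the Gram matrix). The genuine obstacle is the pair of reverse inclusions above: the easy directions only place the distinguished classes inside the correct cones, whereas one must show they \emph{generate}, i.e.\ upgrade ``nonnegative against all the $\alpha^{-1}(M)$'' to ``nef'', and ``nonnegative against all the $E_M$'' to ``movable''. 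This is where positivity must be produced geometrically --- semiampleness of the candidate nef classes, covering families for the candidate movable curves --- and it cannot be decoupled from pushing the symmetric intersection computation over $(\Z/2\Z)^{n+2}$ all the way through, so that the extremal rays one exhibits are certified to exhaust the cones.
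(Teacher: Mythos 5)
Your reduction by duality to the two statements $\NE(G^{(n)})=\alpha^{-1}(\ma{E})$ and $\Eff(G^{(n)})=\beta(\ma{E})$, and your ``easy'' inclusions, agree with the paper, which likewise deduces the $\Nef$ and $\Mov_1$ equalities from the adjointness of $\alpha$ and $\beta$ (Proposition \ref{iso}) and the dualities $\Nef=\NE^{\vee}$, $\Mov_1(G^{(n)})=\Eff(G^{(n)})^{\vee}$. Note, however, a circularity already at that stage: you invoke Proposition \ref{effdivGMov1G} for the effectivity of the classes $E_M$, but that proposition is part of the statement under proof; in the paper the effectivity of $E_M$ is not known a priori and is itself extracted from the pseudo-isomorphism with $X$.

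The genuine gap lies in your hard inclusions, where the plan is circular and omits the paper's key external input. You propose to prove $\beta(\ma{E}^{\vee})\subseteq\Nef(G^{(n)})$ by realizing each extremal generator as the pullback of an ample class under an extremal contraction ``matched through the Cone Theorem'' to the dual facet of $\ma{E}$ --- but the Cone Theorem only supplies contractions for extremal faces of $\NE(G^{(n)})$ already known to exist, and identifying those faces is exactly the content of the theorem. Direct geometric constructions cover only part of the picture: the fibrations $\ph_i,\psi_i$ of Lemma \ref{fibertype} account for the $2(n+3)$ extremal rays $\frac{1}{2}\eta\pm\eps_i$ of $\ma{E}^{\vee}$ and, via Proposition \ref{ell_M}, certify that each $\ell_M$ spans an extremal ray of $\NE(G^{(n)})$; but nothing in your sketch produces the small contractions corresponding to the remaining $2^{n+2}$ generators $\delta_M$, nor rules out additional extremal rays of $\NE(G^{(n)})$, nor exhibits covering families with classes $d_M=\alpha^{-1}(\delta_M)$ (pencils of $(m-1)$-planes do not obviously yield these). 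The paper closes precisely these gaps by importing Theorem \ref{pseudoisom} --- the pseudo-isomorphism $G\dasharrow X$ coming from moduli of parabolic bundles \cite{bauer,parabolic} --- together with the Mori chamber decomposition of $\Eff(X)$ \cite{mukaiADE,CoxCT}: since $G\cong X^n_{\text{\em Fano}}$ (Remark \ref{G=X_Fano}) and $\NE(X^n_{\text{\em Fano}})$ has exactly $2^{n+2}$ extremal rays (Paragraph \ref{X_Fano}), a counting argument forces the $\ell_M$ to exhaust $\NE(G^{(n)})$, giving $\NE(G^{(n)})=\alpha^{-1}(\ma{E})$; and Proposition \ref{pullback} (resting on Lemma \ref{linear} on the symmetry group of $\ma{E}$) normalizes any pseudo-isomorphism so that $\xi^*=h_M$, whence $\Eff(G^{(n)})=\xi^*\Eff(X)=\beta(\ma{E})$ and, in passing, the effectivity and uniqueness of the divisors $E_M$. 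Without this transfer from $X$, your ``symmetric computation over $(\Z/2\Z)^{n+2}$'' establishes only the easy inclusions; note finally that $\ma{E}^{\vee}\subset\ma{E}$ needs no positivity argument at all --- it is the explicit computation of Remark \ref{rem:facets}.
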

We give a geometric description of the extremal rays and facets of these cones, and the associated contractions
in Section~\ref{section:conesG}. 
In Proposition~\ref{movG} and its following paragraph, we also describe the cone $\Mov^1(G^{(n)})$ of movable divisors of $G^{(n)}$,
and give a geometric description of the curves corresponding to its facets.

We end this paper by determining the automorphism group of the Fano variety  $G^{(n)}$, generalizing the 
description of the automorphism group of a quartic del Pezzo surface in Example~\ref{surfaces}.
In what follows, we write $W(D_{n+3})$ for the Weyl group of automorphism of a $D_{n+3}$-lattice,
and we denote by the same symbol the involution of  $G^{(n)}$ induced by the involution $\sigma_i$ of  $Z^{(n)}$.
\begin{proposition}[Proposition~\ref{Aut(G)}]
There is  an inclusion of groups 
$$
\Aut(G^{(n)})\hookrightarrow W(D_{n+3})\cong (\Z/2\Z)^{n+2}\rtimes S_{n+3},
$$
whose image contains the normal subgroup  $(\Z/2\Z)^{n+2}$
generated by the involutions $\sigma_i$'s of  $G^{(n)}$.

Moreover, if the points  $(\lambda_1:1),\dotsc,(\lambda_{n+3}:1)\in \pr^1$ are general, then $\Aut(G^{(n)})\cong (\Z/2\Z)^{n+2}$. 
\end{proposition}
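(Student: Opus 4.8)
The plan is to build the homomorphism $\Aut(G^{(n)})\to W(D_{n+3})$ from the action on cohomology, prove it is injective, and then pin down its image. By Theorem~\ref{pseudoisom} we have $\Pic(G^{(n)})\cong\Pic(X^{(n)})$, and by Theorem~\ref{cones_intro} the isomorphisms $\alpha,\beta$ endow $H^2(G^{(n)},\Z)\cong\Pic(G^{(n)})$ with exactly the data studied by Mukai in \cite{mukaiADE}: a symmetric pairing (transported from the cup product on $H^n(Z^{(n)},\Z)$ via $\beta$ and its $\alpha$-dual), the distinguished class $-K_{G^{(n)}}$, and the effective cone $\Eff(G^{(n)})=\beta(\ma{E})$ with its extremal classes $\{E_M\}_{M\in\ma{F}_m(Z^{(n)})}$. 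Any $\zeta\in\Aut(G^{(n)})$ induces $\zeta^*$ on $H^2(G^{(n)},\Z)$; it preserves this pairing, fixes $-K_{G^{(n)}}$, and permutes the prime divisors $E_M$ of Proposition~\ref{effdivGMov1G}. The isometries of $\Pic(G^{(n)})$ fixing $-K_{G^{(n)}}$ and preserving $\Eff(G^{(n)})$ form precisely $W(D_{n+3})\cong(\Z/2\Z)^{n+2}\rtimes S_{n+3}$ (Mukai \cite{mukaiADE}; cf. \cite{dolgort}), so $\zeta\mapsto\zeta^*$ is the required homomorphism. The main obstacle I expect here is verifying cleanly that $\zeta^*$ respects \emph{exactly} the structure cut out by $W(D_{n+3})$, i.e.\ that the $\alpha,\beta$-transported pairing agrees with Mukai's on $\Pic(X^{(n)})$ and that fixing $-K$ together with preserving the $\{E_M\}$ forces membership in the Weyl group rather than a larger orthogonal group.

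For injectivity, suppose $\zeta^*=\Id$. Then $\zeta$ fixes each prime divisor $E_M$ (each is the unique prime divisor in its class), so for the model $\rho_{M_0}\colon G^{(n)}\dasharrow\pr^n$ of Theorem~\ref{main} the map $\rho_{M_0}\circ\zeta$ is again a pseudo-isomorphism onto $X^{(n)}$ sending $E_{M_0}$ to $\Sec_{m-1}(C)$ and inducing the same isomorphism on Picard groups as $\rho_{M_0}$. By the uniqueness in Theorem~\ref{main}, with trivial induced permutation of the $p_i$, it equals $g\circ\rho_{M_0}$ for some $g\in\Aut(\pr^n)$ fixing each of $p_1,\dots,p_{n+3}$. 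Since $n+3$ points in general linear position contain a projective frame, $g=\Id$, and hence $\zeta=\Id$.

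I would then identify the image of the distinguished subgroup. Each $\sigma_i$ preserves the whole pencil $\{\lambda Q_1+\mu Q_2\}$ member by member, hence fixes its $n+3$ singular quadrics; equivalently $\sigma_i$ induces the trivial permutation of $p_1,\dots,p_{n+3}$, so its image under $\zeta\mapsto\zeta^*$ has trivial $S_{n+3}$-component and lies in the normal subgroup $(\Z/2\Z)^{n+2}=\ker\big(W(D_{n+3})\to S_{n+3}\big)$. By the preliminaries the $\sigma_i$ generate a copy of $(\Z/2\Z)^{n+2}$ in $\Aut(G^{(n)})$; by injectivity its image is a subgroup of order $2^{n+2}$ contained in the order-$2^{n+2}$ group $(\Z/2\Z)^{n+2}$, hence equals it. This gives the inclusion together with the asserted normal subgroup in the image.

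Finally, for general $\lambda_i$ I would show no $\zeta^*$ has nontrivial $S_{n+3}$-part. If $\zeta$ induces the permutation $\tau$, then arguing as above $\rho_{M_0}\circ\zeta^{-1}=g\circ\rho_M$ for some $M\in\ma{F}_m(Z^{(n)})$ and some $g\in\Aut(\pr^n)$ permuting $p_1,\dots,p_{n+3}$ by $\tau$. As the $p_i$ lie on a unique rational normal curve $C$, such $g$ preserves $C$ and restricts to an automorphism of $C\cong\pr^1$ permuting the $\lambda_i$ by $\tau$; but a general configuration of $n+3\geq 4$ points of $\pr^1$ has trivial stabilizer in $\Aut(\pr^1)$, forcing $\tau=\Id$. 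Thus the image is exactly $(\Z/2\Z)^{n+2}$, and by injectivity $\Aut(G^{(n)})\cong(\Z/2\Z)^{n+2}$, completing the proof.
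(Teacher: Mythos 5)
Your proposal is correct and takes essentially the same route as the paper: the same homomorphism $\zeta\mapsto\zeta^*$ into $W(D_{n+3})$ (the paper justifies membership via Lemma~\ref{linear} applied to $\beta^{-1}\circ\zeta^*\circ\beta$, which preserves the cone and fixes $\eta$ --- this also closes your slightly circular appeal to $\zeta^*$ being an isometry of the transported pairing, a fact known only a posteriori; alternatively, your observation that $\zeta^*$ permutes the primitive classes $E_M$ already yields hypothesis $(i)$ of that lemma), the same injectivity argument via the uniqueness of the model $\rho_{M_0}$ and the general linear position of the $p_i$, and the same final step using that $n+3$ general points of $\pr^1$ have trivial stabilizer in $\Aut(\pr^1)$. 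The only cosmetic difference is your geometric detour through the singular quadrics of the pencil to see that $\sigma_i$ has trivial $S_{n+3}$-component, where the paper reads this off directly from \eqref{standardaction} and the $W'$-equivariance of $\beta$.
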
 
 This paper is organized as follows. 
 Section~\ref{prel} is dedicated to smooth complete intersections $Z\subset \pr^{n+2}$, $n=2m$, of two quadric hypersurfaces 
 in even dimensional projective spaces. 
In particular, we investigate the set $\ma{F}_{m}(Z)$ of  $m$-planes in $Z$, and the cone it spans in $H^n(Z,\R)$.
 In Section~\ref{Fano}, we address the Fano variety $G$ of $(m-1)$-planes in $Z$.
 We construct the isomorphisms $H^{2n-2}(G,\Z)\stackrel{\alpha}{\la} H^{n}(Z,\Z)\stackrel{\beta}{\la} H^2(G,\Z)$, and determine some extremal rays of the cone of effective curves of $G$.
 In Section~\ref{blowup}, we consider the blow-up $X$ of $\pr^n$ at $n+3$ points in general linear position. 
 We describe the Mori chamber decomposition of $\Eff(X)$, following  \cite{mukaiADE} and \cite{bauer}.
 From this we can write the nef cone of $G$ in terms of a natural basis for $\Nu(X)$.
In Section~\ref{relation}, we put together the results from the previous sections to prove Theorem~\ref{main}.
In Section~\ref{section:conesG}, we study cones of curves and divisors in $G$, 
giving a geometric description of their facets and extremal rays. 
In Section~\ref{automorphisms}, we describe the automorphism group of the Fano variety $G$. 

 \

\noindent {\bf Notation and conventions.}
We always work over the field ${\mathbb C}$ of complex numbers. 

Given a subvariety $Z\subset \pr^n$ and a non negative integer $d<n$, we denote by 
$\ma{F}_{d}(Z)$ the closed subset  of the Grassmannian $\Gr(d,\pr^{n})$ parametrizing 
$d$-planes contained in $Z$.

\

\noindent {\bf Acknowledgements.}
We thank Ana-Maria Castravet, Alex Massarenti, Elisa Postinghel and the referee for useful comments and discussions.

Carolina Araujo was partially supported by CNPq and Faperj Research Fellowships, and ICTP Simons Associateship.
This work  started during Carolina Araujo's visit to Universit\`a di Torino; the authors are grateful to INdAM (Istituto Nazionale di Alta Matematica) for the support for this visit. 


\section{Smooth complete intersections of two quadrics}\label{prel}
\noindent In this section we describe the geometry of smooth complete intersections of two quadric hypersurfaces in even dimensional complex projective spaces. 
Many of the results are well known and can be found in \cite[Chapter 3]{reidthesis} or \cite[\S 1]{borcea91}, to which we refer for details and proofs. See also the recent paper \cite{dolgachevduncan} for a study of these complete intersections over  a field of characteristic $2$.

Let $n=2m\geq 2$ be an even integer, and let $Z=Q_1\cap Q_2\subset\pr^{n+2}$ be a smooth complete intersection of two quadric hypersurfaces. 
Up to a projective transformation of $\pr^{n+2}$, we can assume that the quadrics have equations:
\stepcounter{thm}
\begin{equation}\label{Q_1&Q_2}
Q_1\colon \sum_{i=1}^{n+3}x_i^2=0,\qquad Q_2\colon \sum_{i=1}^{n+3}\lambda_ix_i^2=0,
\end{equation}
with $\lambda_i\neq\lambda_j$ if $i\neq j$.
Thus $Z$ is determined by $n+3$ distinct points $(\lambda_1:1),\dotsc,(\lambda_{n+3}:1)\in\pr^1$. 
Acting on these points by permutations and projective automorphisms of $\pr^1$ yields projectively isomorphic varieties $Z\subset\pr^{n+2}$.
\begin{parg}[Involutions and double covers]  \label{sigma&pi}
For each $i=1,\dotsc,n+3$, let $\sigma_i\colon Z\to Z$ be the involution switching the sign of the coordinate $x_i$. 
Then $\sigma_1,\dotsc,\sigma_{n+3}$ commute and have the unique relation $\sigma_1\cdots\sigma_{n+3}=\text{Id}_Z$, 
so they generate a subgroup $W'$ of $\Aut(Z)$ isomorphic to $(\Z/2\Z)^{n+2}$. 
For every subset $I\subseteq\{1,\dotsc,n+3\}$, we set $\sigma_I:=\prod_{i\in I}\sigma_i$. Notice that $\sigma_I=\sigma_{I^c}$.

For each $i=1,\dotsc,n+3$, the projection from the $i$th coordinate point in $\pr^{n+2}$ yields a double cover $\pi_i\colon Z\to Q^{n}$, 
where $Q^{n}\subset\pr^{n+1}$ is the smooth quadric having equation $\sum_{j\neq i}(\lambda_j-\lambda_i)x_j^2=0$, 
where $(x_1:\cdots: \hat x_i: \cdots :x_{n+3})$ are projective coordinates
in $\pr^{n+1}$. The involution associated to this double cover is $\sigma_i$.
\end{parg}
\begin{parg}[The set of $m$-planes in $Z$]  \label{F_m(Z)}
Consider the set $\ma{F}_{m}(Z)$ of  $m$-planes in $Z$. It is a finite set  with cardinality   $2^{n+2}$. 
The group $W'$ generated by the involutions $\sigma_i$'s acts on $\ma{F}_{m}(Z)$ freely and transitively.

For every $M\in\ma{F}_{m}(Z)$ and $I\subset\{1,\dotsc,n+3\}$ with $|I|\leq m+1$ we have
  \stepcounter{thm}
\begin{equation}\label{intersection}
\dim\big(M\cap\sigma_I(M)\big)=m-|I|.
\end{equation}
\end{parg}
\begin{parg}  \label{pi(F_m(Z))}
For each $i=1,\dotsc,n+3$, the double cover $\pi_i\colon Z\to Q^{n}$ induces a map 
$$
\mathcal{F}_{m}(Z)\la \ma{F}_{m}(Q^{n}).
$$
Recall that $\ma{F}_{m}(Q^{n})$ has two connected components $T^{\ph}$ and $T^{\psi}$,
and that two $m$-planes $\Lambda,\Lambda' \subset Q^{n}$ belong to the same connected component if and only if 
$\dim(\Lambda\cap\Lambda')\equiv m\mod 2$ (see for instance \cite[Theorem 1.2(b)]{reidthesis} or \cite[Theorem 22.14]{harris}).

Let $M\in\ma{F}_m(Z)$.
We have $\pi_i(\sigma_i(M))=\pi_i(M)$.
On the other hand, if $j\in\{1,\dotsc,n+3\}\smallsetminus \{i\}$, then $M$ and $\sigma_j(M)$ intersect in codimension one by \eqref{intersection}, 
and the same holds for  $\pi_i(M)$ and $\pi_i(\sigma_j(M))$. 
Therefore $\pi_i(M)$ and $\pi_i(\sigma_j(M))$ belong to different connected components of $\ma{F}_{m}(Q^{n})$. 
In general, if $I\subseteq\{1,\dotsc,n+3\}$ does not contain $i$, then $\pi_i(M)$ and $\pi_i(\sigma_I(M))$ belong 
to the same connected component of $\ma{F}_{m}(Q^{n})$ if and only if $|I|$ is even. 
This shows that the image of $\mathcal{F}_{m}(Z)$ in $\ma{F}_{m}(Q^{n})$ consists of $2^{n+1}$ points, half in each connected component.
\end{parg}
\begin{parg}[The cohomomogy group $H^n(Z,\Z)$] \label{H^n(Z)}
  The cohomomogy group $H^n(Z,\Z)$ is isomorphic to $\Z^{n+4}$, and is generated over $\Z$ by the classes of the $m$-planes in $Z$. 
Moreover $H^n(Z,\Z)$ is a unimodular lattice with respect to the intersection form.

For every $M\in \ma{F}_{m}(Z)$ we denote by the same symbol $M$ the corresponding fundamental class in $H^n(Z,\Z)$.
We denote by $\eta\in H^n(Z,\Z)$ the class of a codimension $m$ linear section of $Z\subset\pr^{n+2}$, so that 
$$
\eta^2=4\quad\text{and}\quad\eta\cdot M=1\text{ for every }M\in\ma{F}_{m}(Z).
$$ 
The sublattice $\eta^{\perp}$ (namely the primitive part $H^n(Z,\Z)_0$) is a $D_{n+3}$-lattice. 
We denote by $W(D_{n+3})$ its Weyl group of automorphisms, which is generated by the reflections in the roots of $\eta^{\perp}$. 
It is the full group of automorphisms of the triple $(H^n(Z,\Z),\cdot,\eta)$, 
and it is isomorphic to $(\Z/2\Z)^{n+2}\rtimes S_{n+3}$.
   
The group $W'\cong (\Z/2\Z)^{n+2}$ generated by the involutions $\sigma_i$'s acts naturally and faithfully on $H^n(Z,\Z)$. 
We still denote by $\sigma_I$ the involution of $H^n(Z,\Z)$ induced by $\sigma_I\colon Z\to Z$.
So we view $W'$ as a subgroup  of $W(D_{n+3})$.
It  is a normal subgroup  with quotient $W(D_{n+3})/W'$  isomorphic to the symmetric group $S_{n+3}$.

For every $M\in\ma{F}_{m}(Z)$ and $i,j\in\{1,\dotsc,n+3\}$ with $i\neq j$ we have
  \stepcounter{thm}
\begin{equation}\label{eta}
\eta=M+\sigma_i(M)+\sigma_j(M)+\sigma_{ij}(M).
\end{equation}
\begin{notation}\label{M_0}
Fix $M_{0}\in \ma{F}_{m}(Z)$. 
For  every $i=1,\dotsc,n+3$, we set  $M_i:=\sigma_i(M_0)$.
More generally, 
for every subset $I\subseteq\{1,\dotsc,n+3\}$, we set  $M_I:=\sigma_I(M_0)$. Notice again that $M_I=M_{I^c}$.
We also set
\stepcounter{thm}
\begin{equation}\label{eps}
\eps_i:=M_0+M_i-\frac{1}{2}\eta\in H^n(Z,\R)\quad\text{for every }i=1,\dotsc,n+3.
\end{equation}
Then $\{\eta,\eps_1,\dotsc,\eps_{n+3}\}$ is an orthogonal basis for $H^n(Z,\R)$, which is useful for computations. 
We  have
\stepcounter{thm}
\begin{equation}\label{norme}
\eta^2=4\quad\text{and}\quad \eps_i^2=(-1)^{m}\text{ for every }i=1,\dotsc,n+3.\end{equation}
In particular, the intersection form on $H^n(Z,\R)$ is positive definite when $n\equiv 0\mod 4$, and has signature $(1,n+3)$ when $n\equiv 2\mod4$. 
Notice that this basis depends on the choice of $M_0$. 
\end{notation}
Let $G_0\subset W(D_{n+3})$ be the stabilizer of $M_0$. Then $G_0\cong S_{n+3}$ and $G_0$ acts by (the same) permutations both on 
$\{M_1,\dotsc,M_{n+3}\}$ and on $\{\eps_1,\dotsc,\eps_{n+3}\}$. We have $W(D_{n+3})=W'\rtimes G_0$. 
Moreover, for every $I\subseteq\{1,\dotsc,n+3\}$ of even cardinality, we have
\stepcounter{thm}
\begin{equation}\label{standardaction}
\sigma_I(\eps_i)=\begin{cases}\eps_i&\text{if }i\not\in I,\\-\eps_i&\text{if }i\in I.\end{cases}
\end{equation}
Thus we see the usual action of $W(D_{n+3})$ on the linear span of $\eps_1,\dotsc,\eps_{n+3}$ by permutation and even sign changes of $\eps_1,\dotsc,\eps_{n+3}$ (see for instance \cite[\S 12.1]{humphreys}).

We collect some identities in  $H^n(Z,\R)$ that we will use in later computations. 
\stepcounter{thm}
\begin{gather}
\label{M_I}
M_I=\frac{1}{4}\eta+\frac{(-1)^{|I|}}{2}\left(\sum_{j\not\in I}\eps_j-\sum_{i\in I}\eps_i\right)\ \  \text{for every }I\subseteq\{1,\dotsc,n+3\}\\
\stepcounter{thm}
\label{M_I2}
\begin{split} 
M_I=\frac{1}{n+1}\left(\left(n+2-|I|\right)\left(\frac{1}{2}\eta - \sum_{i\in I}M_i\right)
+(|I|-1)\sum_{j\in I^{^c}}M_j\right)\\\text{ for every }I\subseteq\{1,\dotsc,n+3\}\text{ with even cardinality}
\end{split}\\
\stepcounter{thm}
\label{eps_i}
\eps_i=\frac{1}{2(n+1)}\eta-\frac{1}{n+1}\sum_{j=1}^{n+3}M_j+M_i\quad\text{for every }i=1,\dotsc,n+3.
\end{gather}
 \end{parg}
 
  Our next goal is to describe  the polyhedral cone $\ma{E}$ in $H^n(Z,\R)$ generated by the classes of $m$-planes in $Z$.
As we shall see below, this is a cone over a $(n+3)$-dimensional \emph{demihypercube}. Before we start discussing the cone $\ma{E}$,
we gather some results about demihypercubes.
\begin{parg}[The demihypercube]\label{demihypercube}
Let $N\geq 4$ be an integer. 
Write $(\alpha_1, \dots, \alpha_{N})$ for coordinates in  $\R^{N}$. 
The vertices of  the hypercube $\left[-\frac{1}{2},\frac{1}{2}\right]^{N}\subset \R^{N}$ are the points of the form 
$v_I=\big((v_I)_1, \dots, (v_I)_{N}\big)$, where 
$I\subseteq \{1, \dots, N\}$, $(v_I)_i=\frac{1}{2}$ if $i\in I$, and  $(v_I)_i=-\frac{1}{2}$ otherwise.
The parity of the vertex $v_I$ is the parity of $|I|$.
For each subset $I\subseteq \{1, \dots, N\}$, define the degree $1$ polynomial in the $\alpha_i$'s:
  \stepcounter{thm}
\begin{equation} \label{eq:H_I}
H_I \ := \ \sum_{j\not\in I} \left(\frac{1}{2}+\alpha_j \right)+ \sum_{i\in I}\left(\frac{1}{2}-\alpha_i\right). 
\end{equation} 
Notice that for any two subsets $I,J\subset  \{1, \dots, N\}$, 
  \stepcounter{thm}
\begin{equation} \label{H_I(v_J)}
H_I (v_J)\ = \ \# (I\smallsetminus J) + \# (J\smallsetminus  I) 
\end{equation} 
is the graph distance of $v_I$ and $v_J$ in the skeleton of the hypercube $\left[-\frac{1}{2},\frac{1}{2}\right]^{N}$.

The  \emph{demihypercube} is the polytope $\Delta \subset \left[-\frac{1}{2},\frac{1}{2}\right]^{N}$
generated by the odd vertices of the hypercube. 
The polytope $\Delta$ has $2^{N-1}+2N$ facets (see for instance \cite[Lemma 2.3]{halfcube}).
More precisely, the polytope $\Delta$ is defined in a minimal way by the following set of inequalities: 
  \stepcounter{thm}
\begin{equation} \label{eq:Delta}
\Delta \ = \  \left\{ 
\begin{aligned}
& -\frac{1}{2}\leq \alpha_i \leq \frac{1}{2}, \ & i\in \{1, \dots, N\} \\
&\ H_I\geq 1 , \ & |I| \text{ even.}
\end{aligned}
\right.
\end{equation} 
Notice that the facets of $\Delta$ supported on the hyperplanes $\left( \alpha_i=\pm \frac{1}{2}\right)$ are isomorphic to the
$(N-1)$-dimensional demihypercube. In particular, they are not simplicial. 
On the other hand, the facet supported on the hyperplane $(H_I=1)$, for $|I|$ even, is
the $(N-1)$-dimensional simplex generated by the $N$ vertices of $\left[-\frac{1}{2},\frac{1}{2}\right]^{N}$
at graph distance $1$ to $v_I$. 

The demihypercube can also be described as a weight polytope of the root system of type $D_N$, see \cite[Example 8.5.13]{green}. 
\end{parg}

Now we go back to $H^n(Z,\R)$ and consider the convex rational polyhedral cone 
$$
\ma{E}\ :=\ \Cone(M)_{M\in\ma{F}_{m}(Z)}\ \subset \ H^n(Z,\R).
$$
It is the cone over the  $(n+3)$-dimensional polytope
$$
\ma{E}_0=\text{Conv}( M )_{M\in\ma{F}_{m}(Z)}
$$
obtained  by  intersecting  $\ma{E}$ with the affine hyperplane $\ma{H}:=\{\gamma\,|\,\gamma\cdot\eta=1\}$.
Note that  the Weyl group $W(D_{n+3})$ preserves $\ma{E}$, $\ma{H}$, and $\ma{E}_0$.

We fix $M_0\in\ma{F}_m(Z)$ and consider the orthogonal basis $\{\eta,\eps_1,\dotsc,\eps_{n+3}\}$ for $H^n(Z,\R)$
introduced in \eqref{eps}. Then $\frac{1}{4}\eta\in\ma{H}$ and 
$\{\eps_1,\dotsc,\eps_{n+3}\}$ is a basis for $\eta^{\perp}$, so that $(\frac{1}{4}\eta,\{\eps_1,\dotsc,\eps_{n+3}\})$
 induces  affine coordinates $(\alpha_1, \dots, \alpha_{n+3})$ on the hyperplane $\ma{H}\cong \R^{n+3}$.\label{alpha}
With these coordinates,  $\frac{1}{4}\eta$ is identified with the origin and, by $\eqref{M_I}$, for every $I\subset\{1,\dotsc,n+3\}$ with $|I|$ even, 
$M_I$ is identified with $v_{I^c}$. Thus
the polytope $\ma{E}_0$ is identified with the
demihypercube $\Delta$ described in Paragraph~\ref{demihypercube}, and $\ma{E}$ with the cone over $\Delta$.
\begin{example}[The surface case]
When $n=2$, $Z\subset \pr^4$ is a smooth quartic del Pezzo surface (see Paragraph~\ref{surfaces}). 
The cone $\ma{E}\subset H^2(Z,\R)$, generated by the classes of the $16$ lines in $Z$, is the cone of effective curves of $Z$. 
In this case the polytope $\ma{E}_0$ is a $5$-dimensional demihypercube, and coincides with the $5$-dimensional Gosset polytope 
(see \cite[\S 8.2.5 and 8.2.6]{dolgachevbook}). In higher dimensions, demihypercubes and Gosset polytopes are different polytopes.
\end{example}

Let us explicitly describe the facets of $\ma{E}$, or equivalently the generators of the dual cone $\ma{E}^{\vee}\subset H^n(Z,\R)$.
Let $(y,x_1,\dotsc,x_{n+3})$ be the coordinates  on $H^n(Z,\R)\cong \R^{n+4}$ induced by the basis $\{\eta,\eps_1,\dotsc,\eps_{n+3}\}$.
It follows from \eqref{eq:Delta} that the cone $\ma{E}$  is defined in a minimal way by the following set of inequalities: 
  \stepcounter{thm}
\begin{equation} \label{eq:E}
\ma{E} \ = \  \left\{ 
\begin{aligned}
& 2y + x_i \geq 0, \ & i\in \{1, \dots, n+3\}, \\
& 2y - x_i \geq 0 , \ & i\in \{1, \dots, n+3\}, \\
& 2(n+1) y +   \sum_{j\not\in I} x_j - \sum_{i\in I} x_i \geq 0, \ & I\subset \{1, \dots, n+3\}  \text{ even.}
\end{aligned}
\right.
\end{equation} 
This is equivalent to saying that the dual cone $\ma{E}^{\vee}\subset H^n(Z,\R)$ is the convex polyhedral cone generated by the classes:
  \stepcounter{thm}
\begin{equation} \label{generators}
 \left\{ 
\small{
\begin{aligned}
& \frac{1}{2}\eta+\eps_i\ \text{ and }\ \frac{1}{2}\eta-\eps_i, \ & i\in \{1, \dots, n+3\}, \\
& \frac{n+1}{2}\eta +   (-1)^{m}\sum_{j\not\in I} \eps_j - (-1)^{m}\sum_{i\in I} \eps_i , \ & I\subset \{1, \dots, n+3\},  \text{ $|I|$ even.}
\end{aligned}
}
\right.
\end{equation} 
\begin{remark}\label{rem:facets}
Using \eqref{eta}, \eqref{eps} and \eqref{M_I}, we can write the generators \eqref{generators} of $\ma{E}^{\vee}$ in terms of $\eta$ and the $M_I$'s:
$$
 \left\{ 
\small{
\begin{aligned}
& \frac{1}{2}\eta+\eps_i = M_0 +M_i,  \\
& \frac{1}{2}\eta-\eps_i = M_j+M_{ij} \ \text{ for any } \ j\neq i, \\
& \frac{n+1}{2}\eta +   (-1)^{m}\sum_{j\not\in I} \eps_j - (-1)^{m}\sum_{i\in I} \eps_i =  2\left(\left\lfloor\frac{m+1}{2}\right\rfloor\eta+(-1)^{m}M_I\right).
\end{aligned}
}
\right.
$$
Note in particular that $\ma{E}^{\vee}\subset\ma{E}$.

For $I\subseteq\{1,\dotsc,n+3\}\smallsetminus \{i\}$, it follows from \eqref{norme} and \eqref{M_I} that:
\stepcounter{thm}
\begin{equation}\label{formula}
\begin{split}
\left(\frac{1}{2}\eta+\eps_i\right)\cdot M_I=&\begin{cases} 1\text{ if }
|I|\equiv m\mod 2,\\
0 \text{ otherwise.}
\end{cases}\\
\left(\frac{1}{2}\eta-\eps_i\right)\cdot M_I=&\begin{cases} 0\text{ if }
|I|\equiv m\mod 2,\\
1 \text{ otherwise.}
\end{cases}
\end{split}
\end{equation}
This describes the generators of the (non-simplicial) facets of $\ma{E}$, corresponding to the extremal rays of $\ma{E}^{\vee}$ generated by $\frac{1}{2}\eta\pm\eps_i$.

For each $M\in\ma{F}_{m}(Z)$, set
$$
\delta_M\ := \ \left\lfloor\frac{m+1}{2}\right\rfloor\eta+(-1)^{m}M.
$$
The  facet of the cone $\ma{E}$ corresponding to the extremal ray 
 of  $\ma{E}^{\vee}$ generated by $\delta_M$ is simplicial, and given by:
$$
\Cone \big(\sigma_i(M)\big)_{i\in \{1, \dots, n+3\}}.
$$
Indeed,  for  
$I\subseteq \{1, \dots, n+3\}$ with $|I|$ odd, one computes using \eqref{M_I}:
$$
\delta_{M}\cdot \sigma_I(M) \ = \  \frac{1}{2}\Big(|I|-1 \Big).
$$ 
\end{remark}

Let $(z,t_1,\dotsc,t_{n+3})$ be the coordinates  on $H^n(Z,\R)$ induced by the basis 
$\big\{\eta,M_1,\dotsc,M_{n+3}\big\}$. In the sequel we need equations for $\ma{E}^{\vee}$ in these coordinates.
Let $I\subseteq\{1,\dotsc,n+3\}$ be such that $|I|\equiv m\mod 2$. 
Using \eqref{M_I},  one computes:
$$
(z\eta+\sum_{i=1}^{n+3}t_iM_i)\cdot M_I \ = \ 
2z+\left(|I|-m\right)\sum_{i=1}^{n+3}t_i-2\sum_{i\in I}t_i.
$$
So we get the following. 
\begin{lemma}\label{dualconeE}
An element $z\eta+\sum_{i=1}^{n+3}t_iM_i$ is in $\ma{E}^{\vee}$ if and only if 
\stepcounter{thm}
\begin{equation}\label{ineqG} 
2z+\left(|I|-m\right)\sum_{i=1}^{n+3}t_i-2\sum_{i\in I}t_i \ \geq \ 0 
\end{equation}
for every  $I\subseteq\{1,\dotsc,n+3\}$ such that $|I|\equiv m\mod 2$.
\end{lemma}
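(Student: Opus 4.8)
The plan is to unwind the definition of the dual cone and then reduce the defining inequalities to those indexed by subsets of a single parity, where the intersection computation recorded just before the statement applies.

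First, since $\ma{E}=\Cone(M)_{M\in\ma{F}_m(Z)}$ is generated by the classes of the $m$-planes of $Z$, a class $w\in H^n(Z,\R)$ lies in $\ma{E}^{\vee}$ if and only if $w\cdot M\geq 0$ for every $M\in\ma{F}_m(Z)$: it suffices to test $w$ against the finitely many generators. Writing $w=z\eta+\sum_{i=1}^{n+3}t_iM_i$ in the basis $\{\eta,M_1,\dotsc,M_{n+3}\}$, the lemma amounts to translating these $2^{n+2}$ inequalities into the coordinates $(z,t_1,\dotsc,t_{n+3})$.

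The key step is to pick, for each $m$-plane, a canonical subset indexing it. Every $M\in\ma{F}_m(Z)$ equals $M_I=\sigma_I(M_0)$ for some $I\subseteq\{1,\dotsc,n+3\}$, and $M_I=M_{I^c}$. Since $W'$ acts freely on $\ma{F}_m(Z)$ (Paragraph~\ref{F_m(Z)}), the equality $M_I=M_J$ forces $\sigma_{I\triangle J}=\Id_Z$, hence $J\in\{I,I^c\}$; thus $I$ and $I^c$ are the only subsets representing a given $m$-plane. As $n+3=2m+3$ is odd, the cardinalities $|I|$ and $|I^c|=(2m+3)-|I|$ have opposite parities, so exactly one of them is $\equiv m\pmod 2$. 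Hence $I\mapsto M_I$ restricts to a bijection between $\{I:\,|I|\equiv m\ (\mathrm{mod}\ 2)\}$ and $\ma{F}_m(Z)$ (both sets having $2^{n+2}$ elements), and the family of inequalities $w\cdot M\geq 0$, $M\in\ma{F}_m(Z)$, coincides with the family $w\cdot M_I\geq 0$ taken over all $I$ with $|I|\equiv m\pmod 2$.

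Finally, I would invoke the intersection computation recorded immediately before the statement, namely $(z\eta+\sum_it_iM_i)\cdot M_I=2z+(|I|-m)\sum_it_i-2\sum_{i\in I}t_i$ for $|I|\equiv m\pmod 2$, which follows from \eqref{M_I} together with $\eta\cdot M_I=1$ and the norms \eqref{norme}. This expression is a positive multiple of the intersection number $w\cdot M_I$, so its nonnegativity is exactly \eqref{ineqG}; combined with the previous paragraph this yields the stated equivalence. I do not expect a genuine obstacle here: the whole content is the parity reduction $M_I=M_{I^c}$ together with $n+3$ being odd, after which the lemma drops out of the preceding computation. The only point requiring care is to verify that this computation is valid precisely for the parity $|I|\equiv m\pmod 2$ that the reduction selects.
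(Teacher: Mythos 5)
Your proof is correct and follows essentially the same route as the paper, which simply records the intersection computation $(z\eta+\sum_i t_iM_i)\cdot M_I$ for $|I|\equiv m\pmod 2$ and deduces the lemma; you merely make explicit the two points the paper leaves implicit, namely that testing against the generators $\{M\}_{M\in\ma{F}_m(Z)}$ suffices, and that since $n+3$ is odd and $M_I=M_{I^c}$, the subsets with $|I|\equiv m\pmod 2$ enumerate $\ma{F}_m(Z)$ exactly once. Your cautious phrasing that the displayed quantity is a positive multiple of $w\cdot M_I$ is in fact accurate (it is twice it, since $\eta\cdot M_I=1$ contributes $z$ rather than $2z$), and this harmless scaling does not affect the equivalence.
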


We conclude this section with the following elementary description of the symmetry group of the cone $\ma{E}$.
\begin{lemma}\label{linear}
Let $f\colon H^n(Z,\R)\to H^n(Z,\R)$ be a linear map. The following are equivalent: 
\begin{enumerate}[$(i)$]
\item $f(\ma{E})=\ma{E}$ and $f(x)\cdot\eta=x\cdot\eta$ for every $x\in H^n(Z,\R)$.
\item $f(\ma{E}^{\vee})=\ma{E}^{\vee}$ and $f(\eta)=\eta$.
\item $f\in W(D_{n+3})$.
\end{enumerate}
\end{lemma}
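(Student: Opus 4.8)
The plan is to prove the three-way equivalence by establishing the cycle $(iii)\Rightarrow(i)\Rightarrow(ii)\Rightarrow(iii)$, exploiting the explicit descriptions of $\ma{E}$, $\ma{E}^\vee$ and $\eta$ developed earlier in this section. The implication $(iii)\Rightarrow(i)$ is essentially already available: by Paragraph~\ref{H^n(Z)}, the cone $\ma{E}$ is generated by the classes $\{M\}_{M\in\ma{F}_m(Z)}$, and $W(D_{n+3})$ permutes $\ma{F}_m(Z)$ (acting on $H^n(Z,\Z)$ and fixing $\eta$, since $\eta$ is the unique, up to scale, $W(D_{n+3})$-invariant class as the orthogonal complement of the root lattice $\eta^\perp$). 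Hence any $f\in W(D_{n+3})$ sends generators of $\ma{E}$ to generators of $\ma{E}$, giving $f(\ma{E})=\ma{E}$, and fixes $\eta$, which forces $f(x)\cdot\eta=f(x)\cdot f(\eta)=x\cdot\eta$ because elements of the Weyl group preserve the intersection form.

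For $(i)\Rightarrow(ii)$, I would argue that the dual cone is determined functorially by $\ma{E}$ together with the intersection pairing, and that the hypotheses in $(i)$ are exactly what is needed to transport this structure. If $f(\ma{E})=\ma{E}$ and $f$ preserves the pairing against $\eta$, the subtlety is that $f$ need not a priori preserve the full intersection form, so I cannot immediately conclude $f(\ma{E}^\vee)=\ma{E}^\vee$ by naive duality. The cleanest route is first to upgrade $(i)$ to the statement that $f$ preserves the intersection form on all of $H^n(Z,\R)$: the condition $f(\ma{E})=\ma{E}$ pins down $f$ up to a finite symmetry group, and combined with $\eta$-invariance one checks $f$ must be an isometry. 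Concretely, I would show that $f(\eta)=\eta$ (the apex direction of the cone $\ma{E}$ is intrinsic, being the barycenter direction fixed by the symmetry, or using that $\eta$ spans the one-dimensional invariant subspace), and then that $f$ acting on the vertices of the demihypercube $\ma{E}_0=\Delta$ realizes a symmetry of $\Delta$. Once $f$ is known to be an isometry fixing $\eta$, duality is automatic: $f(\ma{E}^\vee)=(f(\ma{E}))^\vee=\ma{E}^\vee$, using $\big(f(\ma{E})\big)^\vee = f(\ma{E}^\vee)$ for an isometry.

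The main obstacle, and the crux of the whole lemma, is identifying the combinatorial symmetry group of the cone $\ma{E}$, equivalently of the demihypercube $\Delta$, with $W(D_{n+3})$. This is where I expect to spend the real effort. Having identified $\ma{E}_0$ with $\Delta\subset\R^{n+3}$ via the coordinates $(\alpha_1,\dots,\alpha_{n+3})$ from Paragraph~\ref{alpha}, and recalling from Paragraph~\ref{demihypercube} that $\Delta$ is the weight polytope of the root system $D_{n+3}$, the symmetry group of $\Delta$ is the full automorphism group of that weight polytope. The linear symmetries of the $D_N$ weight polytope are generated by coordinate permutations and even sign changes, which is precisely $W(D_N)\cong(\Z/2\Z)^{N-1}\rtimes S_N$ (with $N=n+3$), provided $N\ge 4$ so that $\Delta$ is not the cross-polytope or simplex with extra symmetries --- and this is guaranteed here since $n=2m\ge 2$ gives $n+3\ge 5$. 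I would invoke the standard classification of weight-polytope symmetries (as cited via \cite{green}) rather than re-deriving it, matching the generating permutations and even sign changes against the action of $S_{n+3}$ and $W'$ recorded in \eqref{standardaction} and the decomposition $W(D_{n+3})=W'\rtimes G_0$.

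For the remaining implication $(ii)\Rightarrow(iii)$, I would run the dual of the above argument: the condition $f(\ma{E}^\vee)=\ma{E}^\vee$ together with $f(\eta)=\eta$ forces $f$ to permute the extremal rays of $\ma{E}^\vee$ described in \eqref{generators} and Remark~\ref{rem:facets}, and in particular to permute the distinguished generators $\frac{1}{2}\eta\pm\eps_i$ and $\delta_M=\big\lfloor\frac{m+1}{2}\big\rfloor\eta+(-1)^m M$. Since these generators encode the same combinatorial data as the facets of $\ma{E}$ --- by the explicit pairings in \eqref{formula} and the identification of $\delta_M$ with the simplicial facet $\Cone(\sigma_i(M))_i$ --- any such $f$ induces a symmetry of the demihypercube fixing $\eta$, hence lies in $W(D_{n+3})$ by the weight-polytope identification again. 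This closes the cycle and completes the proof.
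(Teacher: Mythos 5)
Your implication $(iii)\Rightarrow(i)$ and the reduction of $(i)$ to a symmetry-group statement for the demihypercube are sound, and your barycenter observation (the vertices of $\ma{E}_0$ average to $\frac{1}{4}\eta$, so $f(\eta)=\eta$ follows from the hypotheses of $(i)$) is a correct way to handle the fact that $(i)$ only assumes invariance of the functional $x\mapsto x\cdot\eta$, not of $\eta$ itself. But your route through the crux is genuinely different from the paper's, and it is worth noting what each buys. You outsource the identification of the linear symmetry group of $\Delta$ with $W(D_{n+3})$ to the classical polytope literature; that statement is true for $N=n+3\geq 5$ (and you correctly flag that $N=4$, the $16$-cell, would fail), but be aware that Lemma~\ref{linear} \emph{is} the paper's self-contained substitute for exactly that classification, and \cite{green} is cited in the paper only for the weight-polytope identification, not for its symmetry group — so you would need a reference that actually proves the classification (e.g.\ Coxeter), or your appeal risks assuming the lemma. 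The paper instead proves $(i)\Rightarrow(iii)$ directly in a few lines: $f$ maps the simplicial facet $\Cone(M_1,\dotsc,M_{n+3})$ to another simplicial facet, which by Remark~\ref{rem:facets} is $\sigma_I\big(\Cone(M_1,\dotsc,M_{n+3})\big)$; composing with $\sigma_I$ and then with the element $\omega\in G_0$ realizing the induced permutation reduces to $f$ fixing each $M_i$; finally $f$ fixes $v=\sum_{M\in\ma{F}_m(Z)\smallsetminus\{M_1,\dotsc,M_{n+3}\}}M$, which lies off the span of the $M_i$'s since $\delta_{M_0}\cdot v>0$, forcing $f=\Id$. This avoids any external input and is shorter than verifying a citation.

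The genuine gap is in your $(ii)\Rightarrow(iii)$. A linear map $f$ with $f(\ma{E}^{\vee})=\ma{E}^{\vee}$ permutes the extremal rays of $\ma{E}^{\vee}$, but this does \emph{not} directly ``induce a symmetry of the demihypercube'': the correspondence between rays of $\ma{E}^{\vee}$ and facets of $\ma{E}$ is mediated by the pairing, and $f$ carries $r^{\perp}\cap\ma{E}$ to $f(r)^{\perp}\cap\ma{E}$ only if $f$ is orthogonal — which is precisely what you are trying to establish. At best you get a combinatorial automorphism of the face lattice, and converting that into $f\in W(D_{n+3})$ requires further work (e.g.\ showing $f$ does not mix the two classes of generators, and pinning the scalings to $1$ using relations such as $\big(\frac{1}{2}\eta+\eps_i\big)+\big(\frac{1}{2}\eta-\eps_i\big)=\eta$ together with $f(\eta)=\eta$, then killing $w^{-1}\circ f$ for a suitable $w\in W(D_{n+3})$). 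All of this is repairable, but the paper dispatches the step with a two-line transpose trick that you should adopt: set $g:=f^{t}$ with respect to the intersection form; then $g(x)\cdot\eta=x\cdot f(\eta)=x\cdot\eta$ for all $x$, and $g(\ma{E})=\ma{E}$ because $\ma{E}=\ma{E}^{\vee\vee}$ and both $f$ and $f^{-1}$ preserve $\ma{E}^{\vee}$; hence $g$ satisfies $(i)$, so $g\in W(D_{n+3})$, so $g$ is orthogonal and $f=g^{t}=g^{-1}\in W(D_{n+3})$.
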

\begin{proof}
The implications $(iii)\Rightarrow (i)$ and $(iii)\Rightarrow (ii)$ are clear.

\medskip

We prove the implication $(i)\Rightarrow (iii)$. Let $f$ be an endomorphism of $H^n(Z,\R)$ satisfying $(i)$.
Then $f$ permutes the vertices of $\ma{E}_0$, and hence  $f(\ma{F}_m(Z))=\ma{F}_m(Z)$.

We fix $M_0\in\ma{F}_m(Z)$ and follow the notation introduced in \ref{M_0}. 
By Remark~\ref{rem:facets},  $\delta_{M_0}=\lfloor\frac{m+1}{2}\rfloor\eta+(-1)^{m}M_0$ generates an extremal ray of $\ma{E}^{\vee}$, 
and the corresponding  facet  of $\ma{E}$ is simplicial given by 
$$
\Cone\big( M_1,\dotsc, M_{n+3}\big ). 
$$
Then $f\big(\Cone\big( M_1,\dotsc, M_{n+3}\big )\big)$ must be another simplicial facet of $\ma{E}$, of the form 
$$
\Cone\big( \sigma_1(M_I),\dotsc, \sigma_{n+3}(M_I)\big )=\sigma_I\big(\Cone\big( M_1,\dotsc, M_{n+3}\big)\big) 
$$ 
for some $I\subseteq\{1,\dotsc,n+3\}$. 
By composing $f$ with the involution $\sigma_I\in  W(D_{n+3})$, we may assume that $f$ fixes the facet $\Cone\big( M_1,\dotsc, M_{n+3}\big )$ of $\ma{E}$.
In particular, $f$ induces a permutation on the set $\big\{ M_1,\dotsc, M_{n+3}\big \}$.
Let $\omega \in  W(D_{n+3})$ be the element in the stabilizer of $M_0$ inducing the same permutation as $f$ 
on the set $\big\{ M_1,\dotsc, M_{n+3}\big \}$.
Then, by composing $f$ with $\omega^{-1}$, we may assume that $f$ fixes each of $M_1,\dots ,M_{n+3}$.

We also have $f(\ma{F}_{m}(Z)\smallsetminus\{M_1,\dotsc,M_{n+3}\})=\ma{F}_{m}(Z)\smallsetminus\{M_1,\dotsc,M_{n+3}\}$, 
therefore $f$ must fix the point
 $$
v:=\sum_{M\in\ma{F}_{m}(Z)\smallsetminus\{M_1,\dotsc,M_{n+3}\}}M.
$$ 
Since $\delta_{M_0}\cdot v>0$,  $v$ is not contained in the linear span of
$M_1,\dotsc,M_{n+3}$ (see Remark~\ref{rem:facets}). This
 implies that $f=\Id_{H^n(Z,\R)}\in W(D_{n+3})$.

\medskip

Finally we prove the implication $(ii)\Rightarrow (iii)$. 
Let $f$ be an endomorphism of $H^n(Z,\R)$ satisfying $(ii)$. 
Then the dual map $g:=f^t\colon H^n(Z,\R)\to H^n(Z,\R)$ satisfies $(i)$, hence by what precedes $g\in W(D_{n+3})$. 
In particular $g$ is orthogonal, and $f=g^t=g^{-1}\in W(D_{n+3})$.
\end{proof}


\section{The Fano variety $G$ of $(m-1)$-planes in $Q_1\cap Q_2\subset\pr^{2m+2}$}\label{Fano}
\noindent Let $n=2m\geq 2$ be an even integer, and let $Z=Q_1\cap Q_2\subset\pr^{n+2}$ be a smooth complete intersection of two quadric hypersurfaces
as in \eqref{Q_1&Q_2}.
In this section we consider the variety $G$ of $(m-1)$-planes in $Z$:
$$
G\ := \ \ma{F}_{m-1}(Z)=\left\{[L]\in\Gr(m-1,\pr^{n+2})\,|\,L\subset Z\right\}.
$$
This is a  smooth $n$-dimensional Fano variety that has been much studied. 
In particular, it is known that 
$\Pic(G)\cong H^2(G,\Z)\cong\Z^{n+4}$, $\Nu(G)\cong H^2(G,\R)$, and $-K_G$ is the restriction of $\ma{O}(1)$ on $\Gr(m-1,\pr^{n+2})$
(see  \cite[Theorem 2.6]{reidthesis}, \cite[Theorem 4.1 and Remark 4.3]{borcea90} and \cite[Proposition 3.2]{jiang}). Moreover $G$ is rational, hence $H^{2n-2}(G,\Z)$ is torsion-free \cite[Proposition 1]{artinmumford} and generated by fundamental classes of one-cycles \cite[Lemma 1]{soulevoisin}. Thus we also have $H^{2n-2}(G,\Z)\cong\Z^{n+4}$ and $\N(G)\cong H^{2n-2}(G,\R)$.

For each $M\in\ma{F}_{m}(Z)$ we set
\stepcounter{thm}
 \begin{equation}\label{M*}
 M^*:=\{[L]\in G\,|\, L\subset M\}.
 \end{equation}
It is an $m$-plane in $G$ (under the Pl\"{u}cker embedding). 
Let $\ell_M\in H^{2n-2}(G,\Z)$ be the class of a line in $M^*$.
By \eqref{intersection}, for every $M,M'\in\ma{F}_m(Z)$ we have:
$$
M^*\cap(M')^*\neq \emptyset\quad \Longleftrightarrow\quad M'=\sigma_i(M)\ \text{ for some }i=1,\dotsc,n+3,
$$
and $M^*\cap\sigma_i(M)^*$ is the point $[M\cap\sigma_i(M)]\in G$.
\begin{parg}[The fibrations $\ph_i$ and $\psi_i$ on $G$]
We define $2(n+3)$ fibrations on $G$, generalizing a construction by Borcea in the case $n=4$  \cite[\S 3]{borcea91}.
For each $i=1,\dotsc,n+3$, the double cover $\pi_i\colon Z\to Q^{n}$ introduced  in Paragraph~\ref{sigma&pi}  induces a map 
$$
\Pi_i\colon G\la \ma{F}_{m-1}(Q^{n}).
$$
Each $(m-1)$-plane  in $Q^{n}$ is contained in exactly one $m$-plane of each of the two families $T^{\ph}$ and $T^{\psi}$
of $m$-planes in $Q^{n}$ (see for instance  \cite[Theorem 22.14]{harris}). This yields two morphisms 
$$
\ma{F}_{m-1}(Q^{n})\to T^{\ph}\subset\Gr(m,\pr^{n+1})\text{ and }\ma{F}_{m-1}(Q^{n})\to T^{\psi}\subset\Gr(m,\pr^{n+1}).
$$ 
By composing them with $\Pi_i\colon G\to\ma{F}_{m-1}(Q^n)$, we get  two distinct morphisms
$$
\bar\ph_i,\bar\psi_i\colon G\la \Gr(m,\pr^{n+1}),
$$
such that $\bar\ph_i(G)\subseteq T^{\ph}$ and $\bar\psi_i(G)\subseteq T^{\psi}$.
Let 
$$
G\stackrel{\ph_i}{\la} Y_{\ph_i}\la \bar\ph_i(G) \quad \text { and } \quad G\stackrel{\psi_i}{\la} Y_{\psi_i}\la \bar\psi_i(G)
$$
be the Stein factorizations of $\bar\ph_i$ and $\bar\psi_i$, respectively. 
\begin{lemma}\label{fibertype}
The morphism $\ph_i\colon G\to Y_{\ph_i}$ has general fiber $\pr^1$, and has exactly $2^n$ singular fibers, each isomorphic to a union 
of two $\pr^m$'s meeting transversally at one point. More precisely, the singular fibers of $\ph_i$ are of the form $M^*\cup\sigma_i(M)^*$, with 
$M\in \mathcal{F}_{m}(Z)$ such that  $[\pi_i(M)]\in T^{\ph}$.
An analogous statement holds for $\psi_i$.

As a consequence, the cone $\NE(\ph_i)$ is the convex cone generated by the classes 
$\ell_M$ for $M\in\ma{F}_m(Z)$ such that $[\pi_i(M)]\in T^{\ph}$, and similarly for $\NE(\psi_i)$. 
\end{lemma}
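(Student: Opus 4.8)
The statement asserts three things about $\ph_i\colon G\to Y_{\ph_i}$: that the general fiber is $\pr^1$, that there are exactly $2^n$ singular fibers of the prescribed form $M^*\cup\sigma_i(M)^*$, and the resulting description of $\NE(\ph_i)$. I would prove these by unwinding the construction of $\ph_i$ in terms of the double cover $\pi_i\colon Z\to Q^n$ and the geometry of linear spaces on the quadric $Q^n$. The central object is the map $\Pi_i\colon G\to\ma{F}_{m-1}(Q^n)$ sending $[L]$ to $[\pi_i(L)]$, composed with the contraction $\ma{F}_{m-1}(Q^n)\to T^{\ph}$ assigning to each $(m-1)$-plane the unique $m$-plane of the family $T^{\ph}$ containing it.

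\emph{General fiber.}
First I would analyze the fibers of $\ph_i$ over a general point. A general $[L]\in G$ maps under $\pi_i$ to an $(m-1)$-plane $\Lambda=\pi_i(L)\subset Q^n$ not passing through the branch point (the $i$th coordinate point), so $\pi_i^{-1}(\Lambda)$ consists of two disjoint $(m-1)$-planes interchanged by $\sigma_i$, namely $L$ and $\sigma_i(L)$. The fiber of $\bar\ph_i$ through $[L]$ parametrizes the $(m-1)$-planes $L'\subset Z$ whose image lies in the \emph{same} $m$-plane $\Phi\in T^{\ph}$ containing $\Lambda$. Since the $(m-1)$-planes inside a fixed $\Phi\cong\pr^m$ form a $\pr^m{}^\vee\cong\pr^m$ and only finitely many (generically one line's worth) lift into $Z$, the key computation is to see that the locus of $(m-1)$-planes in $\Phi$ that lift to $Z$ is a $\pr^1$; this should come from the equation cutting out $Z$ inside $\pi_i^{-1}(\Phi)$, a quadric restriction on a $\pr^m$-bundle. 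After passing to the Stein factorization, the general fiber becomes $\pr^1$. I would make this rigorous by a dimension count together with the fact that $-K_G$ restricts to $\ma{O}(1)$, so any extremal contraction with one-dimensional general fiber must have $\pr^1$ fibers.

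\emph{Singular fibers and $\NE(\ph_i)$.}
The singular fibers arise exactly when the general picture degenerates, i.e. when the relevant $m$-plane $\Phi\in T^{\ph}$ actually lifts to an $m$-plane $M\in\ma{F}_m(Z)$ with $[\pi_i(M)]\in T^{\ph}$. In that case both $M^*$ and $\sigma_i(M)^*$ (recall $\pi_i(\sigma_i(M))=\pi_i(M)$ from \ref{pi(F_m(Z))}) are contracted into the same fiber, and they meet at the single point $[M\cap\sigma_i(M)]\in G$ by \eqref{intersection}, giving the claimed union of two $\pr^m$'s meeting transversally. To count them I would use Paragraph~\ref{pi(F_m(Z))}: the image of $\ma{F}_m(Z)$ in $\ma{F}_m(Q^n)$ has $2^{n+1}$ points, half in each of $T^\ph,T^\psi$, so exactly $2^n$ of the $m$-planes $M$ satisfy $[\pi_i(M)]\in T^\ph$; since $M$ and $\sigma_i(M)$ have the same image, these organize into $2^n$ singular fibers, matching the claim. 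The description of $\NE(\ph_i)$ is then immediate: a contraction's relative cone of curves is generated by the classes of curves in its fibers, and the fibers are the $\pr^1$'s (general, class some $\ell$) together with the $\pr^m$'s $M^*$, whose lines have class $\ell_M$; since in a numerically trivial fiber the two lines $\ell_M$ and $\ell_{\sigma_i(M)}$ are numerically proportional to the general fiber class, the cone is generated precisely by the $\ell_M$ with $[\pi_i(M)]\in T^\ph$.

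\emph{Main obstacle.}
The hard part will be the precise local analysis of the degenerate fibers: showing that when $\Phi$ lifts, the scheme-theoretic fiber is exactly $M^*\cup\sigma_i(M)^*$ with transverse intersection at one reduced point, and that there are no other, unexpected, reducible or non-reduced fibers. This requires understanding how the linear system of $(m-1)$-planes in $\Phi$ that lift to $Z$ degenerates from a $\pr^1$ to the full $\pr^m$ of subspaces, and controlling the ramification of $\pi_i$ along $M$. Everything else is bookkeeping with the involutions and the two-families structure of $\ma{F}_m(Q^n)$, which is already set up in Section~\ref{prel}.
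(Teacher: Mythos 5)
Your setup coincides with the paper's: you view the fibers of $\bar\ph_i$ over $[\Phi]\in T^{\ph}$ as families of $(m-1)$-planes on the $m$-dimensional quadric $\pi_i^{-1}(\Phi)=\Lambda'\cap Z=\Lambda'\cap Q_1$, where $\Lambda'\subset\pr^{n+2}$ is the $(m+1)$-plane over $\Phi$ through the $i$th coordinate point; and your count of $2^n$ singular fibers via Paragraph~\ref{pi(F_m(Z))}, together with the identification of the node $[M\cap\sigma_i(M)]$ via \eqref{intersection}, is exactly right. But the step you flag as the ``main obstacle'' is not peripheral: it is the actual content of the paper's proof, and your proposal leaves it open. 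The paper settles it by stratifying according to the rank of the quadric $\Lambda'\cap Z$ in $\Lambda'\cong\pr^{m+1}$: this quadric contains $(m-1)$-planes if and only if its rank is at most $4$. Rank $4$ gives two disjoint $\pr^1$-families of $(m-1)$-planes, hence (after Stein factorization) two smooth $\pr^1$ fibers of $\ph_i$ --- and since the general point of $\bar\ph_i(G)$ has rank exactly $4$, this is also how the general fiber is computed, rather than by your somewhat vague dimension count. Rank $3$ gives a fiber whose reduction is $\pr^1$; rank $2$ gives precisely $\Lambda'\cap Z=M\cup\sigma_i(M)$ and fiber $M^*\cup\sigma_i(M)^*$; rank $1$ is excluded because it would force $\bar\ph_i^{-1}([\pi_i(M)])=M^*$, contradicting the rank-$2$ computation. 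The residual worry you raise --- non-reduced or otherwise unexpected fibers, coming from the rank-$3$ stratum --- is disposed of by a second idea absent from your plan: over $U$, the complement in $Y_{\ph_i}$ of the images of the fibers $M^*\cup\sigma_i(M)^*$, all fibers are one-dimensional, so since $G$ is Fano, $\ph_i$ is a conic bundle over $U$; a conic bundle with nonempty discriminant has general singular fiber reduced with two irreducible components, and since no such fibers exist, $\ph_i$ is smooth over $U$. Without these two ingredients your argument cannot conclude that the $2^n$ fibers $M^*\cup\sigma_i(M)^*$ are the \emph{only} singular fibers.

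Two smaller points. Your fallback for the general fiber, ``any extremal contraction with one-dimensional general fiber must have $\pr^1$ fibers,'' is shaky as stated: $\ph_i$ contracts a face of $\NE(G)$ spanned by many extremal rays, not a single ray, so it is not an extremal contraction in the usual sense. And in your derivation of $\NE(\ph_i)$ the phrase that $\ell_M$ and $\ell_{\sigma_i(M)}$ are ``numerically proportional to the general fiber class'' is incorrect as written; what is true, and what makes the argument work, is that their \emph{sum} equals the class of the general fiber (cf.\ Corollary~\ref{extremalfaces}), so the fiber class lies in the cone generated by the $\ell_M$ with $[\pi_i(M)]\in T^{\ph}$, while every other contracted curve lies in some $M^*$ and is a multiple of $\ell_M$.
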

\begin{proof}
For simplicity we assume in the proof that $m\geq 2$ and $n\geq 4$, the case $n=2$ being classical.

Let $[\Lambda]\in T^{\ph}\subset\Gr(m,\pr^{n+1})$, and let $\Lambda'\subset\pr^{n+2}$ be the $(m+1)$-plane through the $i$th coordinate point 
that projects onto $\Lambda\subset\pr^{n+1}$.
Then $\Lambda'$ is contained in a singular quadric of the pencil of quadrics through $Z$, 
so that $\Lambda'\cap Z=\Lambda'\cap Q_1$  is an $m$-dimensional quadric in $\Lambda'$. 
Hence $[\Lambda]\in\bar\ph_i(G)$ if and only if  $\Lambda'\cap Z$ contains an $(m-1)$-plane.
This happens if and only if the quadric  $\Lambda'\cap Z$ has rank at most $4$.

If  the $m$-dimensional quadric $\Lambda'\cap Z$ has rank $4$, then it is the join of a $(m-3)$-plane with 
a smooth quadric surface $\cong \pr^1\times \pr^1$. 
So it contains two distinct  $1$-dimensional families of $(m-1)$-planes, each parametrized by $\pr^1$. 
Therefore  $\bar\ph_i^{-1}([\Lambda])$ is the disjoint union of two copies of $\pr^1$,
and this yields two smooth fibers of $\ph_i$, each isomorphic to $\pr^1$.

If  $\Lambda'\cap Z$ has rank $3$, then it is the join of an $(m-2)$-plane with a plane conic.
So it contains a one-dimensional family of $(m-1)$-planes, parametrized by the conic. 
Thus in this case $\bar\ph_i^{-1}([\Lambda])_{red}\cong\pr^1$, and  
 this yields a fiber of $\ph_i$ with reduced structure isomorphic to $\pr^1$.

If  $\Lambda'\cap Z$ has rank $2$, then it is the union of two $m$-planes intersecting in codimension one, both projecting onto $\Lambda$. 
Thus there exists $M\in\ma{F}_{m}(Z)$ such that $\Lambda=\pi_i(M)$, $\Lambda'\cap Z= M\cup\sigma_i(M)$, and $\bar\ph_i^{-1}([\Lambda])=M^*\cup\sigma_i(M)^*$. 
It follows from \eqref{intersection} that $M^*$ and $\sigma_i(M)^*$ intersect in one point.

Finally if  $\Lambda'\cap Z$ has rank $1$, then set-theoretically we should have $\Lambda'\cap Z=M$ for some $M\in\ma{F}_{m}(Z)$, and hence $\bar\ph_i^{-1}([\pi_i(M)])=M^*$, which is impossible  because we have already seen that 
$\bar\ph_i^{-1}([\pi_i(M)])=M^*\cup\sigma_i(M)^*$.

Now set 
$$
U:=Y_{\ph_i}\smallsetminus \big\{\ph_i\big(M^*\cup\sigma_i(M)^*\big)\,|\,M\in\ma{F}_m(Z)\text{ and }[\pi_i(M)]\in T^{\ph}\big\}.
$$ 
We have shown that $\ph_i$ has one-dimensional fibers over $U$, and since $G$ is Fano,  $\ph_i$ is a conic bundle over $U$. 
A general singular fiber should be reduced with two irreducible components. Since there are no such fibers,  $\ph_i$ is smooth over $U$.
\end{proof}

In Paragraph~\ref{Y_ph_i} we will characterize the varieties $Y_{\ph_i}$ and $Y_{\psi_i}$. 

Fix $M_0\in \mathcal{F}_{m}(Z)$ such that $[\pi_i(M_0)]\in T^{\psi}$, and 
follow the notation introduced in \ref{M_0}.
It follows from Paragraph~\ref{pi(F_m(Z))} that,  for every $I\subseteq\{1,\dotsc,n+3\}$ such that $i\not\in I$:
$$
[\pi_i(M_I)]\in\begin{cases} T^{\ph}\text{ if } |I| \text{ is odd,}\\
T^{\psi}\text{ if } |I| \text{ is even.}\end{cases}
$$
So we get the following corollary of Lemma~\ref{fibertype}:
\begin{corollary}\label{extremalfaces}
We have:
$$
\NE(\ph_i)=\Cone(\ell_{M_I})_{|I|\text{odd},\,i\not\in I}\quad\text{and}\quad
\NE(\psi_i)=\Cone(\ell_{M_I})_{|I|\text{even},\,i\not\in I}.
$$
The general fiber of $\ph_i$ has class $\ell_{M_j}+\ell_{M_{ij}}$ for $j\neq i$, and the general fiber of $\psi_i$ has class 
$\ell_{M_0}+\ell_{M_i}$.
\end{corollary}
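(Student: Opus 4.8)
The plan is to derive both assertions from Lemma~\ref{fibertype} combined with the parity dichotomy of Paragraph~\ref{pi(F_m(Z))}, and then to identify the fiber classes by degenerating a general fiber to a singular one. For the cones, recall from Lemma~\ref{fibertype} that $\NE(\ph_i)$ (resp.\ $\NE(\psi_i)$) is generated by the classes $\ell_M$ with $[\pi_i(M)]\in T^{\ph}$ (resp.\ $T^{\psi}$). Since $M_I=M_{I^c}$ and exactly one of $I,I^c$ omits $i$, every $M\in\ma{F}_m(Z)$ equals $M_I$ for a unique $I$ with $i\notin I$. With the normalization $[\pi_i(M_0)]\in T^{\psi}$, Paragraph~\ref{pi(F_m(Z))} says that $\pi_i(M_I)=\pi_i(\sigma_I(M_0))$ and $\pi_i(M_0)$ lie in the same component of $\ma{F}_m(Q^n)$ if and only if $|I|$ is even; hence $[\pi_i(M_I)]\in T^{\ph}$ iff $|I|$ is odd and $[\pi_i(M_I)]\in T^{\psi}$ iff $|I|$ is even. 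This is exactly the claimed generation of $\NE(\ph_i)$ and $\NE(\psi_i)$.

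To compute the class $[F]$ of a general fiber of $\ph_i$, I would fix $j\neq i$. The subset $\{j\}$ is odd, so $[\pi_i(M_j)]\in T^{\ph}$ and $M_j^*\cup\sigma_i(M_j)^*=M_j^*\cup M_{ij}^*$ is a singular fiber of $\ph_i$ by Lemma~\ref{fibertype}, its two $\pr^m$'s meeting at the point $[M_j\cap M_{ij}]$ by \eqref{intersection}. Over the complement $U$ of the finitely many images of the singular fibers, $\ph_i$ is smooth with fiber $\pr^1$, so a general such $F$ has trivial normal bundle and $-K_G|_F=-K_F=\ma{O}_{\pr^1}(2)$, giving $-K_G\cdot F=2$; on the other hand $-K_G\cdot\ell_M=1$ because $M^*\subset G$ is a linearly embedded $\pr^m$ and $-K_G$ restricts to $\ma{O}_{\Gr}(1)$. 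As $U$ is connected, all these fibers are algebraically, hence numerically, equivalent.

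I would then let the base point of $F$ tend to $\ph_i(M_j^*\cup M_{ij}^*)$ and take the flat limit, a $1$-cycle of $(-K_G)$-degree $2$ supported on $M_j^*\cup M_{ij}^*$. Geometrically, $F$ sweeps one ruling of the rank-$4$ quadric $\Lambda'\cap Z=\langle\Pi,\pr^1\times\pr^1\rangle$ of the proof of Lemma~\ref{fibertype}, the fiber $(m-1)$-planes being the joins $\langle\Pi,r\rangle$ with $r$ a ruling line. As $\Lambda'\cap Z$ degenerates to the rank-$2$ quadric whose two planes produce $M_j$ and $M_{ij}$, the ruling lines $r$ limit into both of these planes, so the limiting $(m-1)$-planes fill a pencil inside $M_j$ and a pencil inside $M_{ij}$. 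Thus the limit cycle has support in each of $M_j^*$ and $M_{ij}^*$; being effective of $(-K_G)$-degree $2$ with a degree-$1$ (hence linear, hence $\ell$-class) piece in each component, it equals $\ell_{M_j}+\ell_{M_{ij}}$, and so $[F]=\ell_{M_j}+\ell_{M_{ij}}$ for every $j\neq i$. Running the identical argument for $\psi_i$, starting from the even subset $\emptyset$ and the singular fiber $M_0^*\cup M_i^*$, yields the class $\ell_{M_0}+\ell_{M_i}$.

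The parity bookkeeping in the first paragraph is routine; the main obstacle is the last step, namely showing that the flat limit is the reduced line-pair $\ell_{M_j}+\ell_{M_{ij}}$ rather than, say, a double line in a single component. I would secure this through the explicit degeneration of the two rulings of the smooth quadric surface into the pair of planes of the rank-$2$ limit: because the limiting ruling genuinely meets both planes, the limit curve acquires support in each of $M_j^*$ and $M_{ij}^*$, and the total $(-K_G)$-degree $2$ then forces both coefficients to equal $1$. As a consistency check, carrying the degeneration through the singular fiber attached to a different index $j'\neq i$ computes the same class $[F]$, which yields the numerical relations $\ell_{M_j}+\ell_{M_{ij}}=\ell_{M_{j'}}+\ell_{M_{ij'}}$, mirroring the relation \eqref{eta} in $H^n(Z,\Z)$.
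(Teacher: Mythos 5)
Your proof is correct. The first half coincides with the paper's own (implicit) argument: Corollary~\ref{extremalfaces} carries no separate proof precisely because, after normalizing $[\pi_i(M_0)]\in T^{\psi}$, the parity rule of Paragraph~\ref{pi(F_m(Z))} sorts the classes $\ell_{M_I}$ with $i\notin I$ into the two cones of Lemma~\ref{fibertype} exactly as you say, and the fiber classes are then read off from the singular fibers $M_j^*\cup M_{ij}^*$ and $M_0^*\cup M_i^*$. What you add is an honest justification of the fiber-class claim, which the paper leaves implicit, and your degeneration works: the degree count $-K_G\cdot F=2$ (adjunction along a smooth fiber) together with $-K_G\cdot\ell_M=1$ reduces everything to showing that the flat limit is the reduced cycle with support in both components. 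One caveat on your stated mechanism: it is not literally true that the ruling lines limit into both planes --- for all but one value of the ruling parameter the limit line lies in a single fixed plane, and the pencil in the other plane appears only through direction-dependent limits at the special parameter (an explicit Pl\"ucker computation confirms the limit is a reduced line pair). Your sweep remark is the correct repair: by properness of the incidence family, the lines of the limit cycle must sweep out the flat limit of the quadrics, namely all of $M_j\cup M_{ij}$, which a cycle $2\ell_{M_j}$ supported in one component cannot do. Two cheaper alternatives are available within the paper's framework: since $\pi_i\circ\sigma_i=\pi_i$, the fiber class $[F]$ is $\sigma_i$-invariant while $\sigma_i$ swaps $\ell_{M_j}$ and $\ell_{M_{ij}}$, forcing $[F]=\ell_{M_j}+\ell_{M_{ij}}$; or apply $\alpha$ directly, noting that $\alpha([F])=e_*\pi^*([F])$ is the class of the swept quadric $\Lambda'\cap Z$, algebraically equivalent to the rank-two member $M_j\cup M_{ij}$, so $[F]=\alpha^{-1}(M_j+M_{ij})$ at once --- which also explains, via \eqref{eta}, why your consistency check for different $j$ comes out right.
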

\end{parg}
\begin{parg}[The isomorphisms between $H^{2n-2}(G,\Z)$, $H^n(Z,\Z)$, and $H^2(G,\Z)$]\label{alphabeta}
Recall that, by Poincar\'e duality, the intersection product gives a perfect pairing 
$$
H^2(G,\Z)\times H^{2n-2}(G,\Z)\to\Z.
$$
We will define natural isomorphisms $H^{2n-2}(G,\Z)\cong H^n(Z,\Z)$ and $H^2(G,\Z)\cong H^n(Z,\Z)$, 
which behave well with respect to the intersection products. 
This construction is due to Borcea in the case $n=4$ \cite[\S 2]{borcea91}. 
Throughout this section, we use the same notation as in Section~\ref{prel}.

Consider the incidence variety
$$
\mathcal{I}:=\{([L],p)\in G\times Z\,|\,p\in L\}
$$
and the associated diagram
$$
\xymatrix{
&{\mathcal{I}}\ar[dl]_{\pi}\ar[dr]^e&\\
G&&Z.
}
$$
The morphism $\pi$ is a $\pr^{m-1}$-bundle, hence $\ma{I}$ is smooth, irreducible, of dimension $3m-1=\frac{3}{2}n-1$.
Consider the following morphisms given by pull-backs and Gysin homomorphisms:
\begin{align*} 
\alpha:= & e_*\circ\pi^*\colon H^{2n-2}(G,\Z)\stackrel{\pi^*}{\la} H^{2n-2}(\ma{I},\Z)\stackrel{e_*}{\la} H^n(Z,\Z),\\
\beta:=& \pi_*\circ e^*\colon H^n(Z,\Z)\stackrel{e^*}{\la} H^n(\ma{I},\Z)\stackrel{\pi_*}{\la} H^2(G,\Z),
\end{align*}
so that we have
  \stepcounter{thm}
\begin{equation} \label{alpha&beta}
H^{2n-2}(G,\Z)\stackrel{\alpha}{\la} H^{n}(Z,\Z)\stackrel{\beta}{\la} H^2(G,\Z).
\end{equation}
Note that  $\alpha(\ell_M)=M$ for every $M\in\ma{F}_m(Z)$. 
We set $E_M:=\beta(M)\in H^2(G,\Z)$ for every  $M\in\ma{F}_{m}(Z)$.
\begin{proposition}[\cite{borcea91}, Proposition 2.2]\label{iso}
Both $\alpha$ and $\beta$ are isomorphisms, and they are dual to each other with respect to the intersection products. Namely:
$$
x \cdot \beta(y)=\alpha(x) \cdot y\quad\text{for every } x \in  H^{2n-2}(G,\Z) \text{ and }y \in H^n(Z,\Z).
$$
\end{proposition}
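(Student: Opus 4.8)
The plan is to dispatch the duality identity first, since it is purely formal and makes the isomorphism statement for $\beta$ essentially automatic once it is known for $\alpha$. Both $\pi$ and $e$ are proper morphisms of smooth projective varieties: $\ma{I}$ is a closed subvariety of the projective variety $G\times Z$, and $\pi$ is a projective bundle. Hence the Gysin pushforwards $\pi_*$ and $e_*$ are defined and satisfy the projection formula. A preliminary degree check confirms the bookkeeping: for $x\in H^{2n-2}(G,\Z)$ and $y\in H^n(Z,\Z)$, the class $\pi^*x\cup e^*y$ lies in $H^{3n-2}(\ma{I},\Z)$, which is the top degree of $\ma{I}$ since $\dim_{\R}\ma{I}=6m-2=3n-2$; moreover $\alpha$ lands in $H^n(Z,\Z)$ and $\beta$ in $H^2(G,\Z)$, as required for the pairings to make sense.

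I would then compute both sides of the claimed identity by reducing each to the single integral $\int_{\ma{I}}\pi^*x\cup e^*y$. Since $\beta(y)=\pi_*(e^*y)$, the projection formula for $\pi$ gives $x\cup\pi_*(e^*y)=\pi_*(\pi^*x\cup e^*y)$, and integrating over $G$ yields $x\cdot\beta(y)=\int_{\ma{I}}\pi^*x\cup e^*y$. Dually, since $\alpha(x)=e_*(\pi^*x)$, the projection formula for $e$ gives $e_*(\pi^*x)\cup y=e_*(\pi^*x\cup e^*y)$, and integrating over $Z$ yields $\alpha(x)\cdot y=\int_{\ma{I}}\pi^*x\cup e^*y$. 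Comparing the two expressions establishes $x\cdot\beta(y)=\alpha(x)\cdot y$.

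For the isomorphism statement, I would first treat $\alpha$. Since $\alpha(\ell_M)=M$ for every $M\in\ma{F}_m(Z)$ (recorded just before the statement) and the classes $\{M\}_{M\in\ma{F}_m(Z)}$ generate $H^n(Z,\Z)$ over $\Z$ by Paragraph~\ref{H^n(Z)}, the map $\alpha$ is surjective. As $H^{2n-2}(G,\Z)$ and $H^n(Z,\Z)$ are both free abelian of the same rank $n+4$, a surjective homomorphism between them is injective by rank–nullity (its kernel is a rank-zero subgroup of a free group), hence an isomorphism. To deduce the same for $\beta$, I would invoke the duality together with the perfectness of the relevant pairings: Poincar\'e duality makes $H^2(G,\Z)\times H^{2n-2}(G,\Z)\to\Z$ a perfect pairing, and the intersection form on $H^n(Z,\Z)$ is unimodular by Paragraph~\ref{H^n(Z)}. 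Under the resulting identifications of $H^2(G,\Z)$ and $H^n(Z,\Z)$ with the $\Z$-duals of $H^{2n-2}(G,\Z)$ and $H^n(Z,\Z)$, the identity $x\cdot\beta(y)=\alpha(x)\cdot y$ says exactly that $\beta$ is identified with the transpose $\alpha^{*}$ of $\alpha$. Since $\alpha$ is an isomorphism, so is $\alpha^{*}$, and therefore so is $\beta$.

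There is no serious obstacle: the argument is the projection formula plus a rank count plus unimodularity. The only points demanding care are the degree bookkeeping for the Gysin maps (already verified above) and the input $\alpha(\ell_M)=M$, which ultimately rests on the fact that a line in $M^{*}$ is a pencil of $(m-1)$-planes sweeping out $M$ with degree one under $e$, so that $e_*\pi^*\ell_M=M$.
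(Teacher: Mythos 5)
Your proposal is correct and follows essentially the same route as the paper: surjectivity of $\alpha$ from $\alpha(\ell_M)=M$ together with the fact that the classes $\{M\}_{M\in\ma{F}_m(Z)}$ generate $H^n(Z,\Z)$, a rank count to upgrade surjectivity to isomorphism, and the identification of $\beta$ as the transpose of $\alpha$ (the paper cites ``properties of Poincar\'e duality'' where you spell out the projection-formula computation $x\cdot\beta(y)=\int_{\ma{I}}\pi^*x\cup e^*y=\alpha(x)\cdot y$, but the content is the same). Your added degree bookkeeping and the explicit transpose argument via unimodularity are just more detailed renderings of the paper's two-line proof.
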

\begin{proof}
Since $\alpha(\ell_M)=M$, and the classes $\{M\}_{M\in\ma{F}_{m}(Z)}$ generate $H^n(Z,\Z)$, the homomorphism $\alpha$ is surjective. 
Then  $\alpha$ must be an isomorphism, because  $H^{2n-2}(G,\Z)$ and $H^n(Z,\Z)$ are free of the same rank. 

It follows from properties of Poincar\'e duality that $\alpha^t=(e_*\circ\pi^*)^t=(\pi^*)^t\circ (e_*)^t=\pi_*\circ e^*=\beta$, 
so $\alpha$ is the transpose homomorphism of $\beta$. It follows that
 $\beta$ must be an isomorphism too. 
\end{proof}
\begin{corollary}
We have  $\beta(\eta)=-K_G$.
\end{corollary}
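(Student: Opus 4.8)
The plan is to combine the self-duality of $\alpha$ and $\beta$ from Proposition~\ref{iso} with the perfectness of the Poincar\'e pairing $H^2(G,\Z)\times H^{2n-2}(G,\Z)\to\Z$. Both $\beta(\eta)$ and $-K_G$ are classes in $H^2(G,\Z)$, so to prove they coincide it suffices to check that they have the same intersection number with each element of a generating set of $H^{2n-2}(G,\Z)$. Since $\alpha$ is an isomorphism with $\alpha(\ell_M)=M$ and the classes $\{M\}_{M\in\ma{F}_m(Z)}$ generate $H^n(Z,\Z)$, the classes $\{\ell_M\}_{M\in\ma{F}_m(Z)}$ generate $H^{2n-2}(G,\Z)$. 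Thus it is enough to verify $\beta(\eta)\cdot\ell_M=-K_G\cdot\ell_M$ for every $M\in\ma{F}_m(Z)$.

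For the left-hand side I would invoke the duality $x\cdot\beta(y)=\alpha(x)\cdot y$ of Proposition~\ref{iso} with $x=\ell_M$ and $y=\eta$, which gives
$$
\ell_M\cdot\beta(\eta)=\alpha(\ell_M)\cdot\eta=M\cdot\eta=1,
$$
using $\eta\cdot M=1$ from Paragraph~\ref{H^n(Z)}.

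For the right-hand side I would use that $-K_G$ is the restriction to $G$ of $\ma{O}(1)$ on $\Gr(m-1,\pr^{n+2})$ and that $M^*$ is a linearly embedded $\pr^m$ in $G$ under the Pl\"ucker embedding. Hence $\ma{O}(1)$ restricts to $\ma{O}_{\pr^m}(1)$ on $M^*$, and the line $\ell_M\subset M^*$ satisfies $-K_G\cdot\ell_M=\ma{O}(1)\cdot\ell_M=1$. Comparing with the previous computation yields $\beta(\eta)\cdot\ell_M=-K_G\cdot\ell_M$ for all $M$, and therefore $\beta(\eta)=-K_G$. The only non-formal input is this last intersection number, which is the crux of the argument; everything else is a formal consequence of the duality and the Poincar\'e pairing, so I expect no real obstacle.
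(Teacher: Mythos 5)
Your proposal is correct and follows essentially the same route as the paper: the authors also use the duality $x\cdot\beta(y)=\alpha(x)\cdot y$ from Proposition~\ref{iso} to compute $\beta(\eta)\cdot\ell_M=\eta\cdot M=1=-K_G\cdot\ell_M$, and then conclude because the classes $\{\ell_M\}_{M\in\ma{F}_m(Z)}$ generate $H^{2n-2}(G,\Z)$. The only difference is that you spell out the intersection number $-K_G\cdot\ell_M=1$ (via $-K_G=\ma{O}(1)|_G$ and $M^*$ being an $m$-plane under the Pl\"ucker embedding), which the paper leaves implicit.
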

\begin{proof}
Using Proposition \ref{iso}, for every $M\in\ma{F}_m(Z)$ we have 
$$1=\eta\cdot M=\eta\cdot\alpha(\ell_M)=\beta(\eta)\cdot\ell_M=-K_G\cdot\ell_M.$$
 Since $\alpha$ is an isomorphism, and the classes  $\{M\}_{M\in\ma{F}_{m}(Z)}$ generate $H^n(Z,\Z)$, the classes $\{\ell_M\}_{M\in\ma{F}_{m}(Z)}$ generate $H^{2n-2}(G,\Z)$. This yields the statement.
\end{proof}
Consider the  involution $\sigma_I\colon Z\to Z$, $I\subseteq\{1,\dotsc,n+3\}$  defined in Paragraph~\ref{sigma&pi}.
It  induces an involution of $G$, which we denote by the same symbol:
$$
\sigma_I\colon G\la G,\quad [L]\mapsto [\sigma_I(L)].
$$
Therefore the group $W'\cong (\Z/2\Z)^{n+2}$ generated by the involutions $\sigma_i$'s 
acts on $G$, $H^2(G,\Z)$ and $H^{2n-2}(G,\Z)$.
It also acts on the incidence variety $\ma{I}$ in such a way that
both morphisms $\pi$ and $e$ are $W'$-equivariant.
It follows that the isomorphisms $\alpha$ and $\beta$ are $W'$-equivariant.
\end{parg}
\begin{proposition}\label{ell_M}
For every $M\in\ma{F}_m(Z)$, $\ell_M$ generates an extremal ray of $\NE(G)$.
\end{proposition}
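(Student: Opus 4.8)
\emph{Plan.} The idea is to realize $\R_{\geq 0}\ell_M$ as an intersection of the extremal faces cut out by the fibrations $\psi_i$, and then to transport the problem through the isomorphism $\alpha$ into an elementary linear computation in $H^n(Z,\R)$, where the cone $\ma{E}$ and the orthogonal basis $\{\eta,\eps_1,\dotsc,\eps_{n+3}\}$ are available.

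First I would reduce to the case $M=M_0$. Since the group $W'\cong(\Z/2\Z)^{n+2}$ acts on $G$ by automorphisms and transitively on $\ma{F}_m(Z)$, and since $\sigma_J$ sends $M_0^*$ to $\sigma_J(M_0)^*$ and hence $\ell_{M_0}$ to $\ell_{\sigma_J(M_0)}$, it suffices to prove that $\ell_{M_0}$ generates an extremal ray: automorphisms preserve $\NE(G)$ and carry extremal rays to extremal rays. I fix $M_0$ and choose, for each $i$, the labeling of the two families of $m$-planes in $Q^n$ so that $[\pi_i(M_0)]\in T^{\psi}$; then by Corollary~\ref{extremalfaces} we have $\ell_{M_0}\in\NE(\psi_i)$ for every $i=1,\dotsc,n+3$.

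Next I would use that each $\psi_i\colon G\to Y_{\psi_i}$ is a contraction of the Fano variety $G$ (its Stein factorization has connected fibers), so $\NE(\psi_i)$ is an extremal face of the closed polyhedral cone $\NE(G)$; as an intersection of extremal faces is again an extremal face, $F:=\bigcap_{i=1}^{n+3}\NE(\psi_i)$ is an extremal face of $\NE(G)$ containing $\ell_{M_0}$. The whole proof then reduces to showing $F=\R_{\geq 0}\ell_{M_0}$. I apply $\alpha$: since $\alpha$ is a linear isomorphism with $\alpha(\ell_{M_I})=M_I$, Corollary~\ref{extremalfaces} gives $\alpha(\NE(\psi_i))=\Cone(M_I)_{|I|\text{ even},\,i\not\in I}\subseteq\ma{E}$, and hence $\alpha(F)=\bigcap_i\alpha(\NE(\psi_i))$. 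Setting $\ell_i:=\tfrac12\eta-(-1)^m\eps_i$, formula~\eqref{formula} shows that $\ell_i$ vanishes on every generator $M_I$ (with $|I|$ even, $i\notin I$) of $\alpha(\NE(\psi_i))$, so $\alpha(F)\subseteq\bigcap_i\{\ell_i=0\}$. A direct computation in the orthogonal basis, using \eqref{norme}, yields $\ell_i\cdot(y\eta+\sum_j x_j\eps_j)=2y-x_i$, so the common zero locus $\bigcap_i\{\ell_i=0\}$ is the line $\{x_i=2y \text{ for all } i\}=\R M_0$, where I use $M_0=\tfrac14\eta+\tfrac12\sum_j\eps_j$ from \eqref{M_I}. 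Since $M_0=M_\emptyset$ is a common generator of all the $\alpha(\NE(\psi_i))$ and $\ma{E}$ is pointed (every generator has $\eta$-degree $1$), this forces $\alpha(F)=\R_{\geq 0}M_0$, hence $F=\R_{\geq 0}\ell_{M_0}$, which is the desired extremal ray.

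The main obstacle is conceptual rather than computational: one must see that intersecting \emph{all} $n+3$ faces $\NE(\psi_i)$ is exactly what collapses the face to a single ray, since each functional $\ell_i$ eliminates one coordinate $x_i$ and only the full system $\{x_i=2y\}$ is one-dimensional. A related subtlety is that a face of $\ma{E}$ can be strictly larger than the cone over the common generators of the $\alpha(\NE(\psi_i))$; passing to the supporting hyperplanes $\{\ell_i=0\}$, rather than merely intersecting generator sets, is what makes the dimension bound rigorous.
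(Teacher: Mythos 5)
Your proof is correct, and while it starts from the same ingredients as the paper's — Lemma~\ref{fibertype} and Corollary~\ref{extremalfaces} identifying $\NE(\ph_i)$ and $\NE(\psi_i)$ as extremal faces of $\NE(G)$ with explicit generators $\ell_{M_I}$, and the isomorphism $\alpha$ translating the problem into $H^n(Z,\R)$ — it finishes by a genuinely different argument. The paper fixes a \emph{single} index $i$, uses \eqref{formula} to recognize $\alpha(\NE(\ph_i))$ and $\alpha(\NE(\psi_i))$ as facets of $\ma{E}$, and then invokes the demihypercube structure of $\ma{E}$ (Paragraph~\ref{demihypercube}, Remark~\ref{rem:facets}) to conclude that the extremal rays of these facets are exactly the rays $\R_{\geq 0}M_I$, so that each $\ell_M$ is extremal in one of the two faces and hence in $\NE(G)$. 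You instead intersect \emph{all} $n+3$ faces $\NE(\psi_i)$ — your relabeling so that $[\pi_i(M_0)]\in T^{\psi}$ for every $i$ is legitimate, since the labels of the two families of $m$-planes are independent choices for each $i$ — and pin the intersection down to the single ray $\R_{\geq 0}\ell_{M_0}$ by the supporting-hyperplane computation $\ell_i\cdot\bigl(y\eta+\sum_j x_j\eps_j\bigr)=2y-x_i$, with pointedness (via $\eta$) and the $W'$-action finishing the job. What your route buys is self-containedness: you need only \eqref{formula} and elementary linear algebra, not the combinatorial fact that every $M_I$ is a vertex of the demihypercube $\Delta$, which is what the paper's phrase ``whose extremal rays are generated by the classes $M$'' silently uses; what it costs is invoking all $n+3$ fibrations at once rather than one. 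Both proofs rest on the same standard fact, left implicit in each, that $\NE(\psi_i)=\NE(G)\cap(\psi_i^*A)^{\perp}$ for $A$ ample on $Y_{\psi_i}$ is an extremal face; your additional observation that an intersection of extremal faces is again an extremal face is correct and is the only extra convex-geometry input you need.
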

\begin{proof}
Fix $M_0\in \mathcal{F}_{m}(Z)$ and $i\in\{1,\ldots, n+3\}$ such that $[\pi_i(M_0)]\in T^{\psi}$, and 
follow the notation introduced in \ref{M_0}. By Corollary \ref{extremalfaces}, we have:
$$
\alpha(\NE(\ph_i))=\Cone(M_I)_{|I|\text{odd},\,i\not\in I}\quad\text{and}\quad
\alpha(\NE(\psi_i))=\Cone(M_I)_{|I|\text{even},\,i\not\in I}.
$$
By \eqref{formula}, these are facets of the cone $\ma{E}\subset H^n(Z,\R)$, whose extremal rays are generated by the classes 
$M=\alpha(\ell_{M})$ contained in these facets. 
Thus, for every $M\in\ma{F}_m(Z)$ the class $\ell_{M}$ generates an extremal ray of either 
 $\NE(\ph_i)$ or  $\NE(\psi_i)$, and hence of $\NE(G)$. 
\end{proof}


\section{The blow-up $X$ of $\pr^n$ at $n+3$ points}\label{blowup}
\noindent 
Let $n\geq 3$ be an integer. 
Unless otherwise stated, in this section we do not assume that $n$ is even.
Let $\ma{P}=\{p_1,\dotsc,p_{n+3}\}\subset \pr^n$ be a set of distinct points in general linear position, 
and denote by $C$ the unique rational normal curve in $\pr^n$
through these points.  
Let $X=X_{\ma{P}}$ be the blow-up of $\pr^n$ at $p_1,\dotsc,p_{n+3}$. 
Notice that acting on $\ma{P}=\{p_1,\dotsc,p_{n+3}\}$ by permutations and projective automorphisms of $\pr^n$ yields isomorphic varieties $X_{\ma{P}}$.
The variety $X$ and its birational geometry have been widely studied.
We refer the reader to \cite{dolgachev,bauer,mukaiXIV,mukaiADE,CoxCT,araujo_massarenti,BDP} and references therein. 

We have $\Pic(X)\cong H^2(X,\Z)$ and $\Nu(X)\cong H^2(X,\R)$.
We denote by $H$ the pullback to $X$ of the hyperplane class in $\pr^n$, and by $E_i$ the exceptional divisor over the point $p_i$
(as well as its class in $H^2(X,\Z)$). 
\begin{parg}[Special subvarieties of $X$] \label{J_Is}
Given a subset $I\subset \{1, \cdots, n+3\}$, with $|I|=d\leq n$, and an 
integer $0\leq s\leq \frac{n-d}{2}$, we consider the join
$$
\Jo\big(\langle p_i\rangle_{i\in I}, \Sec_{s-1}(C)\big)\subset \pr^n 
$$
(here we write $\Sec_k(C)$ for the subvariety of $\pr^n$ obtained as the closure of the union of all $k$-planes 
spanned by $k+1$ general points of $C$, for $k\geq 0$; in particular
$\Sec_{0}(C)=C$.
 We also set $\Sec_{-1}(C)=\emptyset$.)

This join has dimension equal to $d+2s-1$. 
We denote by $J_{I,s}\subset X$ the strict transform of $\Jo\big(\langle p_i\rangle_{i\in I}, \Sec_{s-1}(C)\big)$.
When $d+2s=n$ (so that $|I^{c}|= n+3-3=2s+3$ is odd) we denote the divisor $J_{I,s}$ and
its class in $H^2(X,\Z)$ by $E_I$; in particular, for $n=2m$ even, $E_{\emptyset}=J_{\emptyset,m}$ is the strict transform of $\Sec_{m-1}(C)$.
For $I={\{i\}^{c}}$, we set $E_I=E_i$. 
For every $I\subset \{1, \cdots, n+3\}$ with $|I^{c}|= 2s+3$ odd, $s\geq 0$, we have the following identity in 
$H^2(X,\Z)$:
\stepcounter{thm}
\begin{equation}\label{E_I}
E_I \ = \ (s+1)H - (s+1)\sum_{i\in I}E_i -s \sum_{j\not\in I}E_j.
\end{equation}
\end{parg}

By \cite[Theorem 1.2]{CoxCT}, each $E_I$ generates an extremal ray of $\Eff(X)$, and all extremal rays
are of this form.
Moreover, by \cite[Theorem 1.3]{CoxCT} and \cite{mukaiADE}, $X$ is a Mori dream space (MDS for short).
We refer to \cite{hukeel} for the definition and basic properties of MDS's.
Here we only recall an important feature of a MDS, the Mori chamber decomposition of its effective cone.
\begin{parg}[The Mori chamber decomposition]\label{MCD}
Let $Y$ be a projective, normal and $\Q$-factorial MDS.
The effective cone $\Eff(Y)$ admits a fan structure,  called \emph{Mori chamber decomposition} and denoted by $\MCD(Y)$,
which can be described as follows (see \cite[Proposition 1.11(2)]{hukeel} and \cite[Section 2.2]{okawa_MCD}).
There are finitely many birational contractions (i.e., birational maps whose inverses do not contract any divisor)
from $Y$ to  projective, normal and $\Q$-factorial MDS's, denoted by $g_i\colon Y\map Y_i$.
The set $\Exc(g_i)$ of classes of exceptional prime divisors of   $g_i$ has cardinality $\rho(Y)-\rho(Y_i)$.
The maximal cones $\ma{C}_i$ of the fan $\MCD(Y)$ are of the form:
$$
\ma{C}_i \ = \ \Cone \ \Big( \ g_i^*\big(\Nef(Y_i)\big)\ , \  \Exc(g_i) \ \Big).
$$
By abuse of notation, we often write $\Nef(Y_i)\subset \Eff(Y)$ for $ g_i^*\big(\Nef(Y_i)\big)\subset \Eff(Y)$.
If $\Exc(g_i)=\emptyset$, then we say that $g_i\colon Y\map Y_i$ is a small $\Q$-factorial modification of $Y$.
The movable cone $\Mov(Y)$ of $Y$ is the union 
$$
\Mov(Y) \ = \ \bigcup_{\Exc(g_i)=\emptyset}\ma{C}_i.
$$

An arbitrary cone $\sigma\in \MCD(Y)$ is of the form
$$
\sigma \ = \ \Cone \ \Big( \ f^*\big(\Nef(W)\big)\ , \  \ma{E} \ \Big),
$$
where $f\colon Y\map W$ is a dominant rational map to  a normal projective variety, which factors as 
$Y\stackrel{g_i}{\dasharrow} Y_i\stackrel{f_i}{\la} W$ for some $i$, where $f_i
\colon Y_i\to W$ is the contraction 
of an extremal face of $\Nef(Y_i)$, and $\ma{E}\subset \Exc(g_i)$.

Given an effective divisor $D$ on $Y$, its class in $\Nu(Y)$ lies in the relative interior of 
some cone in $\MCD(Y)$, say $\Cone \big(  f^*\big(\Nef(W)\big) ,   \ma{E}  \big)$.
The map $f\colon Y\map W$ coincides with  the map $\varphi_{|mD|}$ for $m\gg 1$ divisible enough.
In this case, we write $Y_D$ for the variety $W$.
\end{parg}

Now we go back to $X$.
Our next goal is to describe the Mori chamber decomposition of $\Eff(X)$, following  \cite{mukaiADE} and \cite{bauer}
(see also \cite[Section 3]{araujo_massarenti}).

Let $(y, x_1,\dots, x_{n+3})$ be the coordinates in  $H^2(X,\R)$ induced by the basis  $(H,E_1,\dotsc,E_{n+3})$,
and consider the  affine hyperplane 
$$
\ma{H}=\Big((n+1)y +\sum x_i=1\Big)\subset H^2(X,\R).
$$
It contains all the generators $E_I$ of $\Eff(X)$ described above, as well as  $\frac{1}{4}(-K_X)$.

We now observe that the convex hull of the $E_I$'s in $\ma{H}$ is a demihypercube. To see this, we need suitable coordinates in $\ma{H}$.
For $i=1,\dotsc,n+3$, set 
\stepcounter{thm}
\begin{equation} \label{epstilde}
\tilde\eps_i:=\frac{1}{2}\Big(H-\sum_{j\neq i}E_j+E_i\Big).
\end{equation}
Then $\{\tilde\eps_1,\dotsc,\tilde\eps_{n+3}\}$ is a basis for the linear subspace $\big((n+1)y +\sum x_i=0\big)$, 
so that $\big(\frac{1}{4}(-K_X),\{\tilde\eps_1,\dotsc,\tilde\eps_{n+3}\}\big)$ induces affine coordinates $(\alpha_1,\dotsc,\alpha_{n+3})$ 
in $\ma{H}\cong\R^{n+3}$. The radial projection 
$$
H^2(X,\R)\smallsetminus\Big((n+1)y +\sum x_i=0\Big)\la\ma{H}
$$ 
is given in coordinates by:
\stepcounter{thm}
\begin{equation} \label{eq:phi}
\alpha_i   \ = \  \frac{y+x_i}{(n+1)y +\sum x_i} - \frac{1}{2},\quad\text{ for }i=1,\dotsc,n+3.
\end{equation}
In the coordinates $\alpha_i$,  $\frac{1}{4}(-K_X)$ is identified with the origin,
and $E_I$ with $v_{I^c}$, with  the notation introduced in Paragraph~\ref{demihypercube}. Thus $\Eff(X)\cap\ma{H}$ is identified with the demihypercube 
 $\Delta \subset \R^{n+3}$
described in Paragraph~\ref{demihypercube}:
$$
\Delta \ = \  \left\{ 
\begin{aligned}
& -\frac{1}{2}\leq \alpha_i \leq \frac{1}{2}, \ & i\in \{1, \dots, N\} \\
&\ H_I\geq 1 , \ & |I| \text{ even.}
\end{aligned}
\right.
$$

Recall the degree $1$ polynomials $H_I$ introduced in \eqref{eq:H_I}, and 
consider the hyperplane arrangement:
 \stepcounter{thm}
\begin{equation} \label{eq:MCD_Delta}
  \Big(\ H_I \ = \  k \ \Big)_{\ I\subset \{1, \dots, n+3\}, \ k\in\mathbb{N},  \ 2\leq k\leq \frac{n+3}{2}, \ |I|\not\equiv k \mod 2.}
\end{equation} 
It defines a subdivision of $\Delta$ in polytopes, and a fan structure on $\Eff(X)$, given by the cones over these polytopes.
By  \cite{mukaiADE} and \cite{bauer}, this fan coincides with $\MCD(X)$. 
Moreover, one has the following description of the wall crossings 
(see \cite[Propositions 2 and 3]{mukaiADE} and also \cite[Section 2]{bauer}):
\begin{enumerate}[(1)]
	\item The intersection of $\Mov(X)$ with the hyperplane $\ma{H}$ is given by 
$$
		\Delta_{\text{\em Mov}} \ = \ \Mov(X)\cap\ma{H} \ = \  \left\{ 
		\begin{aligned}
		& -\frac{1}{2}\leq \alpha_i \leq \frac{1}{2}, \ & i\in \{1, \dots, n+3\} \\
		&H_I\geq 2 , \ & |I| \text{ odd.}
		\end{aligned}
		\right.
$$
	\item All small $\Q$-factorial modifications of $X$ are smooth.
	\item  Let $\ma{C}$ be a maximal cone of $\MCD(X)$, contained in $\Mov(X)$, corresponding to a small $\Q$-factorial modification $\widetilde X$ of $X$.
		Let  $\sigma\subset \partial \ma{C}$ be a wall such that $\sigma\subset \partial \Mov(X)$, and let 
		$f\colon \widetilde X\to Y$ be the corresponding elementary contraction.
		Then $\sigma\cap\ma{H}\subset\Delta_{\text{\em Mov}}$ is supported on a hyperplane of one of the following forms:
		\begin{enumerate}
			\item ($\alpha_i=-\frac{1}{2}$) or ($\alpha_i=\frac{1}{2}$). 
			\item ($H_I= 2$), with $|I|$ odd. 
		\end{enumerate}
		In case (a),  $f\colon \widetilde X\to Y$ is a $\pr^1$-bundle. In case (b), $f\colon \widetilde X\to Y$ is the blow-up of a smooth point,
		and the exceptional divisor of $f$ is the strict transform  in $\widetilde X$ of the divisor $E_{I^{^c}}\subset X$.
	\item Let $\ma{C}$ and $\ma{C}'$ be two maximal cones of $\MCD(X)$, contained in $\Mov(X)$, and having a common facet. 
		Let $f\colon X\map \widetilde X$ and $f'\colon X\map\widetilde X'$  be the corresponding 
	 	small $\Q$-factorial modifications of $X$.
		The intersections of these cones with $\ma{H}$ are separated in $\Delta$ by a hyperplane of the form
		$(H_I =  k)$, with $3\leq k\leq \frac{n+3}{2}$ and $|I|\not\equiv k \mod 2$. 
		Suppose that $\ma{C}\cap\ma{H}\subset (H_I \leq  k)$ and $\ma{C}'\cap\ma{H}\subset (H_I \geq  k)$.
		Then the birational map $f'\circ f^{-1}\colon \widetilde X \map \widetilde X'$ flips a $\pr^{k-2}$ into a $\pr^{n+1-k}$.
\end{enumerate}
\begin{remark}\label{flip_locus}
It is possible to give a more precise description of the flipping locus $\pr^{k-2}\subset \widetilde X$ (or $\pr^{n+1-k}\subset \widetilde X'$) 
in the situation described under (4) above (see \cite[Proposition 2.6(iv) and Theorem 2.9]{bauer}):
Consider the nef cone of $X$ and its section with $\ma{H}$, 
$$
		\Delta_{\text{\em Nef}} \ = \ \Nef(X)\cap\ma{H} \ = \  \left\{ 
		\begin{aligned}
		&H_{\{i\}}\geq 2, \ & i\in \{1, \dots, n+3\} \\
		&H_{\{i,j\}}\leq 3 , \ &  i, j\in \{1, \dots, n+3\}, \ i\neq j.
		\end{aligned}
		\right.
$$

Suppose that $\Delta_{\text{\em Nef}}\subset  (H_I \leq  k)$. 
Then the $\pr^{k-2}\subset \widetilde X$ flipped by $f'\circ f^{-1}$ is the strict transform in  $\widetilde X$ of the special variety  $J_{I,s}\subset X$,
where $s=\frac{k-|I|-1}{2}\geq 0$.

Suppose that $\Delta_{\text{\em Nef}}\subset  (H_I \geq  k)$. 
Then the $\pr^{n+1-k}\subset \widetilde X'$ flipped by $f\circ (f')^{-1}$ is the strict transform  in $\widetilde X'$ of the special variety $J_{I^c,s'}\subset X$,
where $s'=\frac{|I|-k-1}{2}\geq 0$. 
\end{remark}
\begin{remark}\label{projection}
Recall from Paragraph \ref{demihypercube} the description of the facets of $\Delta$. 
Each of the $2(n+3)$ facets of $\Delta$ supported on the hyperplanes $(\alpha_i=\pm\frac{1}{2})$ intersects $\Delta_{\text{\em Mov}}$ along a facet, 
while the other facets of $\Delta$, supported on the hyperplanes $(H_I=1)$ for $|I|$ even, are disjoint from $\Delta_{\text{\em Mov}}$.
Let us describe the rational maps associated to the facets of $\Delta_{\text{\em Mov}}$ supported on the hyperplanes $(\alpha_i=\pm\frac{1}{2})$.

Fix $i\in\{1,\dotsc,n+3\}$ and let
$\ma{P}_i\subset \pr^{n-1}$ be the image of the set $\ma{P}\smallsetminus \{p_i\}$ under the projection 
$\pi_{p_i}\colon \pr^n\map \pr^{n-1}$  from $p_i$. Let $Y=(X_{\ma{P}_i})^{n-1}$ be the blow-up of $\pr^{n-1}$ at the $n+2$ points in $\ma{P}_i$. 

There is a small $\Q$-factorial modification $X\map X_i$ and a $\pr^1$-bundle $X_i\to Y$
extending $\pi_{p_i}$  (see \cite[Example 1]{mukaiADE}).  Let $\pi_i\colon X\dasharrow Y$ be the composite map.
The general fiber of $\pi_i$ is the strict transform in $X$ of a general line in $\pr^n$ through $p_i$.
The hyperplane $(\pi_i)^*H^2(Y,\R)$ has equation $y+x_i=0$. Using \eqref{eq:phi}, we see that  $(\pi_i)^*H^2(Y,\R)\cap\ma{H}$ is the hyperplane 
$\big(\alpha_i=-\frac{1}{2}\big)$. Thus the cone  $(\pi_i)^*\Eff(Y)$ is the cone over  the polytope $\Delta\cap(\alpha_i=-\frac{1}{2})$, which is an $(n+2)$-dimensional demihypercube.

Similarly, there is a map $\pi'_i\colon X\dasharrow Y$ whose general fiber is the strict transform in $X$ of a general rational normal curve through 
the points $p_\lambda$, $\lambda\neq i$. 
Indeed, fix $j\neq i$ and let $\varphi:\pr^n \map \pr^n$ be the standard Cremona transformation centered at the points $p_\lambda$, $\lambda\neq i,j$. 
This map sends rational normal curves through the points $p_\lambda$, $\lambda\neq i$, to lines through $\varphi(p_j)$. 
There is an automorphism of $\pr^n$ fixing $p_\lambda$, $\lambda\neq i,j$, sending $p_j$ to $\varphi(p_i)$, and  sending $p_i$ to $\varphi(p_j)$
(see Remark~\ref{rem_Dol}).
By composing $\varphi$ with the projection from $\varphi(p_j)$, we obtain a rational map $\pi'_{p_i}\colon \pr^n\map Y$ 
whose general fiber is a general rational normal curve through the points $p_\lambda$, $\lambda\neq i$. 
This yields a $\pr^1$-bundle $X'_i\to Y$ on a small $\Q$-factorial modification of $X$, and the desired map $\pi'_i\colon X\dasharrow Y$.
As before, one checks that $(\pi_i)^*\Eff(Y)$ is the cone over  the demihypercube $\Delta\cap(\alpha_i=\frac{1}{2})$.
\end{remark}

The center of the polytopes $\Delta_{\text{\em Mov}}$ and $\Delta$ is the origin $\bar 0\in\R^{n+3}$, which corresponds to
$\frac{1}{4}(-K_X)$. In particular, the divisor $-K_X$ is movable.
We want to describe the Fano model $X^n_{\text{\em Fano}}:= X_{-K_X}$.

If $n$ is odd, then $\bar 0$ is a vertex in the subdivision of $\Delta$, and is
contained in the intersection of the hyperplanes:
$$
 \left( \ H_I\ = \  \frac{n+3}{2} \ \right)_{|I|\not\equiv \frac{n+3}{2} \mod 2}.
$$
Thus $-K_X$ lies in a one-dimensional cone of the fan $\MCD(X)$, contained in the interior of $\Mov(X)$.
Therefore $X^n_{\text{\em Fano}}$ is non $\Q$-factorial and has  Picard number $1$. 

For the remaining of this section, we assume that $n=2m\geq 2$ is even.
Then $\bar 0$ lies in the interior of a maximal polytope in the subdivision of $\Delta_{\text{\em Mov}}$, namely
the polytope defined by: 
 \stepcounter{thm}
\begin{equation} \label{Fano_chamber}
\Delta_{\text{\em Fano}} \ = \ \big( \ H_I\ \geq \  m+1 \ \big)_{|I|\equiv m \mod 2}.
\end{equation}
Then $X^n_{\text{\em Fano}}$ is a small $\Q$-factorial modification of $X$, it is a smooth Fano manifold, and 
$\Nef(X^n_{\text{\em Fano}})\subset \Eff(X)$ is the cone over the polytope $\Delta_{\text{\em Fano}}$. 
 \begin{remark}\label{G=X_Fano}
By
Theorem~\ref{pseudoisom}, when $\ma{P}$ is the image of $\big\{(\lambda_1:1),\dotsc,(\lambda_{n+3}:1)\big\}\subset \pr^1$
under a Veronese embedding $\pr^1\hookrightarrow\pr^n$,  $X$ is pseudo-isomorphic to the Fano variety $G$ addressed in Section~\ref{Fano}. This implies that $X^n_{\text{\em Fano}}$ is isomorphic to $G$.
\end{remark}
\begin{parg}\label{X_Fano}
Using the properties of MDS's, and the description of $\MCD(X)$ above, we can deduce many properties of 
$X^n_{\text{\em Fano}}$:
\begin{enumerate}[$\bullet$]
\item The Mori cone $\NE(X^n_{\text{\em Fano}})$ admits 
exactly $2^{n+2}$ extremal rays, whose corresponding contractions all contract a $\pr^{m}$
to a point.
\item The variety $X^n_{\text{\em Fano}}$ admits $2(n+3)$ distinct (non-trivial) contractions of fiber type.
Indeed, the points in $\partial \Delta_{\text{\em Mov}}\cap \Delta_{\text{\em Fano}}$ are those of the form $\alpha=(\alpha_1, \dots, \alpha_{n+3})$,
where $\alpha_i = -\frac{1}{2} \text{ or } \frac{1}{2}$ for some fixed $i$, and  $\alpha_j =0$ for $j\neq i$. 
These points all lie in $\partial \Delta$.
We denote the corresponding contractions by $\phi_i$ and $\phi'_i$,
respectively. 
\end{enumerate}
\end{parg}
\begin{lemma} \label{image_phi}
The  morphisms $\phi_i$ and $\phi'_i$ are generic $\pr^1$-bundles over $(X_{\ma{P}_i})^{n-1}_{\text{\it Fano}}$, 
where  $\ma{P}_i\subset \pr^{n-1}$ is as in Remark \ref{projection}.
The general fiber of $\phi_i$ is the strict transform in $X^n_{\text{\it Fano}}$ of a general line in $\pr^n$ through $p_i$.
The general fiber of $\phi'_i$ is the strict transform in $X^n_{\text{\it Fano}}$ of a general rational normal curve  in $\pr^n$ through
 $\ma{P}\smallsetminus \{p_i\}$. 
\end{lemma}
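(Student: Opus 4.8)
The plan is to realize each $\phi_i$ (resp.\ $\phi'_i$) as the rational map $\pi_i$ (resp.\ $\pi'_i$) of Remark~\ref{projection} followed by the anticanonical contraction of the $(n-1)$-dimensional blow-up $Y:=(X_{\ma{P}_i})^{n-1}$, and to read off both the base and the general fiber from this factorization. First I would locate the nef class defining $\phi_i$. By Paragraph~\ref{X_Fano} this is the class $D$ whose image in $\ma{H}$ is the point $\alpha^0$ with $\alpha_i=-\frac12$ and $\alpha_j=0$ for $j\neq i$. Evaluating the functions $H_I$ with $|I|\equiv m\bmod 2$ at $\alpha^0$ gives $m+1$ when $i\notin I$ and $m+2$ when $i\in I$; hence, among the defining inequalities \eqref{Fano_chamber} of $\Delta_{\text{\em Fano}}$, the ones active at $\alpha^0$ are exactly those with $i\notin I$. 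A short linear-algebra computation shows that the normals of these facets span $H^2(X,\R)$, so $\alpha^0$ is a \emph{vertex} of $\Delta_{\text{\em Fano}}$; consequently $D$ spans a ray of $\Nef(X^n_{\text{\em Fano}})$, the contraction $\phi_i=\varphi_{|mD|}$ is a genuine morphism (as $X^n_{\text{\em Fano}}$ is a Mori dream space and $D$ is nef), and its image $W$ has $\rho(W)=1$.

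Next I would identify $D$ as a pullback from $Y$. By Remark~\ref{projection}, the facet $\Delta\cap(\alpha_i=-\frac12)$ is, via $\pi_i$, the $(n+2)$-dimensional demihypercube with cone $(\pi_i)^*\Eff(Y)$, and $\alpha^0$ is visibly its center. By the normalization fixed at the start of Section~\ref{blowup}, applied to the blow-up $Y$ of $\pr^{n-1}$ at $(n-1)+3$ points, the center of this demihypercube corresponds to $\frac14(-K_Y)$. Therefore $D=(\pi_i)^*(-K_Y)$ up to a positive multiple. Since $D$ is pulled back along $\pi_i$ and restricts trivially to the general fiber of $\pi_i$, which is a $\pr^1$, the map $\phi_i=\varphi_{|mD|}$ factors as $\pi_i$ followed by the anticanonical map $\varphi_{|m(-K_Y)|}\colon Y\map Y_{-K_Y}$.

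Finally I would apply the odd-dimensional case of the discussion preceding \eqref{Fano_chamber} to $Y$: since $\dim Y=n-1$ is odd and $-K_Y$ is big and interior to $\Mov(Y)$, the anticanonical map is birational onto $Y_{-K_Y}=(X_{\ma{P}_i})^{n-1}_{\text{\em Fano}}$, a non-$\Q$-factorial variety of Picard number $1$. Thus $W=(X_{\ma{P}_i})^{n-1}_{\text{\em Fano}}$, and $\phi_i$ is a generic $\pr^1$-bundle: a general point $q$ of the base corresponds, by birationality, to a general point $q'\in Y$, and $\phi_i^{-1}(q)$ is the $\pr^1$-fiber $\pi_i^{-1}(q')$, namely the strict transform of a general line through $p_i$. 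The argument for $\phi'_i$ is identical, with $\pi'_i$ and the facet $\Delta\cap(\alpha_i=\frac12)$ in place of $\pi_i$ and $\Delta\cap(\alpha_i=-\frac12)$; its general fiber is then the strict transform of a general rational normal curve through $\ma{P}\smallsetminus\{p_i\}$.

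The main obstacle will be the compatibility of the small modifications involved: $X^n_{\text{\em Fano}}$ and the $\pr^1$-bundle $X_i$ of Remark~\ref{projection} are a priori distinct small $\Q$-factorial modifications of $X$, so I must verify that $D$ is still pulled back from $Y$ on $X^n_{\text{\em Fano}}$ itself and that the factorization of $\phi_i$ through $\pi_i$ is valid there, rather than only on $X_i$. Equivalently, one checks that the $\pr^1$-fibration persists on $X^n_{\text{\em Fano}}$ and that passing to the anticanonical model of the base does not force the general fiber to degenerate.
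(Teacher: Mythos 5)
Your proposal is correct and takes essentially the same route as the paper's proof: the paper likewise takes the class $D$ over the point $\alpha$ with $\alpha_i=-\frac{1}{2}$, $\alpha_j=0$ for $j\neq i$, notes via Remark~\ref{projection} that this point is the center of the demihypercube $\Delta\cap(\alpha_i=-\frac{1}{2})$ whose cone is $(\pi_i)^*\Eff(Y)$, deduces that $D$ is a positive multiple of $(\pi_i)^*(-K_Y)$, and concludes that the image of $\phi_i$ is the Fano model $(X_{\ma{P}_i})^{n-1}_{\text{\it Fano}}$ of $Y$. The additional points you spell out --- the vertex computation at $\alpha^0$, the factorization of $\varphi_{|mD|}$ through $\pi_i$, and the persistence of the general $\pr^1$-fiber across the small modifications (which works since the indeterminacy loci have codimension at least $2$ and the fibers are curves) --- are details the paper leaves implicit, and your handling of them is sound.
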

\begin{proof}
Let $\alpha=(\alpha_1, \dots, \alpha_{n+3})$,
where $\alpha_i = -\frac{1}{2}$  and  $\alpha_j =0$ for $j\neq i$, 
and consider the corresponding fibration  $\phi_i\colon X^n_{\text{\em Fano}}\to X_D$,
where $D$ is an effective divisor such that $\R_{\geq 0}[D]\cap\ma{H}=\alpha$.

Consider the map $\pi_i\colon X\dasharrow Y:=(X_{\ma{P}_i})^{n-1}$ introduced in Remark 
\ref{projection}, and recall that $(\pi_i)^*\Eff(Y)$ is the cone over the $(n+2)$-dimensional demihypercube $\Delta\cap(\alpha_i=-\frac{1}{2})$. The center of this demihypercube is $\alpha$, hence $D$ is a positive multiple of 
 $(\pi_i)^*(-K_Y)$.
So the image $X_D$ of $\phi_i$ is precisely the Fano model $(X_{\ma{P}_i})^{n-1}_{\text{\em Fano}}$ of $Y$.

A similar argument shows the statement for $\phi'_i$.
 \end{proof}
 \begin{parg}
Let $(z,t_1,\dotsc,t_{n+3})$ be new coordinates in  $H^2(X,\R)$, induced by the basis  $\{-K_X,E_1,\dotsc,E_{n+3}\}$. 
These are related to  $(y, x_1,\dots, x_{n+3})$ by  $y=z(n+1)$ and $x_i=t_i-(n-1)z$.
Using the defining inequalities for $\Delta_{\text{\em Fano}}$ in 
\eqref{Fano_chamber}, and the 
expression for the radial projection onto $\ma{H}$ in \eqref{eq:phi}, we conclude that 
$\Nef(X^n_{\text{\em Fano}})\subset H^2(X,\R)$  is defined by the inequalities:
 \stepcounter{thm}
 \begin{equation}\label{fanochamber}
 2z+\left(|I|-m\right)\sum_{i=1}^{n+3}t_i-2\sum_{i\in I}t_i\geq 0
 \end{equation}
 for every $I\subseteq\{1,\dotsc,n+3\}$ such that $|I|\equiv m\mod 2$. 
\end{parg}
\begin{parg}\label{X-->X_Fano}
We end this section by describing the birational map $X\map X^n_{\text{\em Fano}}$.
First notice that to go from the interior of the polytope $\Delta_{\text{\em Nef}}  =  \Nef(X)\cap\ma{H}$ to the interior of the polytope
$\Delta_{\text{\em Fano}} =  \Nef(X^n_{\text{\em Fano}})\cap\ma{H}$,
we must cross the wall $ \big(H_I  =  k  \big)$ for every $I\subset \{1, \dots, n+3\}$
and $3\leq k\leq m+1$ such that $|I|\not\equiv k \mod 2$ and $|I|\leq k-1$.
By Remark~\ref{flip_locus} and \cite[Theorem 2.9]{bauer}, we conclude that the rational map 
$X\map X^n_{\text{\em Fano}}$ factors as:
$$
X=X_0\stackrel{\varphi_1}{\map} X_1\stackrel{\varphi_2}{\map}X_2 \map \cdots \stackrel{\varphi_{m-1}}{\map}X_{m-1}=X^n_{\text{\em Fano}},
$$
where each $\varphi_i\colon X_{i-1}\map X_i$ flips the strict transforms in $X_{i-1}$ of all special subvarieties $J_{I,s}\subset X$
of dimension $i$. These strict transforms are disjoint in $ X_{i-1}$ and each isomorphic to $\pr^i$. 
The flipped locus on $X_i$ is a disjoint union of $\pr^{n-1-i}$'s, one for each $J_{I,s}$ of dimension $i$. Notice that in general the map $\varphi_i$ is not the flip of a small contraction: it is a pseudo-isomorphism that can be factored as a sequence of flips.

In particular, we can describe the $2^{n+2}$  $\pr^{m}$'s in $X^n_{\text{\em Fano}}$ corresponding to the $2^{n+2}$ extremal rays
of $\NE(X^n_{\text{\em Fano}})$. These are the strict transforms of the special subvarieties $J_{I,s}\subset X$ of dimension $m$, and the flipped locus 
of the flips of the strict transforms of the special subvarieties $J_{I,s}\subset X$ of dimension $m-1$. These are, respectively:
{\small\begin{align*}
\text{$m$-dimensional $J_{I,s}$: }
\quad &\sum_{\substack{d=0\\d\not\equiv m\text{ mod } 2}}^{m+1}\binom{n+3}{d}\\
\text{$(m-1)$-dimensional $J_{I,s}$: }
\quad &\sum_{\substack{d=0\\d\equiv m\text{ mod }2}}^{m}\binom{n+3}{d}.
\end{align*}}

We can also describe the strict transforms in $X^n_{\text{\em Fano}}$ of the divisors $\pr^{n-1}\cong E_i\subset X$ under the rational map 
$X\map X^n_{\text{\em Fano}}$.
There are $n+3$ special points $q_1, \dotsc, q_{n+3}\subset E_i$: $q_j$ is the intersection of $E_i$ with the strict transform of the line 
through $p_i$ and $p_j$ when $j\neq  i$, and $q_i$ is the intersection of $E_i$ with the strict transform of  $C$. The points $q_i$'s all lie in a 
rational normal curve $C'$ of degree $n-1$ in $E_i\cong \pr^{n-1}$.
Given a subset $I\subset \{1, \cdots, n+3\}$, with $|I|\leq n-1$, and an  integer $0\leq s\leq \frac{n-1 -|I|}{2}$, we 
denote by  $J^i_{I,s}$ the join $\Jo\big(\langle q_j\rangle_{j\in I}, \Sec_{s-1}(C')\big)\subset E_i$. 
One can check that 
$$
		E_i\cap J_{I,s} \ = \  \left\{ 
		\begin{aligned}
		& J^i_{I\smallsetminus \{i\},s} \ & \text{ if } i\in I, \\
		& \emptyset \ & \text{ if } i\not\in I \ \text { and } s=0, \\
		& J^i_{I\cup\{i\},s-1} \ & \text{ if } i\not\in I \ \text { and } s\geq 1.
		\end{aligned}
		\right.
$$
Therefore, the strict transform of $E_i$ under $\varphi_1$ is the blow-up of $\pr^{n-1}$ at the points $q_1, \dotsc, q_{n+3}$.
For $2\leq j\leq m-1$, the restriction of $\varphi_j$ to the  strict transform of $E_i$ in $X_{j-1}$ flips  the strict transforms of all $J^i_{I,s}$'s
of dimension $j-1$.
\end{parg}
\begin{parg}\label{dim4}
When $n=4$, the birational map $\varphi_1\colon X=X_0\dasharrow X_1=X^4_{\text{\em Fano}}$ flips $J_{\{ij\},0}$ (strict transform of the line $\overline{p_ip_j}\subset\pr^4$)  for  $1\leq i,j\leq 7$, and $J_{\emptyset,1}$ (strict transform of $C\subset\pr^4$); this yields $22$ among the $64$ special $\pr^2$'s in $X^4_{\text{\em Fano}}$, corresponding to the $64$ extremal rays of $\NE(X^4_{\text{\em Fano}})$. The remaining ones are the strict transforms of the $7$ surfaces $\Jo(\langle p_i\rangle,C)$ and of the $35$ planes 
$\langle p_i,p_j,p_h\rangle$ in $\pr^4$.

Notice in particular that $E_i\subset X$ does not contain any special subvariety $J_{I,s}$, while the strict transform of $E_i$ in $X^4_{\text{\em Fano}}$ contains $7$ special 
 $\pr^2$'s, namely the flipped loci of the flips of  $J_{\{ij\},0}$ for $j\neq i$, and of 
$J_{\emptyset,1}$.
\end{parg}


\section{Pseudo-isomorphisms between $G$ and $X$}\label{relation}
\noindent Let $m$ be a positive integer, and set $n=2m$.
Fix $n+3$ distinct points $(\lambda_1:1),\dotsc,(\lambda_{n+3}:1)\in \pr^1$, and
let $p_1, \dotsc, p_{n+3}\in \pr^n$ be their images under a Veronese embedding $\pr^1\hookrightarrow\pr^n$. 
Let $Z$, $G$ and $X$ be the varieties introduced in Sections~\ref{prel}, \ref{Fano} and \ref{blowup}.
We follow the notation introduced in those sections. 
In this section we determine the nef cone of $G$, and then we prove Theorem~\ref{main}, which follows from  Theorem~\ref{SQM}
and Corollary~\ref{blowupmodel}.
Our aim is to  identify the line bundles on $G$ whose linear systems define rational maps $G\dasharrow \pr^n$ 
inducing a pseudo-isomorphism $G\dasharrow X$. 
This is achieved by combining  the description of  $\Nef(G)\subset H^2(G,\R)$ given by Theorem \ref{nefcone},
and the description of $\Nef(X^n_{\text{\it Fano}})\subset H^2(X,\R)$ in terms of the basis $\{-K_X, E_1, \dots, E_{n+3}\}$ for  $H^2(X,\R)$,
which was obtained from the  Mori chamber decomposition of $\Eff(X)$ in Section~\ref{blowup}.

We first  describe the cones $\Nef(G)$ and  $\NE(G)$. 
For  $n=4$, this was proved in \cite[Theorem 4.3]{borcea91}.
\begin{thm}\label{nefcone} \label{NE(G)}
Let the notation be as above. Then 
$$
 \NE(G)=\Cone(\ell_M)_{M\in\ma{F}_m(G)}=\alpha^{-1}(\ma{E}) \ \text{ and }  \ \Nef(G)=\beta(\ma{E}^{\vee}). 
$$
\end{thm}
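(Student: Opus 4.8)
The plan is to prove the two equalities $\NE(G)=\alpha^{-1}(\ma{E})$ and $\Nef(G)=\beta(\ma{E}^{\vee})$, noting that by Proposition~\ref{iso} the isomorphisms $\alpha$ and $\beta$ are dual with respect to the intersection products, so that $\beta(\ma{E}^{\vee})$ is exactly the dual cone of $\alpha^{-1}(\ma{E})$ inside $H^2(G,\R)$. Since $\Nef(G)$ and $\NE(G)$ are dual cones with respect to the perfect pairing $H^2(G,\Z)\times H^{2n-2}(G,\Z)\to\Z$, it suffices to establish a single one of the two identities; the other then follows by taking duals. I would aim to prove $\NE(G)=\alpha^{-1}(\ma{E})=\Cone(\ell_M)_{M\in\ma{F}_m(Z)}$.

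\textbf{One inclusion.} The inclusion $\Cone(\ell_M)_{M\in\ma{F}_m(Z)}\subseteq\NE(G)$ is immediate, since each $\ell_M$ is the class of an actual line in $M^*\subset G$, hence an effective curve class. Applying $\alpha$ and using $\alpha(\ell_M)=M$ gives $\alpha(\Cone(\ell_M)_M)=\Cone(M)_M=\ma{E}$, so $\Cone(\ell_M)_M=\alpha^{-1}(\ma{E})\subseteq\NE(G)$.

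\textbf{The reverse inclusion.} Here is the main content. I would argue that $\NE(G)\subseteq\Cone(\ell_M)_M$, i.e. that every extremal ray of $\NE(G)$ is generated by some $\ell_M$. The tool is the collection of fibrations $\ph_i,\psi_i\colon G\to Y_{\ph_i},Y_{\psi_i}$ constructed in Paragraph before Lemma~\ref{fibertype}. By Corollary~\ref{extremalfaces}, the extremal faces $\NE(\ph_i)$ and $\NE(\psi_i)$ are spanned by the classes $\ell_{M_I}$ lying in them, and by Proposition~\ref{ell_M} every $\ell_M$ generates a genuine extremal ray of $\NE(G)$. The key geometric observation is that, via $\alpha$, these $2(n+3)$ faces correspond (by \eqref{formula}) precisely to the non-simplicial facets of the demihypercube cone $\ma{E}$ cut out by the supporting functionals $\frac{1}{2}\eta\pm\eps_i$. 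Thus the cone generated by all $\ell_M$ has the same facets as $\alpha^{-1}(\ma{E})$ on the "$\pm\eps_i$" side. To capture the remaining, simplicial facets of $\ma{E}$ — those cut out by the $\delta_M$ of Remark~\ref{rem:facets} — I would show that a nef class supporting such a facet yields a contraction of $G$ whose exceptional locus forces the curve classes in that face to be exactly the $\ell_{\sigma_i(M)}$'s. Concretely, $\delta_M$ pulls back (under $\beta$) to a nef divisor on $G$ whose associated contraction sends $M^*\cong\pr^m$ to a point, so the face $\Cone(\ell_{\sigma_i(M)})_{i}$ of $\alpha^{-1}(\ma{E})$ is genuinely extremal in $\NE(G)$. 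Combining the two families of facets shows that $\NE(G)$ and $\alpha^{-1}(\ma{E})$ have the same supporting hyperplanes, hence are equal.

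\textbf{Conclusion and the hard part.} Once $\NE(G)=\alpha^{-1}(\ma{E})$ is proved, I dualize: for $D\in H^2(G,\R)$, $D\in\Nef(G)$ iff $D\cdot\ell_M\geq 0$ for all $M$, iff (writing $D=\beta(y)$ and using Proposition~\ref{iso}) $y\cdot M=\alpha(\ell_M)\cdot y=\ell_M\cdot\beta(y)\geq 0$ for all $M$, iff $y\in\ma{E}^{\vee}$; hence $\Nef(G)=\beta(\ma{E}^{\vee})$. I expect the main obstacle to be the reverse inclusion $\NE(G)\subseteq\alpha^{-1}(\ma{E})$, and specifically verifying that the \emph{simplicial} facets of $\ma{E}$ (the $\delta_M$-facets) really are supported by nef divisors on $G$ rather than merely on the abstract cone $\ma{E}$. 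The fibration argument via $\ph_i,\psi_i$ handles the non-simplicial facets cleanly, but the $\delta_M$-facets require identifying the divisorial contraction of $G$ contracting $M^*$, and checking that its associated extremal face is exactly the claimed $\Cone(\ell_{\sigma_i(M)})_i$; this is where one must exploit both the geometry of $M^*\subset G$ and the precise incidence intersections recorded in \eqref{intersection}.
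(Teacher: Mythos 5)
Your overall skeleton (the easy inclusion $\Cone(\ell_M)_M=\alpha^{-1}(\ma{E})\subseteq\NE(G)$, plus deducing $\Nef(G)=\beta(\ma{E}^{\vee})$ by duality through Proposition~\ref{iso}) agrees with the paper, and your treatment of the $2(n+3)$ non-simplicial facets via the fibrations $\ph_i,\psi_i$ is sound: it is exactly the mechanism behind Proposition~\ref{ell_M}. But there is a genuine gap at the step you yourself flag: the nefness of $\beta(\delta_M)$, equivalently the existence of a contraction of $G$ supporting the simplicial facet $\Cone(\ell_{\sigma_i(M)})_{i=1,\dotsc,n+3}$. You assert this and propose to verify it by ``identifying the divisorial contraction of $G$ contracting $M^*$,'' but this is circular: producing that morphism (in fact a \emph{small} contraction for $m\geq 2$, collapsing the $n+3$ disjoint planes $\sigma_1(M)^*,\dotsc,\sigma_{n+3}(M)^*$ --- not $M^*$, which satisfies $\delta_M\cdot\ell_M>0$) amounts to proving semiampleness of $\beta(\delta_M)$, which is precisely the content you need. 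In the paper this contraction appears only \emph{after} Theorem~\ref{NE(G)}, as a consequence of it (Paragraph~\ref{parg:Nef(G)}); no intrinsic construction is given, and none is supplied in your proposal.

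The paper closes the argument in a completely different way, by counting rather than by exhibiting supporting nef classes for every facet. Proposition~\ref{ell_M} shows each of the $2^{n+2}$ distinct classes $\ell_M$ spans an extremal ray of $\NE(G)$; on the other hand, Theorem~\ref{pseudoisom} (the parabolic-bundles pseudo-isomorphism $G\dasharrow X$) gives $G\cong X^n_{\text{Fano}}$ (Remark~\ref{G=X_Fano}), and the Mori chamber decomposition of $\Eff(X)$ shows $\NE(X^n_{\text{Fano}})$ has \emph{exactly} $2^{n+2}$ extremal rays (Paragraph~\ref{X_Fano}). Since $\NE(G)$ is rational polyhedral (Cone Theorem, $G$ Fano), the rays $\R_{\geq 0}\ell_M$ exhaust all extremal rays, forcing $\NE(G)=\Cone(\ell_M)_M$, and the $\delta_M$-facets come for free. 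Your plan would work if you imported this external input (or found a direct proof that $\beta(\delta_M)$ is semiample, e.g.\ via some explicit morphism on $G$), but as written the reverse inclusion $\NE(G)\subseteq\alpha^{-1}(\ma{E})$ rests on an unproved claim for the $2^{n+2}$ simplicial facets, which is the heart of the matter.
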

\begin{proof}
By Proposition \ref{ell_M}, $\ell_M$ generates an extremal ray of $\NE(G)$ for every $M\in\ma{F}_m(G)$. 
This yields $2^{n+2}$ distinct extremal rays of $\NE(G)$.
On the other hand,  $G\cong X_{\text{\em Fano}}$ by Remark \ref{G=X_Fano}, and 
$\NE(X_{\text{\em Fano}})$ has precisely $2^{n+2}$ extremal rays, as explained in Paragraph \ref{X_Fano}.
So we have: 
$$
\NE(G)=\Cone(\ell_M)_{M\in\ma{F}_m(G)}=\alpha^{-1}(\ma{E}).
$$
The equality $\Nef(G)=\beta(\ma{E}^{\vee})$ follows from the duality between $\Nef(G)$ and $\NE(G)$, and from  Proposition \ref{iso}.
\end{proof}

Similarly, we will show in Proposition~\ref{effdivGMov1G} that 
$\Eff(G)=\beta(\ma{E})$ and $\Mov_1(G)=\alpha^{-1}(\ma{E}^{\vee})$.
So the cones  $\NE(G)$ and $\Eff(G)$ are isomorphic under $\beta\circ\alpha$, and the same holds for $\Mov_1(G)$ and $\Nef(G)$.

\medskip

Recall from Section \ref{Fano} that $E_M=\beta(M)\in H^2(G,\Z)$ for every $M\in\ma{F}_{m}(Z)$.
For each $M\in\ma{F}_{m}(Z)$, consider the linear map
$$
h_{M}\colon H^2(X,\R)\la H^2(G,\R)
$$
defined by:
$$
h_{M}(-K_X)=-K_G\quad\text{and}\quad h_M(E_i)=E_{\sigma_i(M)}\text{ for every }i=1,\dotsc,n+3.
$$
One can check that $h_M$ respects the integral points, namely that it is induced by an isomorphism $H^2(X,\Z)\to H^2(G,\Z)$, 
and that $h_{\sigma_I(M)}=\sigma_I\circ h_M$ for every $I\subseteq\{1,\dotsc,n+3\}$.

We also set 
 \stepcounter{thm}
 \begin{equation}\label{def_h_M_tilde}
\tilde{h}_M:=\beta^{-1}\circ h_M\colon H^2(X,\R)\la H^n(Z,\R),
 \end{equation}
so that $\tilde{h}_{M}(-K_X)=\eta$ and $\tilde{h}_M(E_i)=\sigma_i(M)$ for every $i=1,\dotsc,n+3$.
\begin{lemma}\label{h_M}
For every $M\in\ma{F}_m(Z)$ and $I\subseteq\{1,\dotsc,n+3\}$ of even cardinality, we have:
$$
h_M(E_I)=E_{\sigma_I(M)}, \ h_M(\Eff(X))=\beta(\ma{E}),\ \text{ and }\ h_M\big(\Nef(X_{\text{\it Fano}})\big)=\Nef(G).
$$ 
\end{lemma}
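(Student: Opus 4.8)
The plan is to reduce all three assertions to the auxiliary map $\tilde h_M=\beta^{-1}\circ h_M$, which satisfies $\tilde h_M(-K_X)=\eta$ and $\tilde h_M(E_i)=\sigma_i(M)$. Fixing $M_0=M$ and writing $M_I=\sigma_I(M)$ as in \ref{M_0}, the first assertion $h_M(E_I)=E_{\sigma_I(M)}=\beta(M_I)$ is equivalent, after applying $\beta^{-1}$, to the purely lattice-theoretic identity $\tilde h_M(E_I)=M_I$ for $|I|$ even. To prove it I would start from the expression \eqref{E_I} for $E_I$ in the basis $\{H,E_1,\dots,E_{n+3}\}$, substitute $H=\frac{1}{n+1}\big(-K_X+(n-1)\sum_k E_k\big)$, which comes from $-K_X=(n+1)H-(n-1)\sum_k E_k$, and then apply the linear map $\tilde h_M$, so that $-K_X\mapsto\eta$ and $E_k\mapsto M_k$. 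This yields $\tilde h_M(E_I)$ as an explicit combination of $\eta$ and the $M_k$'s, with $s=\tfrac{n-|I|}{2}=m-\tfrac{|I|}{2}$.

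The one genuinely computational step is to match this combination against the closed formula \eqref{M_I2} for $M_I$. Collecting coefficients gives $\frac{s+1}{n+1}$ on $\eta$, $\frac{-2(s+1)}{n+1}$ on each $M_i$ with $i\in I$, and $\frac{n-1-2s}{n+1}$ on each $M_j$ with $j\notin I$. Using $|I|=n-2s$, these equal respectively $\frac{n+2-|I|}{2(n+1)}$, $\frac{-(n+2-|I|)}{n+1}$ and $\frac{|I|-1}{n+1}$, which are exactly the coefficients appearing in \eqref{M_I2}. Hence $\tilde h_M(E_I)=M_I$ and $h_M(E_I)=E_{\sigma_I(M)}$.

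For the second identity I would use that, by \cite[Theorem 1.2]{CoxCT}, $\Eff(X)$ is the cone on its extremal generators $E_I$ with $|I|$ even, while $\beta(\ma{E})=\Cone(E_{M'})_{M'\in\ma{F}_m(Z)}$ by definition of $\ma{E}$. Since $n+3$ is odd, each pair $\{I,I^c\}$ has a unique representative of even cardinality, so $I\mapsto M_I$ is a bijection from the even subsets onto $\ma{F}_m(Z)$; combined with the first part, $h_M$ carries the extremal generators of $\Eff(X)$ bijectively onto those of $\beta(\ma{E})$. As $h_M$ is a linear isomorphism, $h_M(\Eff(X))=\beta(\ma{E})$.

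Finally, the third identity is immediate once the coordinates are aligned. In the bases $\{-K_X,E_1,\dots,E_{n+3}\}$ of $H^2(X,\R)$ and $\{\eta,M_1,\dots,M_{n+3}\}$ of $H^n(Z,\R)$, the map $\tilde h_M$ is the coordinate identity, sending $(z,t_1,\dots,t_{n+3})$ to $(z,t_1,\dots,t_{n+3})$. The chamber $\Nef(X_{\text{\it Fano}})$ is cut out by the inequalities \eqref{fanochamber}, which coincide verbatim with the inequalities \eqref{ineqG} of Lemma~\ref{dualconeE} defining $\ma{E}^{\vee}$. Therefore $\tilde h_M(\Nef(X_{\text{\it Fano}}))=\ma{E}^{\vee}$, and applying $\beta$ together with Theorem~\ref{NE(G)} gives $h_M(\Nef(X_{\text{\it Fano}}))=\beta(\ma{E}^{\vee})=\Nef(G)$. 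I expect the coefficient bookkeeping in the first step to be the only real obstacle; the other two identities then follow formally from it.
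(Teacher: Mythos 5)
Your proposal is correct and takes essentially the same route as the paper's proof: reduce to $\tilde h_M=\beta^{-1}\circ h_M$, rewrite \eqref{E_I} in the basis $\{-K_X,E_1,\dotsc,E_{n+3}\}$ and match it against \eqref{M_I2} to get $\tilde h_M(E_I)=\sigma_I(M)$, then compare \eqref{fanochamber} with \eqref{ineqG} and invoke Theorem~\ref{nefcone} for the nef cones. You merely make explicit two points the paper leaves implicit, namely the coefficient bookkeeping (which checks out, with $s+1=\tfrac{n+2-|I|}{2}$) and the bijection $I\mapsto\sigma_I(M)$ between even subsets and $\ma{F}_m(Z)$ underlying the claim $h_M(\Eff(X))=\beta(\ma{E})$.
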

\begin{proof}
Let $I\subseteq\{1,\dotsc,n+3\}$ be such that $|I|=n-2s$ is even, $s\geq 0$.
We can rewrite \eqref{E_I} as 
$$
E_I \ = \ \frac{1}{n+1} \Big( (s+1)(-K_X) - 2(s+1)\sum_{i\in I}E_i +(n-1-2s) \sum_{j\in I^{^c}}E_j\Big).
$$
It follows from \eqref{M_I2} that $\tilde{h}_M(E_I)=\sigma_I(M)$, and hence $h_M(E_I)=E_{\sigma_I(M)}$. 
This implies that $h_M(\Eff(X))=\beta(\ma{E})$.

By comparing \eqref{fanochamber} and \eqref{ineqG}, we see that  $\tilde{h}_M\big(\Nef(X^n_{\text{\em Fano}})\big)=\ma{E}^{\vee}$.
Hence, $h_M\big(\Nef(X^n_{\text{\em Fano}})\big)=\beta(\ma{E}^{\vee})=\Nef(G)$ by Theorem~\ref{nefcone}.
\end{proof}
\begin{proposition}\label{pullback}
Let $\xi\colon G\dasharrow X$ be a pseudo-isomorphism, and consider the induced linear map
$$
\xi^*\colon H^2(X,\R)\la H^2(G,\R).
$$
Then, up to  a unique permutation of $E_1,\dotsc,E_{n+3}\subset X$, there exists a unique $M\in\ma{F}_{m}(Z)$ such that  $\xi^*=h_M$. 
\end{proposition}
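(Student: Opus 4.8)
The plan is to exploit the fact that a pseudo-isomorphism $\xi\colon G\dasharrow X$ induces an isomorphism $\xi^*\colon H^2(X,\R)\to H^2(G,\R)$ that identifies the two relevant cones and their distinguished generators. Since $\xi$ is an isomorphism in codimension one, $\xi^*$ carries $\Eff(X)$ isomorphically onto $\Eff(G)$, sends $\Nef(X)$ into $\Eff(G)$, and more importantly identifies the nef cone of the target of $\xi$ with $\Nef(G)$. Because $G\cong X^n_{\mathrm{Fano}}$ by Remark~\ref{G=X_Fano}, and any target of a pseudo-isomorphism from $X$ corresponds to a maximal chamber in $\Mov(X)$, the key point is that $\xi^*$ must match $\Nef(G)=\beta(\ma{E}^\vee)$ with the pullback $\Nef(G)$ regarded inside $\Eff(G)$, while matching the $n+3$ extremal generators $E_1,\dotsc,E_{n+3}$ of $\Eff(X)$ (the rays corresponding to the $\pr^1$-bundle structures, i.e.\ the divisorial primitive rays) to $n+3$ among the divisors $E_{M'}$, $M'\in\ma{F}_m(Z)$.

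First I would set up the comparison of cones. By Lemma~\ref{h_M}, for each choice of $M\in\ma{F}_m(Z)$ the linear map $h_M$ sends $\Eff(X)$ onto $\beta(\ma{E})=\Eff(G)$, sends $\Nef(X^n_{\mathrm{Fano}})$ onto $\Nef(G)$, and sends each $E_i$ to $E_{\sigma_i(M)}$. Since $h_{\sigma_I(M)}=\sigma_I\circ h_M$ and $W'$ acts simply transitively on $\ma{F}_m(Z)$, the collection $\{h_{M}\}_{M\in\ma{F}_m(Z)}$ gives $2^{n+2}$ distinct such cone isomorphisms. The task is to show that $\xi^*$, after permuting the $E_i$'s, equals exactly one of these. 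The natural strategy is to reduce to a statement about the symmetry group of the cone $\ma{E}$, where Lemma~\ref{linear} applies.

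Concretely, I would argue as follows. The composite $f:=\beta^{-1}\circ\xi^*\colon H^2(X,\R)\to H^n(Z,\R)$ carries $\Eff(X)$ onto $\ma{E}$ (since $\xi^*\bigl(\Eff(X)\bigr)=\Eff(G)=\beta(\ma{E})$) and carries $\Nef$ of the target onto $\ma{E}^\vee$ (since $\Nef(G)=\beta(\ma{E}^\vee)$ and $\xi^*$ identifies the nef cone of $G\cong X^n_{\mathrm{Fano}}$). Moreover $f(-K_X)=\eta$, because $-K_X$ is pulled back to $-K_G=\beta(\eta)$ under the pseudo-isomorphism, which preserves the anticanonical class. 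Now fix any $M\in\ma{F}_m(Z)$ and consider $g:=f\circ\tilde h_M^{-1}$ (equivalently $g:=\beta^{-1}\circ\xi^*\circ h_M^{-1}\circ\beta$, viewed as an endomorphism of $H^n(Z,\R)$). By Lemma~\ref{h_M}, $\tilde h_M$ already realizes $\ma{E}\mapsto\ma{E}$-type data, so $g$ satisfies $g(\ma{E}^\vee)=\ma{E}^\vee$ and $g(\eta)=\eta$; hence by the implication $(ii)\Rightarrow(iii)$ of Lemma~\ref{linear}, $g\in W(D_{n+3})$. Writing $W(D_{n+3})=W'\rtimes G_0$ with $G_0$ the stabilizer of a chosen $m$-plane, I decompose $g=\sigma_I\circ\omega$ with $\sigma_I\in W'$ and $\omega\in G_0\cong S_{n+3}$ a permutation. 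Absorbing $\sigma_I$ into the choice of $M$ (using $h_{\sigma_I(M)}=\sigma_I\circ h_M$) and absorbing $\omega$ into a permutation of the $E_i$'s (using that $G_0$ permutes the $M_i=\sigma_i(M_0)$ exactly as it permutes the $E_i$ under $h_M$), I conclude that $\xi^*=h_M$ for a suitable $M$ after a unique relabeling of $E_1,\dotsc,E_{n+3}$.

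The main obstacle, and the step needing the most care, is the \emph{uniqueness} assertion: that both $M$ and the permutation are uniquely determined. For this I would invoke the rigidity built into Lemma~\ref{linear}, whose proof shows that an endomorphism fixing $\eta$, preserving $\ma{E}^\vee$, and fixing each $M_i$ must be the identity; this forces the $W(D_{n+3})$-element $g$ to be \emph{unique}, and via the semidirect-product decomposition $W(D_{n+3})=W'\rtimes G_0\cong(\Z/2\Z)^{n+2}\rtimes S_{n+3}$ the ambiguity splits cleanly into the choice of $M\in\ma{F}_m(Z)$ (the $W'$-part) and the choice of permutation of the $E_i$'s (the $G_0\cong S_{n+3}$-part). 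The only subtlety is verifying that $\xi^*$ genuinely sends the nef cone of the target of $\xi$ to $\Nef(G)$ and fixes $-K$; once those two normalizations are in place, the group-theoretic skeleton above yields both existence and uniqueness simultaneously.
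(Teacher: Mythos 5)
Your proposal is correct and follows essentially the same route as the paper's proof: both normalize via $\xi^*(-K_X)=-K_G$ and $\xi^*\bigl(\Nef(X^n_{\text{\em Fano}})\bigr)=\Nef(G)$, transport $\xi^*\circ (h_{M})^{-1}$ through $\beta$ to an endomorphism of $H^n(Z,\R)$ fixing $\eta$ and preserving $\ma{E}^{\vee}$, apply Lemma~\ref{linear} to land in $W(D_{n+3})$, and then use the unique decomposition $W(D_{n+3})=W'\rtimes G_0$ to absorb the $W'$-part into the choice of $M$ and the $G_0\cong S_{n+3}$-part into the permutation of $E_1,\dotsc,E_{n+3}$. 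The paper's argument is exactly this, with uniqueness likewise read off from the uniqueness of the semidirect-product factorization.
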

\begin{proof}
We have $\xi^*(-K_X)=-K_G$, and hence $\xi^*\big(\Nef(X^n_{\text{\em Fano}})\big)=\Nef(G)$.

We fix $M_{0}\in \ma{F}_{m}(Z)$ and follow the notation introduced in \ref{M_0}.
Consider $\xi^*\circ (h_{M_0})^{-1}\colon H^2(G,\R)\to H^2(G,\R)$. 
By Lemma~\ref{h_M}, this map fixes $-K_G$ and sends $\Nef(G)$ to itself. 
Using the isomorphism $\beta\colon H^n(Z,\R)\to H^2(G,\R)$ and Theorem~\ref{nefcone}, 
we obtain a linear map $f\colon H^n(Z,\R)\to H^n(Z,\R)$ such that $f(\eta)=\eta$ and $f(\ma{E}^{\vee})=\ma{E}^{\vee}$: 
$$
\xymatrix{&{H^2(X,\R)}\ar[dl]_{h_{M_0}}\ar[dr]^{\xi^*}&\\
{H^2(G,\R)}\ar[rr]^{\xi^*\circ (h_{M_0})^{-1}}&&{H^2(G,\R)}\\
{H^n(Z,\R)}\ar[u]^{\beta}\ar[rr]^f&&{H^n(Z,\R).}\ar[u]_{\beta}
}
$$
By Lemma~\ref{linear}, we have $f\in W(D_{n+3})$.

Consider the stabiliser $G_0\subset W(D_{n+3})$ of $M_0$, and recall that $W(D_{n+3})=W'\rtimes G_0$ and $G_0\cong S_{n+3}$. 
Thus there are uniquely defined $\omega\in G_0$, $\sigma_I\in W'$ and $\kappa\in S_{n+3}$ such that 
 $f=\sigma_I\circ\omega$ and $\omega(M_i)=M_{\kappa(i)}$ for every $i=1,\dotsc,n+3$.
Since $\beta$ is $W'$-equivariant, this means that $$\xi^*(E_i)=\beta(f(M_i))=\beta(\sigma_{\kappa(i)}(M_I))=\sigma_{\kappa(i)}
(\beta(M_I))=\sigma_{\kappa(i)}(E_{M_I})$$ for every $i=1,\dotsc,n+3$.
Apply the permutation $\kappa^{-1}$ to $E_1,\dotsc,E_{n+3}\subset X$. 
After this reordering, we get $f=\sigma_I\in W'$ and $\xi^*=\sigma_I\circ h_{M_0} =h_{M_I}$. 
\end{proof}

From now on we order the divisors $E_1,\dotsc,E_{n+3}\subset X$, and correspondingly the points $p_1,\dotsc,p_{n+3}\in\pr^n$, as in Proposition \ref{pullback}. 
At this point we can determine the cone of effective divisors and the cone of moving curves of $G$.
\begin{proposition}\label{effdivGMov1G} 
For every $M\in\ma{F}_m(Z)$, there is a unique effective divisor in $G$ with class $E_M\in H^2(G,\Z)$.
This is a fixed prime divisor which we still denote by $E_M\subset G$.
We have:
$$
\Eff(G)=\beta(\ma{E})=\Cone( E_M)_{M\in\ma{F}_{m}(Z)} \ \text{ and } \ \Mov_1(G)=\alpha^{-1}(\ma{E}^{\vee}).
$$
\end{proposition}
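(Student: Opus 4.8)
The plan is to establish the three claimed statements by leveraging the pseudo-isomorphism $G\cong X^n_{\text{Fano}}$ together with the structural results already proved for $X$. First I would fix $M\in\ma{F}_m(Z)$ and, using Proposition~\ref{pullback}, choose a pseudo-isomorphism $\xi\colon G\dasharrow X$ with $\xi^*=h_M$ (after reordering the $p_i$'s as arranged following Proposition~\ref{pullback}). Since $\xi$ identifies $G$ with the Fano model $X^n_{\text{Fano}}$, and since $E_M=\beta(M)=h_M(E_{\emptyset})$ by Lemma~\ref{h_M} (taking $I=\emptyset$, so that $\sigma_\emptyset(M)=M$), the class $E_M$ on $G$ corresponds to the class $E_{\emptyset}\in H^2(X,\Z)$, the strict transform of the secant variety $\Sec_{m-1}(C)\subset\pr^n$. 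More generally, for $I$ of even cardinality, $E_{\sigma_I(M)}=h_M(E_I)$ corresponds to the extremal generator $E_I$ of $\Eff(X)$.

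Next I would prove the effective cone equality. The effective cones are preserved under pseudo-isomorphism, so $\xi^*(\Eff(X))=\Eff(G)$; combined with $h_M(\Eff(X))=\beta(\ma{E})$ from Lemma~\ref{h_M}, this gives $\Eff(G)=\beta(\ma{E})=\Cone(E_M)_{M\in\ma{F}_m(Z)}$, since $\ma{E}=\Cone(M)_{M\in\ma{F}_m(Z)}$ and $\beta(M)=E_M$. For the uniqueness and primality claim, I would transport the corresponding fact on the $X$ side: by \cite[Theorem~1.2]{CoxCT} each $E_I$ generates an extremal ray of $\Eff(X)$ and is the class of a unique prime (rigid) divisor. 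Since $\xi$ is an isomorphism in codimension one, it induces a bijection between prime divisors of $X$ and prime divisors of $G$ that preserves linear equivalence classes, so each $E_M\in H^2(G,\Z)$ is likewise the class of a unique effective divisor, which is prime and fixed (rigid). Here I should be careful that rigidity is a birational-invariant-in-codimension-one property: an effective divisor class on the boundary extremal ray of $\Eff$ has a unique effective representative, and this is preserved because $\xi_*$ and $\xi^*$ are mutually inverse on divisor classes and neither contracts nor extracts divisors.

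Finally I would dualize to obtain $\Mov_1(G)=\alpha^{-1}(\ma{E}^{\vee})$. The cone $\Mov_1(G)$ of movable curves is by definition the dual of $\Eff(G)$ under the intersection pairing $H^{2n-2}(G,\R)\times H^2(G,\R)\to\R$. By Proposition~\ref{iso}, the isomorphisms $\alpha$ and $\beta$ are dual with respect to the intersection products on $H^n(Z,\R)$, meaning $x\cdot\beta(y)=\alpha(x)\cdot y$. Since $\Eff(G)=\beta(\ma{E})$, its dual is exactly $\{x\mid \alpha(x)\cdot y\geq 0 \text{ for all } y\in\ma{E}\}=\alpha^{-1}(\ma{E}^\vee)$, giving the claim. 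This mirrors the already-proved pair $\Nef(G)=\beta(\ma{E}^\vee)$ and $\NE(G)=\alpha^{-1}(\ma{E})$ from Theorem~\ref{NE(G)}, with the roles of $\ma{E}$ and $\ma{E}^\vee$ exchanged.

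The main obstacle I anticipate is the uniqueness-and-primality step rather than the cone identities, which are essentially formal consequences of Lemma~\ref{h_M} and the duality in Proposition~\ref{iso}. Establishing that $E_M$ has a \emph{unique} effective representative, and that this representative is \emph{prime}, requires carefully transporting the rigidity of the extremal generators $E_I\subset X$ across the pseudo-isomorphism $\xi$; one must verify that the codimension-one identification genuinely matches effective divisors and their multiplicities, and invoke that extremal (and in this case rigid) effective classes behave well under small modifications. The geometric identifications in the bulleted conclusions of Theorem~\ref{main} then follow by reading off, through $\xi$, the descriptions of the $E_I$ and the contracted loci from Paragraph~\ref{X-->X_Fano} and Remark~\ref{flip_locus}.
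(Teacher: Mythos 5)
Your proposal is correct and follows essentially the same route as the paper's own proof: a pseudo-isomorphism $\xi\colon G\dasharrow X$ from Theorem~\ref{pseudoisom} with $\xi^*=h_M$ via Proposition~\ref{pullback}, Lemma~\ref{h_M} to get $\Eff(G)=\xi^*\Eff(X)=\beta(\ma{E})$ and to transport the rigid prime divisors $E_I\subset X$ to unique effective representatives of the classes $E_{\sigma_I(M)}$ on $G$, and the duality $\Mov_1(G)=\Eff(G)^{\vee}$ combined with Proposition~\ref{iso} for the last equality. The only (harmless) difference is the order of quantifiers: Proposition~\ref{pullback} produces $M$ from a given $\xi$ rather than letting you prescribe $M$ in advance, but since $h_{\sigma_I(M)}=\sigma_I\circ h_M$ and $\sigma_I(M)$ runs over all of $\ma{F}_m(Z)$ as $I$ ranges over even-cardinality subsets, your argument covers every class exactly as the paper's does.
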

\begin{proof}
By Theorem~\ref{pseudoisom}, there exists a pseudo-isomorphism $\xi\colon G\dasharrow X$. 
By Proposition~\ref{pullback}, there exists $M\in\ma{F}_{m}(Z)$ such that  $\xi^*=h_M$. In particular,  for every $I\subset\{1,\dotsc,n+3\}$ with $|I|$ even, we have $\xi^*(E_I)=E_{\sigma_I(M)}$ by Lemma~\ref{h_M}.
Thus the strict transform in $G$ of $E_I\subset X$ is a fixed prime divisor, and it is the unique effective divisor with class $E_{\sigma_I(M)}$.
It also follows from Lemma~\ref{h_M} that 
$$
\Eff(G)=\xi^*\Eff(X)=\beta(\ma{E})=\Cone (E_M)_{M\in\ma{F}_{m}(Z)}.
$$ 

The equality $\Mov_1(G)=\alpha^{-1}(\ma{E}^{\vee})$ follows from the duality $\Mov_1(G)=\Eff(G)^{\vee}$ and from Proposition \ref{iso}.
\end{proof}

For each $M\in\ma{F}_{m}(Z)$, we set
\stepcounter{thm}
\begin{equation}\label{H_M}
\begin{split}
H_M \ :=& \ h_M(H)\ = \ \frac{1}{n+1}\left(-K_G+(n-1)\sum_{i=1}^{n+3}E_{\sigma_i(M)}\right) \\
       =& \ m(-K_G)-(n-1)E_M \ \in \ H^2(G,\Z),
\end{split}
\end{equation}
where the last equality follows from \eqref{M_I2} (taking $M=M_0$ and $I=\emptyset$), 
using the isomorphism $\beta\colon H^n(Z,\R)\to H^2(G,\R)$. 
\begin{thm}\label{SQM}
For every $M\in\ma{F}_{m}(Z)$, the divisor class $H_M$ is movable, and its complete linear system 
defines a birational map 
$$
\rho_M\colon G\dasharrow\pr^n,
$$ 
with exceptional divisors $E_{\sigma_1(M)},\dotsc,E_{\sigma_{n+3}(M)}$,
inducing a pseudo-isomorphism 
$$
\xi_M\colon G\dasharrow X
$$ 
whose induced map $\xi_M^*\colon H^2(X,\R)\to H^2(G,\R)$ coincides with $h_M$.

For every $I\subseteq\{1,\dotsc,n+3\}$, $\rho_{\sigma_I(M)}=\rho_M\circ\sigma_I$ and $\xi_{\sigma_I(M)}=\xi_M\circ\sigma_I$.
\end{thm}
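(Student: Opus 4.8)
The plan is to assemble this statement from the machinery already established, rather than prove it from scratch. By Theorem~\ref{pseudoisom} there exists \emph{some} pseudo-isomorphism $G\dasharrow X$, and by Proposition~\ref{pullback} its induced map on $H^2$ equals $h_{M'}$ for some $M'\in\ma{F}_m(Z)$, after a unique reordering of the $E_i$'s. Having fixed that ordering (as the paragraph preceding the theorem does), I would first produce, for an \emph{arbitrary} $M\in\ma{F}_m(Z)$, a pseudo-isomorphism $\xi_M\colon G\dasharrow X$ with $\xi_M^*=h_M$. The natural way is to start from the one pseudo-isomorphism $\xi_{M'}$ we have and precompose with the involution $\sigma_I\colon G\to G$, where $I$ is chosen so that $\sigma_I(M')=M$ (possible since $W'$ acts transitively on $\ma{F}_m(Z)$ by \ref{F_m(Z)}). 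Since $\sigma_I$ is an automorphism of $G$ and of $X$, the composite $\xi_{M'}\circ\sigma_I$ is again a pseudo-isomorphism, and its pullback is $\sigma_I\circ h_{M'}=h_{\sigma_I(M')}=h_M$ by the stated identity $h_{\sigma_I(M)}=\sigma_I\circ h_M$. This simultaneously establishes the final sentence, $\xi_{\sigma_I(M)}=\xi_M\circ\sigma_I$, which is really just the naturality of this construction.

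Next I would identify the divisor class pulled back from a hyperplane. By definition $H_M=h_M(H)=\xi_M^*(H)$, so $H_M$ is the strict transform on $G$ of the pullback to $X$ of $\ma{O}_{\pr^n}(1)$. Since $H$ is movable on $X$ (it is base-point-free there) and $\xi_M$ is an isomorphism in codimension one, $H_M$ is movable on $G$, and its complete linear system computes the same rational map as $|H|$ does on $X$ composed with $\xi_M^{-1}$. Concretely, the linear system $|H|$ on $X$ defines the blow-down $X\to\pr^n$, so $|H_M|$ defines $\rho_M:=(\text{blow-down})\circ\xi_M\colon G\dasharrow\pr^n$, which is birational because the blow-down is. The exceptional divisors of $\rho_M$ are exactly the strict transforms under $\xi_M$ of the exceptional divisors $E_i$ of the blow-down, namely $\xi_M^{-1}$ carries $E_i$ to the divisor with class $h_M(E_i)=E_{\sigma_i(M)}$; by Proposition~\ref{effdivGMov1G} each $E_{\sigma_i(M)}$ is a genuine prime divisor on $G$, so these are the $n+3$ exceptional divisors $E_{\sigma_1(M)},\dots,E_{\sigma_{n+3}(M)}$ as claimed.

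The compatibility $\rho_{\sigma_I(M)}=\rho_M\circ\sigma_I$ then follows formally: since $\xi_{\sigma_I(M)}=\xi_M\circ\sigma_I$ and $\rho_{\bullet}$ is $\xi_{\bullet}$ post-composed with the (fixed) blow-down $X\to\pr^n$, we get $\rho_{\sigma_I(M)}=(\text{blow-down})\circ\xi_M\circ\sigma_I=\rho_M\circ\sigma_I$. I would close by checking the two displayed expressions for $H_M$ in \eqref{H_M} against each other: the first is the definition $h_M(H)=\frac{1}{n+1}(-K_G+(n-1)\sum_i E_{\sigma_i(M)})$ coming from $H=\frac{1}{n+1}(-K_X+(n-1)\sum E_i)$, and the equality with $m(-K_G)-(n-1)E_M$ is the image under $\beta$ of identity \eqref{M_I2} applied with $I=\emptyset$, as indicated.

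I expect the genuinely non-formal content to be the claim that the resulting $\rho_M$ is \emph{birational} with \emph{precisely} those $n+3$ exceptional divisors, rather than merely a dominant rational map. The subtlety is that a pseudo-isomorphism $\xi_M$ followed by a divisorial contraction could in principle behave unexpectedly along the indeterminacy loci; one must be sure that $\xi_M$ does not contract any divisor (it does not, being an isomorphism in codimension one) so that the divisors contracted by $\rho_M$ are exactly the proper transforms of those contracted by $X\to\pr^n$. Everything else — movability of $H_M$, the two formulas for $H_M$, and the $\sigma_I$-equivariance — reduces to transporting statements across the isomorphisms $\beta$, $h_M$ and the $W'$-equivariance already recorded, together with the identity \eqref{M_I2}.
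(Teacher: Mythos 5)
Your proof is correct and follows essentially the same route as the paper's: take the one pseudo-isomorphism furnished by Theorem~\ref{pseudoisom}, identify its pullback as $h_{M_0}$ via Proposition~\ref{pullback} (whence $\rho^*\ma{O}_{\pr^n}(1)=H_{M_0}$ gives movability and the complete linear system claim), and transport to arbitrary $M$ by precomposing with $\sigma_I\in W'$, using $h_{\sigma_I(M)}=\sigma_I\circ h_M$ --- the paper phrases this last step as $\sigma_I$ carrying $H_{M_0}$ to $H_{M_I}$, which is the same computation. One cosmetic slip: $\sigma_I$ is an automorphism of $G$ but only a pseudo-automorphism of $X$ (cf.\ Remark~\ref{rem_Dol}), though this is harmless since your composition $\xi_{M'}\circ\sigma_I$ only uses $\sigma_I$ on the $G$ side.
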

\begin{proof}
By Theorem~\ref{pseudoisom}, there exists a pseudo-isomorphism $\xi\colon G\dasharrow X$. 
Let $\rho\colon G\dasharrow \pr^n$ be the composition of $\xi$ with the blow-up morphism $X\to\pr^n$. 
 
By Proposition~\ref{pullback}, 
there exists $M_0\in\ma{F}_m(Z)$ such that $\xi^*=h_{M_0}$. 
This implies that $\rho^*(\ma{O}_{\pr^n}(1))=H_{M_0}$. 
Hence the class $H_{M_0}$ is movable, and $H^0(G,H_{M_0})\cong H^0(\pr^n,\ma{O}_{\pr^n}(1))$.
This proves the first statement for $M=M_0$, with $\rho_{M_0}=\rho$ and $\xi_{M_0}=\xi$.

Let $I\subseteq\{1,\dotsc,n+3\}$. We use the notation introduced in \ref{M_0}.
The automorphism $\sigma_I\colon G\to G$ fixes $-K_G$ and maps $E_{M_0}$ to $E_{M_I}$, hence it maps $H_{M_0}$ to $H_{M_I}$. 
This yields the first statement for $M=M_I$, with $\rho_{M_I}=\rho\circ\sigma_I$ and $\xi_{M_I}=\xi\circ\sigma_I$.

The last statement is clear.
\end{proof}
\begin{corollary}\label{blowupmodel}
Let $\w{X}$ be any blow-up of $\pr^n$ at $n+3$ points. If $\w{X}$ is pseudo-isomorphic to $G$, 
then $\w{X}$ is isomorphic to $X$. 
\end{corollary}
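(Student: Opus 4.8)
The plan is to leverage the rigidity of the Mori chamber decomposition together with the classification of pseudo-isomorphisms already established in Proposition~\ref{pullback} and Theorem~\ref{SQM}. The corollary asserts that the abstract isomorphism type of the source variety $G$ forces the point configuration on the target $\pr^n$ to agree with that defining $X$. Since $X$ is the blow-up of $\pr^n$ at $n+3$ points in general linear position (the Veronese image $p_1,\dotsc,p_{n+3}$), I must show that any other blow-up $\w{X}$ of $\pr^n$ at $n+3$ points, pseudo-isomorphic to $G$, is actually isomorphic to $X$.

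\textbf{First I would} record that a pseudo-isomorphism $\w{X}\dasharrow G$ exists by hypothesis, and compose it with a fixed pseudo-isomorphism $G\dasharrow X$ coming from Theorem~\ref{pseudoisom} to produce a pseudo-isomorphism $\zeta\colon\w{X}\dasharrow X$. The heart of the matter is that $\w{X}$ and $X$ are then two blow-ups of $\pr^n$ at $n+3$ points that are isomorphic in codimension one. The key structural input is that $X$ is a Mori dream space whose effective cone and its Mori chamber decomposition are completely described in Section~\ref{blowup}: the chamber decomposition encodes \emph{all} varieties pseudo-isomorphic to $X$. In particular, the nef cone $\Nef(\w{X})$ must appear as one of the maximal chambers inside $\Mov(X)$, and the exceptional divisors of $\w{X}\to\pr^n$ must be among the extremal generators $E_I$ of $\Eff(X)$.

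\textbf{The essential step} is to show that the $n+3$ exceptional divisors of the blow-up $\w{X}\to\pr^n$ correspond, under $\zeta$, precisely to a set of $n+3$ classes among the $E_I$ that can simultaneously be contracted to distinct points of $\pr^n$ by a birational morphism $\w{X}\to\pr^n$; by the description of the wall-crossings in Section~\ref{blowup}, the only extremal rays of $\Eff(X)$ that are exceptional divisors of a blow-down of a small modification of $X$ to $\pr^n$ are exactly the $n+3$ classes $E_{\{i\}^c}=E_i$ (up to the action of the Weyl group symmetry, i.e.\ up to relabelling and the involutions). This pins down, up to projective automorphism of $\pr^n$ and permutation of the points, that the image configuration is projectively equivalent to $\{p_1,\dotsc,p_{n+3}\}$. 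The general-linear-position configuration defining $X$ is therefore recovered, giving $\w{X}\cong X$.

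\textbf{The hard part will be} ruling out the possibility that $\zeta$ matches the exceptional divisors of $\w{X}$ with a set of $E_I$-classes that blow down to a configuration \emph{not} projectively equivalent to the Veronese points $p_i$ --- i.e.\ showing there is no ``exotic'' blow-down realizing a different point set. Here I would invoke Theorem~\ref{SQM} and Proposition~\ref{pullback}: any pseudo-isomorphism $G\dasharrow\w X$ is, after composing with $G\dasharrow X$, governed by a linear map lying in $W(D_{n+3})$ (via Lemma~\ref{linear}), so the only freedom is relabelling the points and applying the involutions $\sigma_i$, both of which preserve the projective equivalence class of the configuration. This shows the induced birational map $\w X\dasharrow X$ is compatible with the two blow-down structures in a way forcing the two point sets to be projectively equivalent, and hence $\w X\cong X$. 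I expect the only genuine subtlety to be the careful bookkeeping that the Weyl-group action on $\Eff(X)$ does not produce a spurious second contraction to a different $\pr^n$-configuration, which is controlled precisely by the rigidity of $\MCD(X)$ and the explicit form of the maps $\rho_M$ in Theorem~\ref{SQM}.
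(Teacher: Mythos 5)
Your overall strategy is the paper's: compose the pseudo-isomorphism with the blow-down to get $\w\rho\colon G\dasharrow\pr^n$, pin down the possible sets of exceptional divisors by the rigidity of $\Eff(G)$, and conclude via the maps $\rho_M$ of Theorem~\ref{SQM}. But the way you resolve your ``hard part'' is circular as stated. Proposition~\ref{pullback} and Lemma~\ref{linear} cannot be applied to a pseudo-isomorphism involving $\w{X}$: both presuppose that one already knows the relevant cone structure on the target --- concretely, that $\Eff(\w{X})$, written in the blow-up basis $\{\w{H},\w{E}_1,\dotsc,\w{E}_{n+3}\}$, is the cone over the demihypercube with $-K_{\w{X}}$ at its center. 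For a blow-up at $n+3$ points in possibly \emph{special} position this is exactly what is at stake (if the points degenerated, e.g.\ onto a hyperplane, that cone would acquire extra extremal classes), so you cannot invoke $W(D_{n+3})$-rigidity on $\w{X}\dasharrow X$ or $G\dasharrow\w{X}$ before knowing $\w{X}\cong X$. The same concern undercuts the claim that $\Nef(\w{X})$ sits in $\MCD(X)$ being usable to identify the contracted classes ``up to Weyl symmetry'' without further argument.

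The non-circular substitute, which the paper uses and which your ``essential step'' gestures at but does not prove, needs no MCD wall-crossing at all: the $n+3$ exceptional prime divisors of $\w\rho$ span a \emph{simplicial facet} of $\Eff(G)$, because the push-forward functional $D\mapsto \w\rho_*D$ is nonnegative on $\Eff(G)$ and an effective divisor with zero push-forward has all components $\w\rho$-exceptional; and by Proposition~\ref{effdivGMov1G} together with Remark~\ref{rem:facets}, every simplicial facet of $\Eff(G)=\beta(\ma{E})$ is of the form $\Cone\big(E_{\sigma_1(M)},\dotsc,E_{\sigma_{n+3}(M)}\big)$ for some $M\in\ma{F}_m(Z)$. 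Your sketch also elides the step that converts matching \emph{classes} into matching \emph{maps}: since each $E_{\sigma_i(M)}$ is the unique effective divisor in its linear system (Proposition~\ref{effdivGMov1G}), $\w\rho$ and $\rho_M$ have the same exceptional prime divisors, whence $\w\rho^*\ma{O}_{\pr^n}(1)=\frac{1}{n+1}\big(-K_G+(n-1)\sum_i E_{\sigma_i(M)}\big)=H_M$; as $h^0(G,H_M)=n+1$ by (the proof of) Theorem~\ref{SQM}, both maps are given by the complete system $|H_M|$ and so differ by an automorphism of $\pr^n$. Only then do the two point configurations become projectively equivalent, giving $\w{X}\cong X$; without the uniqueness-in-its-class and completeness arguments, equality of divisor classes alone does not force projective equivalence of the configurations.
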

\begin{proof}
Let  $\w{\xi}\colon G\dasharrow \w{X}$ be a pseudo-isomorphism, 
and let $\w{\rho}\colon G\dasharrow \pr^n$ be the composition of $\w{\xi}$ with the blow-up morphism $\w{X}\to\pr^n$. 
Then $\w{\rho}$ has $n+3$ exceptional prime divisors, whose classes must generate a simplicial facet of $\Eff(G)$. 
By Proposition~\ref{effdivGMov1G} and the description of the facets of $\ma{E}$ in Remark~\ref{rem:facets}, 
every simplicial facet of $\Eff(G)$ is generated by $E_{\sigma_1(M)},\dotsc,E_{\sigma_{n+3}(M)}$ for some $M\in\ma{F}_m(Z)$. 
Since each $E_{\sigma_i(M)}$ is unique in its linear system, 
$\w{\rho}\colon G\dasharrow\pr^n$ and $\rho_{M}\colon G\dasharrow \pr^n$ have the same exceptional divisors. 
This means that $\w{\rho}$ and $\rho_{M}$ coincide up to a projective transformation of $\pr^n$, and therefore $\w{X}\cong X$.
\end{proof}
\begin{remark}[Comparing the intersection product in $H^n(Z,\Z)$ with  Dolgachev's pairing on $H^2(X,\Z)$]
In \cite{dolgachev}, Dolgachev defined a non-degenerate symmetric bilinear form $(\ , \  )$ on $H^2(X,\Z)$,  
by imposing that the basis $H,E_1,\dotsc,E_{n+3}$ is orthogonal, 
$$(H,H)=n-1\quad\text{and}\quad (E_i,E_i)=-1\text{ for all }i=1,\dotsc,n+3.$$
This pairing has signature $(1,n+3)$, and $(-K_X,-K_X)=4(n-1)$.
Consider $\tilde\eps_i\in H^2(X,\R)$ defined in \eqref{epstilde}:
$$\tilde\eps_i:=\frac{1}{2}\Big(H-\sum_{j\neq i}E_j+E_i\Big)\quad\text{for }
i=1,\dotsc,n+3.$$
Then we have
$$
(-K_X,\tilde{\eps}_i)=0\ \text{ and }\ (\tilde{\eps}_i,\tilde{\eps}_j)=-\delta_{ij}\quad \text{for every }i,j=1,\dotsc,n+3,
$$
thus $-K_X,\tilde\eps_1,\dotsc,\tilde\eps_{n+3}$ is another orthogonal basis for $H^2(X,\R)$.

Fix $M_0\in\ma{F}_m(Z)$, and consider the orthogonal basis $\eta,\eps_1,\dotsc,\eps_{n+3}$ for $H^n(Z,\R)$ introduced in \eqref{eps}. 
Recall that $\eta^2=4$ and $\eps_i^2=(-1)^m$ for every $i=1,\dotsc,n+3$.
Consider the isomorphism introduced in \eqref{def_h_M_tilde}:
$$
\tilde{h}_{M_0}\colon H^2(X,\R)\to H^n(Z,\R).
$$
From \eqref{H_M} and \eqref{eps_i} we have $\tilde{h}_{M_0}(\tilde{\eps}_i)=\eps_i$ for every $i=1,\dotsc,n+3$.
Therefore $\tilde{h}_{M_0}$ maps an orthogonal basis for Dolgachev's pairing in $H^2(X,\R)$, to an
orthogonal basis for the intersection product in $H^n(Z,\R)$.
In particular $\tilde{h}_{M_0}$ sends the $D_{n+3}$-lattice $(-K_X)^{\perp}\subset H^2(X,\Z)$ to the $D_{n+3}$-lattice $\eta^{\perp}\subset H^n(Z,\Z)$, 
and the restriction of  $\tilde{h}_{M_0}$ to  these lattices is an isometry up to the sign $(-1)^{m-1}$.
(Notice that $\tilde{h}_{M_0}$ is globally an isometry if and only if $n=2$.) 
This also shows that $\tilde{h}_{M_0}$ is $W(D_{n+3})$-equivariant.
\end{remark}


\section{Cones of curves and divisors in $G$}\label{section:conesG}
\noindent Let the setup be as in Section~\ref{relation}.
Recall that in Section~\ref{blowup} we considered the cones
$$
\Nef(X^n_{\text{\em Fano}})  \ \subset \ \Mov^1(X) \ \subset \ \Eff(X) \ \subset \  H^2(X,\R),
$$
the affine hyperplane $\ma{H}\subset H^2(X,\R)$ containing all the $E_I$'s, and the polytopes given by the intersections of these cones with $\ma{H}$:
$$
\Delta_{\text{\em Fano}} \ \subset \ \Delta_{\text{\em Mov}} \ \subset \ \Delta \ \subset \  \ma{H} \ \cong \ \R^{n+3}.
$$
From the  linear inequalities defining these polytopes  in $\R^{n+3}$, and the expression  \eqref{eq:phi} of the radial projection onto $\ma{H}$,
one can write explicitly the linear inequalities defining the cones $\Nef(X^n_{\text{\em Fano}})\cong\Nef(G)$, $\Mov^1(X)\cong\Mov^1(G)$, and 
$\Eff(X)\cong\Eff(G)$  with respect to the basis $H,E_1,\dotsc,E_{n+3}$ of $H^2(X,\R)$. 
Inequalities defining $\Mov^1(X)$ and $\Eff(X)$ were obtained in a different way in  \cite{BDP}.
In this section, we  reinterpret the facets and extremal rays of these cones in terms of special divisors and curves  in $G$.

Recall from Section~\ref{prel}  that $\ma{E}\subset H^n(Z,\R)$ is the cone over the demihypercube $\Delta$ with vertices $\{M\}_{M\in\ma{F}_{m}(Z)}$.
Its dual cone $\ma{E}^{\vee}\subset \ma{E}$ has $2(n+3)+2^{n+2}$ extremal rays, generated by the classes:
$$
\big\{M+{\sigma_i(M)} \ \big| \ M\in\ma{F}_m(Z), i\in \{1, \dotsc, n+3\}\big\} \ \cup \ 
$$
$$
\left\{  \delta_M =  \left\lfloor\frac{m+1}{2}\right\rfloor\eta+(-1)^{m}M \right\}_{M\in\ma{F}_{m}(Z)}.
$$
For a fixed  $i\in\{1,\dotsc,n+3\}$,  there are two distinct classes  $M+\sigma_i(M)$ as $M$ varies in $\ma{F}_m(Z)$,
and they form an orbit for the action of $W'$ on $H^n(Z,\Z)$. 
The stabilizer of this orbit is the subgroup 
$G_i:=\{\sigma_I\,|\,i\not\in I\text{ and }|I|\text{ is even}\}$. 
The group  $W'$ acts transitively and freely on the set $\big\{\delta_{M}\big\}_{M\in\ma{F}_{m}(Z)}$.
The facet of $\ma{E}$ corresponding to each extremal ray of $\ma{E}^{\vee}$ was described in  Remark~\ref{rem:facets}:
\begin{enumerate}[--]
\item $\big(M+\sigma_i(M)\big)^{\perp}\cap\ma{E}$ is the cone over the $(n+2)$-dimensional demihypercube with vertices 
	    $\big\{\sigma_I(M)\  \big| \ I\subset \{1, \dotsc, n+3\}\smallsetminus \{i\}, |I|\not\equiv m \text{ mod } 2 \}$.
\item $(\delta_M)^{\perp}\cap\ma{E}$ is a simplicial cone generated by the classes $\sigma_i(M)$,  $i\in\{1,\dotsc,n+3\}$. 
\end{enumerate}

Now we turn to cones of curves and divisors in $G$. 
We showed in Theorem~\ref{NE(G)} and Proposition~\ref{effdivGMov1G} that 
$$
\Nef(G) \ = \ \beta(\ma{E}^{\vee})\ \subset \ \beta(\ma{E})\ = \ \Eff(G), \text { and }
$$
$$
\Mov_1(G) = \ \alpha^{-1}(\ma{E}^{\vee})\ \subset \  \alpha^{-1}(\ma{E})\ = \ \NE(G). 
$$
We give a geometric description of the facets and extremal rays of these cones in terms of special divisors and curves  in $G$.
\begin{parg}[{$\Eff(G)$}]\label{parg:Eff(G)}
The cone $\Eff(G)$ has $2^{n+2}$ extremal rays, generated by the classes $\{ E_M\}_{M\in\ma{F}_{m}(Z)}$.
Each $E_M$ is a fixed prime divisor. 
The group  $W'\subset \Aut(G)$ acts transitively and freely on the set  $\{ E_M\}_{M\in\ma{F}_{m}(Z)}$.
In particular, all these divisors are isomorphic, and they can be described as a small modification of 
the blow-up of $\pr^{n-1}$ at $n+3$ points contained in a rational normal curve 
(see Paragraph~\ref{X-->X_Fano} for a precise description).  
\end{parg}
\begin{parg}[The divisor $E_M$ when $n=4$]
 Set $n=4$; in this case $E_M$ is isomorphic to the blow-up  of $\pr^{3}$ at $7$ points contained in a rational normal curve. To describe geometrically $E_M$ inside $G$, consider 
  the closed subset
$$\big\{[L]\in G\,|\,L\cap M\neq\emptyset\big\}.$$ 
Then this locus is not equidimensional, and
$E_M$ is its unique divisorial component.

Indeed, let us consider again the incidence diagram 
$$
\xymatrix{
&{\mathcal{I}}\ar[dl]_{\pi}\ar[dr]^e&\\
G&&Z
}
$$
as in \ref{alphabeta}, so that $\dim\mathcal{I}=5$, $\pi$ is a $\pr^1$-bundle, and
$\{[L]\in G\,|\,L\cap M\neq\emptyset\}=\pi(e^{-1}(M))$.
For the purposes of this paragraph only, it is better to denote by $[M]\in H^4(Z,\Z)$ the fundamental class of the plane $M\subset Z$.

 It is not difficult to see that $e$ is flat, so that  $e^{-1}(M)$ is equidimensional of dimension $3$, and $e^*([M])=[e^{-1}(M)]\in H^4(\mathcal{I},\Z)$. 
Then $\beta([M])=\pi_*e^*([M])=[\pi_*(e^{-1}(M))]$. By Proposition \ref{effdivGMov1G}, we have $E_M=\pi_*(e^{-1}(M))$, so that $E_M$ is the unique divisorial component of $\pi(e^{-1}(M))$.

Now let us consider the planes $M^*,\sigma_1(M)^*,\dotsc,\sigma_7(M)^*\subset G$ (see \eqref{M*}); they are all contained in  $\pi(e^{-1}(M))$.

Let $i\in\{1,\dotsc,7\}$. Recall that $\ell_{\sigma_i(M)}\subset \sigma_i(M)^*$ is a line, and that $\ell_{\sigma_i(M)}=\alpha(\sigma_i(M))$. By Proposition \ref{iso}, using for instance \eqref{M_I}, 
we have 
$$E_M\cdot\ell_{\sigma_i(M)}=M\cdot\sigma_i(M)=-1,$$ 
so that  $\sigma_i(M)^*\subset E_M$. On the other hand $E_M$ contains only $7$ planes $(M')^*$ (see \ref{dim4}), therefore $M^*$ cannot be contained in $E_M$.
This shows that $M^*$ is a $2$-dimensional irreducible component of  $\pi(e^{-1}(M))$.
\end{parg}
\begin{parg}[{$\NE(G)$}]\label{parg:NE(G)}
The cone $\NE(G)$ has $2^{n+2}$ extremal rays, generated by the classes  $\{ \ell_M\}_{M\in\ma{F}_{m}(Z)}$,
on which $W'\subset \Aut(G)$ acts transitively. 
The contraction of the extremal ray generated by  $\ell_M$ contracts  $M^*\cong\pr^m$  to a point.

Fix $M\in\ma{F}_{m}(Z)$ and consider the pseudo-isomorphism  $\xi_M\colon G\dasharrow X$ from Theorem~\ref{SQM}.  
This fixes an identification of $G$ with $X^n_{\text{\em Fano}}$ which identifies each divisor $E_{\sigma_I(M)}\subset G$ with the strict transform 
of the divisor $E_I\subset X$.
Let $I\subset \{1, \dotsc, n+3\}$ be such that $|I|\leq m+1$.
It follows from the discussion in Paragraph~\ref{X-->X_Fano} that 
\begin{enumerate}[--]
\item If $|I|\not\equiv m \text{ mod } 2 $, then $\big(\sigma_I(M)\big)^*\subset G$ is the strict transform of $J_{I,s}\subset X$,
	    where $s=\frac{m+1-|I|}{2}$. 
\item If $|I|\equiv m \text{ mod } 2 $, then $\big(\sigma_I(M)\big)^*\subset G$ is the flipped locus of the flip of the 
	    strict transform of $J_{I,s}\subset X$, where $s=\frac{m-|I|}{2}$. 
\end{enumerate}
In particular, we see that $(M')^*\subset E_M$ if and only if $M'=\sigma_I(M)$ for some $I\subset \{1, \dotsc, n+3\}$ 
with $|I|\leq m-1$ and $|I|\not\equiv m \text{ mod } 2 $.
\end{parg}
\begin{parg}[{$\Nef(G)$}]\label{parg:Nef(G)}
The cone $\Nef(G)$ has  $2^{n+2}+2(n+3)$ extremal rays, generated by the 
classes 
$$
\big\{D_M=\beta(\delta_M)\big\}_{M\in\ma{F}_{m}(Z)} \ \cup 
\big\{E_M+E_{\sigma_i(M)}\,\big|\,M\in\ma{F}_m(Z), \ i=1,\dotsc,n+3\big\}.
$$
For fixed $i$, the morphisms associated to the extremal rays generated by $E_M+E_{\sigma_i(M)}$ and $E_{\sigma_j(M)}+E_{\sigma_{ij}(M)}$ ($j\neq i$) are
 the generic 
$\pr^1$-bundles $\varphi_i\colon G\to Y_{\varphi_i}$ and $\psi_i\colon G\to Y_{\psi_i}$ described in Lemma~\ref{fibertype}.
The morphism associated to the extremal ray generated by $D_M$ is the composition of the (disjoint) small
contractions of $\sigma_i(M)^*\subset G$ to a point, $i=1,\dotsc,n+3$. 
\end{parg}
\begin{parg}[{$\Mov_1(G)$}]\label{parg:Mov_1(G)}\label{Y_ph_i}
The cone $\Mov_1(G)$ has  $2(n+3)+2^{n+2}$ extremal rays, generated by the curve classes:
$$
\left\{\ell_M+\ell_{\sigma_i(M)}\,|\,M\in\ma{F}_{m}(Z),i=1,\dotsc,n+3\right\}\cup\left\{d_M\,|\,M\in \ma{F}_{m}(Z)\right\},
$$
where 
$$
d_M:=\alpha^{-1}(\delta_M)=\left\lfloor\frac{m+1}{2}\right\rfloor\alpha^{-1}(\eta)+(-1)^m\ell_M\in\N(G).
$$

For a fixed  $i\in\{1,\dotsc,n+3\}$,  there are two distinct classes  $\ell_M+\ell_{\sigma_i(M)}$ as $M$ varies in $\ma{F}_m(Z)$,
and they form an orbit for the action of $W'$ on $\N(G)$.
By Corollary~\ref{extremalfaces}, these are the classes of the fibers of the generic $\pr^1$-bundles $\varphi_i\colon G\to Y_{\varphi_i}$
and $\psi_i\colon G\to Y_{\psi_i}$.
Under the identification $G\cong X^n_{\text{\em Fano}}$ induced by a pseudo-isomorphism $G\dasharrow X$, 
these correspond to the the generic $\pr^1$-bundles $\phi_i, \phi_i'\colon  X^{n}_{\text{\em Fano}}\to (X_{\ma{P}_i})^{n-1}_{\text{\em Fano}}$
described in Lemma~\ref{image_phi}.
In particular, we see that $Y_{\varphi_i}\cong Y_{\psi_i}\cong (X_{\ma{P}_i})^{n-1}_{\text{\em Fano}}$.

As for the class $d_M$, using Proposition~\ref{iso} and Remark~\ref{rem:facets}, one computes:
$$
-K_G\cdot d_M=\eta\cdot \delta_M=n+1, \ \text{ and }
$$
 $$
 E_{\sigma_i(M)} \cdot d_M  =\sigma_i(M)\cdot\delta_M=0\text{ for every }i=1,\dotsc,n+3.
 $$
 Therefore $d_M$ is the class of the strict transform in $G$ of a general line in $\pr^n$ under the map $\rho_M\colon G\dasharrow \pr^n$. 
\end{parg}

In order to complete the picture, next we describe equations for the movable cone $\Mov^1(G)  \subset  H^2(G,\R)$ 
and give a geometric description of the extremal rays of the dual cone $\Mov^1(G)^{\vee}  \subset  \N(G)$.
We do this for $n\geq 4$, since when $n=2$ we have $\Mov^1(G)=\Nef(G)$ and $\Mov^1(G)^{\vee}=\NE(G)$.
\begin{proposition}\label{movG}
Suppose that $n\geq 4$. The cone $\Mov^1(G)^{\vee}\subset\N(G)$ has $2^{n+2}+2(n+3)$ extremal rays, generated by the classes 
$$
\left\{e_M\,|\,M\in\ma{F}_m(Z)\right\}\cup \left\{\ell_M+\ell_{\sigma_i(M)}\,|\,M\in\ma{F}_m(Z),i=1,\dotsc,n+3\right\},
$$
where $e_M:=\left\lfloor\frac{m}{2}\right\rfloor\alpha^{-1}(\eta)+(-1)^{m-1}\ell_M$.
\end{proposition}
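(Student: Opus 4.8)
The plan is to transport everything to $H^n(Z,\R)$ through the isomorphisms $\alpha,\beta$, exactly as in the proofs of Theorem~\ref{NE(G)} and Proposition~\ref{effdivGMov1G}. A pseudo-isomorphism is an isomorphism in codimension one, so it preserves the movable cone of divisors; hence the map $\xi_{M_0}^*=h_{M_0}$ of Theorem~\ref{SQM} carries $\Mov^1(X)$ isomorphically onto $\Mov^1(G)$, and the cone
$$
\ma{W}\ :=\ \beta^{-1}\big(\Mov^1(G)\big)\ =\ \tilde h_{M_0}\big(\Mov^1(X)\big)\ \subset\ H^n(Z,\R)
$$
is intrinsic (independent of the auxiliary choice of $M_0$). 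By the duality $x\cdot\beta(y)=\alpha(x)\cdot y$ of Proposition~\ref{iso}, a class $\gamma$ lies in $\Mov^1(G)^\vee$ if and only if $\alpha(\gamma)\cdot w\ge0$ for all $w\in\ma{W}$; that is, $\Mov^1(G)^\vee=\alpha^{-1}\big(\ma{W}^\vee\big)$, where $\ma{W}^\vee$ is the dual of $\ma{W}$ with respect to the intersection form. Since $\alpha^{-1}(M)=\ell_M$ and $e_M=\alpha^{-1}\big(\lfloor m/2\rfloor\eta+(-1)^{m-1}M\big)$, it is enough to show that $\ma{W}^\vee$ is generated by the classes $M+\sigma_i(M)$ and $\gamma_M:=\lfloor m/2\rfloor\eta+(-1)^{m-1}M$.

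I would first write $\ma{W}$ explicitly. By the isometry computation in the final remark of Section~\ref{relation}, $\tilde h_{M_0}$ sends $-K_X\mapsto\eta$ and $\tilde\eps_i\mapsto\eps_i$; it therefore identifies the affine chart $\ma{H}$ and its coordinates $(\alpha_1,\dots,\alpha_{n+3})$ with the chart $\{\gamma\cdot\eta=1\}\subset H^n(Z,\R)$ and the coordinates induced by $(\tfrac14\eta,\eps_1,\dots,\eps_{n+3})$. Hence $\ma{W}$ is the cone over $\Delta_{\text{\em Mov}}$. Using the coordinates $(y,x_1,\dots,x_{n+3})$ attached to $(\eta,\eps_1,\dots,\eps_{n+3})$ and the radial projection $\alpha_i=\frac{x_i}{4y}$, the inequalities $-\tfrac12\le\alpha_i\le\tfrac12$ and $H_I\ge2$ ($|I|$ odd) defining $\Delta_{\text{\em Mov}}$ translate into
$$
\ma{W}\ =\ \left\{
\begin{aligned}
& 2y+x_i\ge0,\ \ 2y-x_i\ge0, & i\in\{1,\dots,n+3\},\\
& 2(n-1)y+\sum_{j\not\in I}x_j-\sum_{i\in I}x_i\ge0, & |I|\ \text{odd.}
\end{aligned}
\right.
$$
This is the exact analogue of \eqref{eq:E}, with $2(n+1)$ replaced by $2(n-1)$ and even subsets replaced by odd ones, obtained by the same substitution that produced \eqref{eq:E} from \eqref{eq:Delta}.

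Next I would dualize. As $\ma{W}$ is an intersection of the above half-spaces, $\ma{W}^\vee$ is generated by the vectors representing these functionals under the intersection form, for which $\eta^2=4$ and $\eps_i^2=(-1)^m$ orthogonally. The functionals $2y\pm x_i$ are represented by $\tfrac12\eta\pm(-1)^m\eps_i$, which by \eqref{eta} and \eqref{eps} equal $M_0+\sigma_i(M_0)$ and $M_j+\sigma_i(M_j)$ (for any $j\ne i$); as $M$ varies these are the two $W'$-translates of the form $M+\sigma_i(M)$, giving $2(n+3)$ rays. The functional of the third type, for $|I|$ odd, is represented by
$$
u_I\ =\ \tfrac{n-1}{2}\,\eta+(-1)^m\Big(\sum_{j\not\in I}\eps_j-\sum_{i\in I}\eps_i\Big).
$$
Substituting \eqref{M_I} (for $|I|$ odd one has $(-1)^{|I|}=-1$, whence $\sum_{j\not\in I}\eps_j-\sum_{i\in I}\eps_i=\tfrac12\eta-2M_I$) and using $\tfrac{n-1+(-1)^m}{2}=2\lfloor m/2\rfloor$ for $n=2m$, a short computation gives $u_I=2\big(\lfloor m/2\rfloor\eta+(-1)^{m-1}M_I\big)=2\gamma_{M_I}$, with no case distinction on the parity of $m$. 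Since $I\mapsto M_I$ is a bijection from odd subsets onto $\ma{F}_m(Z)$, these yield the $2^{n+2}$ rays $\gamma_M$. Applying $\alpha^{-1}$ turns $M+\sigma_i(M)$ into $\ell_M+\ell_{\sigma_i(M)}$ and $\gamma_M$ into $e_M$, which are exactly the claimed generators.

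The remaining, and main, point is to show that no listed ray is redundant, i.e. that the inequality description of $\ma{W}$ is irredundant and accounts for all facets. I would read this off from the Mukai--Bauer analysis recalled in Section~\ref{blowup}: the description of the walls on $\partial\Mov(X)$ there shows that the facets of $\Mov^1(X)$ are supported \emph{precisely} on the $2(n+3)$ hyperplanes $(\alpha_i=\pm\tfrac12)$, carrying the fiber-type contractions $\phi_i,\phi'_i$, and on the $2^{n+2}$ hyperplanes $(H_I=2)$ with $|I|$ odd, carrying the divisorial contractions. Consequently $\ma{W}$ has exactly $2(n+3)+2^{n+2}$ facets, the functionals above are pairwise non-proportional and irredundant, and hence they generate the extremal rays of $\ma{W}^\vee$; the hypothesis $n\ge4$ ensures $\Mov^1(G)\ne\Nef(G)$, so that this facet count is the correct one. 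Establishing this irredundancy cleanly is the crux, the rest being the bookkeeping carried out above.
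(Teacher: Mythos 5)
Your proposal is correct and follows essentially the same route as the paper's proof: both identify $\Mov^1(G)$ with (the image under $\beta$ of) the cone over $\Delta_{\text{\em Mov}}$ in $H^n(Z,\R)$ via the pseudo-isomorphism and the isometry $\tilde h_{M_0}$, dualize the defining linear inequalities with respect to the intersection form exactly as done for $\ma{E}$ in Section~\ref{prel} (your computation $u_I=2\big(\lfloor m/2\rfloor\eta+(-1)^{m-1}M_I\big)$ matches the paper's $\eta_M$), and then transport the generators through $\alpha^{-1}$ using the duality of Proposition~\ref{iso}. Your justification of irredundancy via the Mukai--Bauer wall description of $\partial\Mov(X)$ is a somewhat more explicit version of the point the paper settles with its observation about the facet $(H_I=2)\cap\Delta_{\text{\em Mov}}$ and the identity $\eta_M\cdot\sigma_{ij}(M)=0$, but it is the same argument in substance.
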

\begin{proof}
Recall from Section~\ref{blowup} that the intersection of  $\Mov^1(X)$ with the affine hyperplane $\ma{H}\subset H^2(X,\R)$
is given by:
$$
\Delta_{\text{\em Mov}} \ =  \  \left\{ 
		\begin{aligned}
		& -\frac{1}{2}\leq \alpha_i \leq \frac{1}{2}, \ & i\in \{1, \dots, n+3\} \\
		&H_I\geq 2 , \ & |I| \text{ odd.}
		\end{aligned}
		\right.
$$
So $\Mov^1(G)=\beta\big(\ma{M}\big)$, where $\ma{M}$ is the cone over $\Delta_{\text{\em Mov}}$, now viewed as a polytope in the hyperplane
$\{\gamma\,|\,\gamma\cdot\eta=1\}\subset H^n(Z,\R)$.

Notice that the facet $(H_I=2)\cap \Delta_{\text{\em Mov}}$ of $\Delta_{\text{\em Mov}}$ is the convex hull of the vertices $v_J$ such that 
$ \# (I\smallsetminus J) + \# (J\smallsetminus  I) =2$. This follows from \eqref{H_I(v_J)}.
In the same way done in Section~\ref{prel} for $\ma{E}$, one can use the linear inequalities defining $\Delta_{\text{\em Mov}}$ to compute the linear
inequalities defining $\ma{M}$, or equivalently the generators of the dual cone $\ma{M}^{\vee}$. These are:
$$
\big\{M+{\sigma_i(M)} \ \big| \ M\in\ma{F}_m(Z), i\in \{1, \dotsc, n+3\}\big\} \ \cup \ 
\left\{  \eta_M  \right\}_{M\in\ma{F}_{m}(Z)},
$$
where $\eta_M=\left\lfloor\frac{m}{2}\right\rfloor \eta+(-1)^{m-1}M$ (notice that $e_M=\alpha(\eta_M)$).
Indeed, one can check using \eqref{M_I} that
 \stepcounter{thm}
\begin{equation} \label{eta_M}
\eta_M \cdot \sigma_{ij}(M) \ = \ 0 \ \ \forall i\neq j. 
\end{equation}
By the duality properties of $\alpha$ and $\beta$, we have $\Mov^1(G)^{\vee}=\alpha^{-1}(\ma{M}^{\vee})$,
and the result follows. 
\end{proof}
\begin{parg}\label{parg:Mov^1(G)}
The classes $\ell_M+\ell_{\sigma_i(M)}$ were described in Paragraph~\ref{Y_ph_i} above.
Now we want to describe the classes $e_{M}$.

Given $M\in\ma{F}_{m}(Z)$ and $i\in\{1, \ldots, n+3\}$, set $M_0=\sigma_i(M)$, and follow the notation introduced in 
\ref{M_0}, so that $M=M_i$.
Consider the pseudo-isomorphism  $\xi_{M_0}\colon G\dasharrow X$ from Theorem~\ref{SQM},  
and note that the divisor $E_M\subset G$ is the strict transform  of the divisor $E_i\subset X$ under $\xi_{M_0}$.
By \eqref{eta_M} above, we have that 
$$
E_{M_j}\cdot e_M \ = \ 0 \ \ \forall j\neq i. 
$$
Similarly one computes that $E_{M}\cdot e_M=-1$. 
We conclude that $e_{M}$ is the class of the strict transform under $\xi_{M_0}^{-1}$ of a general line in $E_i\cong\pr^{n-1}$.
\end{parg}
\begin{remark}
Set $c:=\alpha^{-1}(\eta)\in\N(G)$. We have:
$$
-K_G\cdot c=4\quad\text{and}\quad E_M\cdot c=1\text{ for every $M\in\ma{F}_m(Z)$.}
$$ 
The class $c$ is fixed by the action of $W(D_{n+3})$ and sits in the interior of the cone $\Mov_1(G)\subset\NE(G)$. 
Let  $M\in\ma{F}_m(Z)$ and consider the rational map $\rho_{M}\colon G\dasharrow \pr^n$ from Theorem~\ref{SQM}. 
Then $c$ is the class of the strict transform via $\rho_{M}^{-1}$ of an elliptic curve of degree $n+1$  in $\pr^n$ through $p_1,\dotsc,p_{n+3}$. 
There is a $4$-dimensional family of such curves (see \cite{dolgannali}). 
\end{remark}
\begin{remark}
In \cite{BDP}, the effective cone $\Eff^1(X)\subset H^2(X,\R)$ is described by 3 sets 
of linear inequalities ($A_n$), ($B_n$) and ($C_{n,t}$).
Similarly, the movable cone $\Mov^1(X)\subset H^2(X,\R)$ is described by 3 sets 
of linear inequalities ($A_n$), ($B_n$) and ($D_{n,t}$) (see \cite[Theorems~5.1 and 5.3]{BDP}).
These are related to the extremal rays of $\Mov_1(G)$ and $\Mov^1(G)^{\vee}$  described in Paragraphs~\ref{parg:Mov_1(G)} 
and \ref{parg:Mov^1(G)} as follows.
A divisor class $D\in H^2(G,\R)$ satisfies the inequalities ($A_n$) and ($B_n$)  if and only if:
  $$
  D\cdot(\ell_M+\ell_{\sigma_i(M)}) \ \geq \ 0 \quad\text{ for every } M\in\ma{F}_m(Z)\text{ and } i=1,\dotsc,n+3.
  $$
It satisfies the inequalities ($C_{n,t}$)  if and only if:
 $$
 D\cdot d_{M} \ \geq 0 \ \quad\text{for every } M\in\ma{F}_m(Z).
 $$
  Finally, it  satisfies the inequalities ($D_{n,t}$) if and only if:
  $$
  D\cdot e_{M}\ \geq 0 \ \quad\text{for every } M\in\ma{F}_m(Z).
  $$
\end{remark}
\begin{parg}[$\MCD(G)$]
Consider the subdivision in polytopes of the demihypercube $\Delta\subset\ma{H}\subset H^n(Z,\R)$ given by the hyperplane arrangement \eqref{eq:MCD_Delta}. By taking the cones over these polytopes, and using  
the isomorphism $\beta\colon H^n(Z,\R)\to H^2(G,\R)$, this subdivision yields the fan $\MCD(G)$.

Fix $M_0\in\ma{F}_m(Z)$ and consider the orthogonal basis $\eps_1,\dotsc,\eps_{n+3}$ of $\eta^{\perp}\subset H^n(Z,\R)$ introduced in \eqref{eps}, 
and the affine coordinates $\alpha_1,\dotsc,\alpha_{n+3}$ in the hyperplane $\ma{H}:=\{\gamma\,|\,\gamma\cdot\eta=1\}$ described on page \pageref{alpha}. 
The group $W'$ fixes $\ma{H}$ and $\eta$, thus it acts linearly in the coordinates $\alpha_i$. More precisely it follows from
 \eqref{standardaction} that, if $I\subset\{1,\dotsc,n+3\}$ has even cardinality, then
$\sigma_I(\alpha_1,\dotsc,\alpha_{n+3})=(\alpha'_1,\dotsc,\alpha'_{n+3})$ with
$$
\alpha_i'=\begin{cases}\alpha_i&\text{if }i\not\in I,\\-\alpha_i&\text{if }i\in I.\end{cases}
$$
The group $W'$ fixes both $\Delta$ and $\Delta_{\text{\em Mov}}$, while  the $2^{n+2}$ polytopes $\sigma_I(\Delta_{\text{\em Nef}})$ are all distinct. The corresponding cones in $\MCD(G)$ are $\xi_{M_I}^*(\Nef(X))=\sigma_I^*(\xi_{M_0}^*(\Nef(X))$.
\end{parg}


\section{The automorphism group of $G$}\label{automorphisms}
\noindent Let the setup be as in Section~\ref{relation}.
In this section we describe the automorphism group of the Fano variety $G$, generalizing
the description of the automorphism group of a quartic del Pezzo surface in Example~\ref{surfaces}.
\begin{proposition}\label{Aut(G)}
There are inclusion of groups:
$$
(\Z/2\Z)^{n+2}\cong W' \subseteq \Aut(G)\subseteq W(D_{n+3})\cong (\Z/2\Z)^{n+2} \rtimes S_{n+3}.
$$

Moreover, if the points  $(\lambda_1:1),\dotsc,(\lambda_{n+3}:1)\in \pr^1$ are general,  then $\Aut(G)=W'\cong(\Z/2\Z)^{n+2}$.
\end{proposition}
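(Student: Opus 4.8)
The plan is to embed $\Aut(G)$ into $W(D_{n+3})$ via its action on $H^2(G,\Z)$, transported to $H^n(Z,\R)$ by $\beta$, and then to use the explicit birational maps $\rho_M\colon G\dasharrow\pr^n$ of Theorem~\ref{SQM} to convert the resulting lattice automorphisms into projective symmetries of the point configuration. The inclusion $W'\subseteq\Aut(G)$ was already established in Section~\ref{Fano}. For the other inclusion, given $\zeta\in\Aut(G)$ I would consider the induced covariant automorphism $\zeta_*$ of $H^2(G,\Z)$ and set $f:=\beta^{-1}\circ\zeta_*\circ\beta\colon H^n(Z,\R)\to H^n(Z,\R)$. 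Since $\zeta_*$ fixes $-K_G=\beta(\eta)$ and preserves $\Nef(G)=\beta(\ma{E}^{\vee})$ by Theorem~\ref{NE(G)}, the map $f$ satisfies $f(\eta)=\eta$ and $f(\ma{E}^{\vee})=\ma{E}^{\vee}$; by the implication $(ii)\Rightarrow(iii)$ of Lemma~\ref{linear} this forces $f\in W(D_{n+3})$. This produces a group homomorphism $\Aut(G)\to W(D_{n+3})$.

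To see it is injective, suppose $\zeta_*=\Id$. Fix $M_0\in\ma{F}_m(Z)$ and the map $\rho_{M_0}\colon G\dasharrow\pr^n$ of Theorem~\ref{SQM}, with $H_{M_0}=\rho_{M_0}^*\ma{O}_{\pr^n}(1)$ and exceptional divisors $E_{M_i}$ contracted to $p_i$. Because $\zeta_*$ fixes $H_{M_0}$, the birational self-map $g:=\rho_{M_0}\circ\zeta\circ\rho_{M_0}^{-1}$ of $\pr^n$ preserves $\ma{O}(1)$, hence lies in $\operatorname{PGL}_{n+1}$; because each $E_{M_i}$ is the unique effective divisor in its class (Proposition~\ref{effdivGMov1G}), $\zeta$ fixes each $E_{M_i}$, so $g$ fixes each $p_i$. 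Since $n+3$ of the $p_i$ are in general linear position, $g$ fixes a projective frame and is therefore the identity, whence $\zeta=\Id$.

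For the second assertion, I would use $W(D_{n+3})=W'\rtimes G_0$ with $G_0\cong S_{n+3}$ the stabilizer of $M_0$. After composing $\zeta$ with a suitable element of $W'\subseteq\Aut(G)$, I may assume its image is some $\omega\in G_0$, inducing a permutation $\kappa$ of $\{1,\dotsc,n+3\}$ with $\omega(M_0)=M_0$ and $\omega(M_i)=M_{\kappa(i)}$. Then $\zeta_*$ fixes $E_{M_0}$ and sends $E_{M_i}$ to $E_{M_{\kappa(i)}}$, and as before $\zeta_*H_{M_0}=H_{M_0}$ gives $g:=\rho_{M_0}\circ\zeta\circ\rho_{M_0}^{-1}\in\operatorname{PGL}_{n+1}$ with $g(p_i)=p_{\kappa(i)}$. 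Since these points lie on the unique rational normal curve $C$ through them, $g(C)$ is again a rational normal curve through the same $n+3$ points, so $g(C)=C$; thus $g|_C\in\Aut(C)\cong\operatorname{PGL}_2$ realizes $\kappa$ as a projective symmetry of the unordered configuration $\{(\lambda_1:1),\dotsc,(\lambda_{n+3}:1)\}\subset\pr^1$.

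Finally, for general $(\lambda_i:1)$ the unordered set of $n+3\geq 5$ points has trivial stabilizer in $\operatorname{PGL}_2$, so $\kappa=\Id$, hence $\omega=\Id$ and the original $\zeta$ lies in $W'$; together with $W'\subseteq\Aut(G)$ this yields $\Aut(G)=W'$. I expect the main obstacle to be the third step, namely passing from the combinatorial permutation $\kappa\in S_{n+3}$ to a genuine automorphism of $\pr^n$ preserving $C$. This is precisely where the explicit description of $\rho_{M_0}$ and of the class $H_{M_0}$ in Theorem~\ref{SQM}, the identification of the exceptional divisors $E_{M_i}$ with the points $p_i$, and the uniqueness of each $E_M$ in its linear class (Proposition~\ref{effdivGMov1G}) are indispensable, since they upgrade a purely lattice-theoretic statement into the rigid projective-geometric one needed to invoke genericity of the $\lambda_i$.
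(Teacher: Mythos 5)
Your proposal is correct and follows essentially the same route as the paper: the embedding $\Aut(G)\hookrightarrow W(D_{n+3})$ via the action on $H^2(G,\Z)$ transported by $\beta$ and Lemma~\ref{linear} (the paper invokes the argument of Proposition~\ref{pullback}), injectivity via $\rho_{M_0}$ and the general linear position of the $p_i$, and the general case via the decomposition $W(D_{n+3})=W'\rtimes G_0$ after twisting $\zeta$ by an element of $W'$. Your only addition is to make explicit, by restricting the projectivity $g$ to the unique rational normal curve $C$ and landing in $\operatorname{PGL}_2$, why a projective transformation permuting the $p_i$ must be trivial for general $\lambda_i$ --- a step the paper asserts without this (correct and natural) justification.
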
 
\noindent Notice that in the general case we also have $\Aut(Z)=W'$ \cite[Lemma 3.1]{reidthesis}, so that $Z$ and $G$ have the same automorphism group.
\begin{proof}
Clearly we have $W' \subseteq \Aut(G)$.

For any automorphism $\zeta\in\Aut(G)$, the induced isomorphism  $\zeta^*\colon H^2(G,\R)\to H^2(G,\R)$ preserves $-K_G$ and $\Eff(G)$. 
As in the proof of Proposition~\ref{pullback}, one shows that  $\zeta^*\in W(D_{n+3})$.
This yields a group  homomorphism
 $$
 \Aut(G)\la W(D_{n+3}).
 $$

 Fix $M_0\in\ma{F}_m(Z)$. 
 Consider the stabilizer $G_0$ of $M_0$ in $W(D_{n+3})$, and recall that $W(D_{n+3})=W'\rtimes G_0\cong (\Z/2\Z)^{n+2} \rtimes S_{n+3}$. 
 So, given $\zeta\in\Aut(G)$, there are unique elements $\omega\in G_0$ and $\sigma_I\in W'$ such that $\zeta^*=\omega\circ\sigma_I$.
 Set $\tilde{\zeta}:=\sigma_I\circ\zeta\in\Aut(G)$. Then $\tilde{\zeta}^*=\zeta^*\circ\sigma_I=\omega$, so 
 $\tilde{\zeta}^*$ fixes $E_{M_0}$, and hence it also fixes $H_{M_0}$.
 
Consider the rational map $\rho_{M_0}\colon G\dasharrow\pr^n$ induced by $H_{M_0}$, which contracts the divisors 
$E_{M_1}, \dots, E_{M_{n+3}}$ to the points $p_1, \dots, p_{n+3}$ (see Theorem~\ref{SQM}). 
Then $\tilde{\zeta}^*(\rho_{M_0}^*(\ma{O}_{\pr^n}(1)))=\rho_{M_0}^*(\ma{O}_{\pr^n}(1))=H_{M_0}$, 
so $\rho_{M_0}$ and $\rho_{M_0}\circ\w{\zeta}$ differ by a projective transformation $f\in\Aut(\pr^n)$ preserving the set of points $\{p_1, \dots, p_{n+3}\}$:
$$
\xymatrix{
G\ar[r]^{\tilde{\zeta}}\ar@{-->}[d]_{\rho_{M_0}}&G\ar@{-->}[d]^{\rho_{M_0}}\\
{\pr^n}\ar[r]^{f}&{\pr^n.}
}
$$
In particular, if the points $p_1, \dots, p_{n+3}$ are general, then  $f=\Id_{\pr^n}$,  and  so $\zeta=\sigma_I$. 

Suppose that  $\zeta^*=\Id_{H^2(G,\R)}$.  Then $\tilde{\zeta}=\zeta$ and $f$ must fix each $p_i$.
Since $p_1,\dotsc,p_{n+3}$ are  in  general linear position, this implies that $f=\Id_{\pr^n}$,  and hence $\zeta=\tilde{\zeta}=\Id_G$. 
This shows  that the homomorphism $\Aut(G)\to W(D_{n+3})$ is injective, yielding the statement.
 \end{proof}
Every automorphism of $X$ is induced by a projective transformation of $\pr^n$ preserving the set $\{p_1,\dotsc,p_{n+3}\}$.
This in turns corresponds to a projective transformation of $\pr^1$ preserving the set of points $\{(\lambda_1:1),\dotsc,(\lambda_{n+3}:1)\}\subset\pr^1$.
In particular, if  $\lambda_1,\dotsc,\lambda_{n+3}$ are general, then $\Aut(X)=\{\Id_X\}$.

For any projective variety $Y$, we denote by $\Bir^0(Y)$ the group of \emph{pseudo-automorphisms} of $Y$. 
These are  birational maps $Y\dasharrow Y$ which are isomorphisms in codimension one. 

Since $X$ and $G$ are pseudo-isomorphic, we have $\Bir^0(X)\cong\Bir^0(G)$. 
On the other hand, since $G$ is a Fano manifold, we have $\Bir^0(G)=\Aut(G)$.
Indeed if $\zeta\in\Bir^0(G)$, then $\zeta^*(-K_G)=-K_G$. 
Since $\zeta$ is an isomorphism in codimension one, and $-K_G$ is ample, $\zeta$ must be regular, and similarly for $\zeta^{-1}$. 
\begin{remark}[Explicit description of pseudo-automorphisms of $X$]\label{rem_Dol}
The action of $W'$ on $X$ by pseudo-automorphisms is described by Dolgachev in \cite[\S4.4 - 4.6]{dolgannali}. 
Up to a projective transformation, we may assume that $p_1,\dotsc,p_{n+1}$ are the coordinate points, 
$p_{n+2}=(1:\cdots:1)$, and $p_{n+3}=(a_0:\cdots:a_{n+3})$. 
Since no $n+1$ of the points lie on a hyperplane, all the $a_j$'s are nonzero.

Consider the standard Cremona map centered at $p_1,\dotsc,p_{n+1}$:
$$
s: \ \left(z_0:\cdots:z_n\right)\ \mapsto \ \left(\frac{1}{z_0}:\cdots:\frac{1}{z_n}\right).
$$ 
It is regular at $p_{n+2}$ and $p_{n+3}$, which are mapped respectively to itself and to $(\frac{1}{a_0}:\cdots:\frac{1}{a_n})$. 
The  projective trasformation 
$$
r: \left(z_0:\cdots:z_n\right)\ \mapsto \ \left(a_0z_0:\cdots:a_nz_n\right)
$$ 
fixes $p_1,\dotsc,p_{n+1}$, maps $p_{n+2}$ to $p_{n+3}$, and maps  $(\frac{1}{a_0}:\cdots:\frac{1}{a_n})$ to $p_{n+2}$. 
So the composition 
$$
f_{n+2,n+3}=r\circ s \colon \pr^n\dasharrow\pr^n
$$
induces a pseudo-automorphism $\omega_{n+2,n+3}\colon X\dasharrow X$. 

Similarly, for every $i,j\in\{1,\dotsc,n+3\}$ with $i<j$, we can define a birational involution $f_{ij}\colon\pr^n\dasharrow \pr^n$, 
which is not regular only at $\{p_1,\dotsc,p_{n+3}\}\smallsetminus\{p_i,p_j\}$, and exchanges $p_i$ and $p_j$. 
This induces a pseudo-automorphism $\omega_{ij}\colon X\dasharrow X$.

One can check that $\omega_{ij}^*$ acts on $H^2(X,\Z)$ as follows:
\begin{align*}
\omega_{ij}^*(-K_X)&=-K_X,\quad
\omega_{ij}^*(E_i)=E_j,\quad
\omega_{ij}^*(E_j)=E_i
\\
\omega_{ij}^*(H)&=nH-(n-1)\left(\sum_{h=1}^{n+1}E_h-E_i-E_j\right) \\
\omega_{ij}^*(E_r)&=H-\sum_{h=1}^{n+3}E_h+E_i+E_j+E_r\\
&=\frac{1}{n+1}(-K_X)-\frac{2}{n+1}\sum_{h=1}^{n+3}E_h+E_i+E_j+E_r
\quad \text{ for }r\neq i,j.
\end{align*}

Consider  the isomorphism $\tilde{h}_{M_0}\colon H^2(X,\R)\to H^n(Z,\R)$ defined in \eqref{def_h_M_tilde}, 
and the corresponding action of $\omega_{ij}^*$ on $H^n(Z,\R)$. 
We have:  
$$
\omega_{ij}^*(\eta)=\eta \ \text{ and } \ \omega_{ij}^*(\eps_r)=
\begin{cases}
-\eps_r\text{ if }r=i,j\\
\eps_r\text{ if } r\neq i,j.
\end{cases}
$$
(The latter can be checked using  \eqref{eps_i}.)
Hence  $\omega_{ij}^*=\sigma_{ij}$ and $\omega_{ij}$ is the pseudo-automorphism of $X$ induced by $\sigma_{ij}\in W'$. 
In particular,  the pseudo-automorphism of $X$ induced by $\sigma_1\in W'$ is $\omega_{23}\omega_{45}\cdots\omega_{n+2,n+3}$, and so on.
\end{remark}

\providecommand{\noop}[1]{}
\providecommand{\bysame}{\leavevmode\hbox to3em{\hrulefill}\thinspace}
\providecommand{\MR}{\relax\ifhmode\unskip\space\fi MR }
\providecommand{\MRhref}[2]{%
  \href{http://www.ams.org/mathscinet-getitem?mr=#1}{#2}
}
\providecommand{\href}[2]{#2}

\end{document}